\crefname{equation}{}{}
\DeclareSymbolFont{symbolsC}{U}{pxsyc}{m}{n}
\DeclareMathSymbol{\medcircle}{\mathbin}{symbolsC}{7}
\crefname{algocf}{Algorithm}{Algorithms}
\crefname{equation}{}{} 
\colorlet{refkey}{orange!20}
\colorlet{labelkey}{blue!30}
\crefname{algocf}{Algorithm}{Algorithms}
\numberwithin{equation}{section}
\newtheorem{theorem}{Theorem}[section]
\newtheorem{proposition}[theorem]{Proposition}
\newtheorem{lemma}[theorem]{Lemma}
\newtheorem{claim}[theorem]{Claim}
\crefname{claim}{Claim}{Claims}
\newtheorem{corollary}[theorem]{Corollary}
\newtheorem*{question*}{Question}
\newtheorem{fact}[theorem]{Fact}
\theoremstyle{definition}
\newtheorem{definition}[theorem]{Definition}
\newtheorem*{definition*}{Definition}
\theoremstyle{remark}
\newtheorem{remark}[theorem]{Remark}
\newtheorem*{remark*}{Remark}
\newcommand{\norm}[1]{\bigg\lVert#1\bigg\rVert}
\newcommand{\snorm}[1]{\lVert#1\rVert}
\newcommand{\supp}{\operatorname{supp}}
\newcommand{\mbf}{\mathbf}
\newcommand{\mbm}{\mathbbm}
\newcommand{\mc}{\mathcal}
\newcommand{\mf}{\mathfrak}
\newcommand{\mr}{\mathrm}
\newcommand{\ol}{\overline}
\newcommand{\on}{\operatorname}
\newcommand{\wh}{\widehat}
\newcommand{\wt}{\widetilde}
\newcommand{\poly}{\on{poly}}
\newcommand{\eps}{\varepsilon}
\title{Effective bounds for Roth's theorem with shifted square common difference}
\author[A1]{Sarah Peluse}
\address{Department of Mathematics, University of Michigan, East Hall, 530 Church Street, Ann Arbor, MI 48109, USA}
\email{speluse@umich.edu}
\author[A2]{Ashwin Sah}
\author[A3]{Mehtaab Sawhney}
\address{Department of Mathematics, Massachusetts Institute of Technology, 77 Massachusetts Avenue, Cambridge, MA 02139, USA}
\email{\{asah,msawhney\}@mit.edu}
\begin{document}

\maketitle
\begin{abstract}
Let $S$ be a subset of $\{1,\ldots,N\}$ avoiding the nontrivial progressions $x, x+y^2-1, x+ 2(y^2-1)$. We prove that $|S|\ll N/\log_m{N}$, where $\log_m $ is the $m$-fold iterated logarithm and $m\in\mathbf{N}$ is an absolute constant. This answers a question of Green.
\end{abstract}

\section{Introduction}

This paper contributes to the program of proving reasonable bounds for sets lacking polynomial progressions, a problem posed by Gowers \cite[Problem~11.4]{Gow01} after his proof of the first reasonable bounds in Szemer\'edi's theorem on arithmetic progressions~\cite{Gow98,Gow01a}.

In the late 1970's, Furstenberg~\cite{Fur77} and S\'ark\"ozy~\cite{Sar78a} independently proved that any subset of the natural numbers having positive upper density must contain a nontrivial\footnote{Here, \emph{nontrivial} means that both terms of the progression are distinct.} instance of the progression $x,x+y^2$. Furstenberg's proof, which appeared in the same paper in which he introduced his eponymous correspondence principle and used it to give a proof of Szemer\'edi's theorem via ergodic theory, produced no quantitative bounds, but S\'ark\"ozy's proof, which was via the circle method, showed that if $S\subseteq\{1,\ldots,N\}$ contains no nontrivial progressions $x,x+y^2$, then
\begin{equation*}
    |S|\ll\frac{N}{(\log{N})^{1/3-o(1)}}.
\end{equation*}
S\'ark\"ozy~\cite{Sar78b} extended his argument to all progressions of the form $x,x+y^n$ with bounds of the same quality, which were later improved by Balog, Pelik\'an, Pintz, and Szemer\'edi~\cite{BPPS94} and then Bloom and Maynard~\cite{BM22}. Slijep\v{c}evi\'c~\cite{Sli03} further extended S\'ark\"ozy's argument to work for all two-term polynomial progressions $x,x+P(y)$ where $P(0)=0$.

Note that it cannot possibly be the case that the Furstenberg--S\'ark\"ozy theorem holds for every single polynomial progression $x,x+P(y)$ with $P\in\mathbf{Z}[y]$. Indeed, the set of multiples of $3$ have positive density in the integers, but contain no progressions of the form $x,x+y^2+1$ because $y^2+1$ is never divisible by $3$ when $y$ is an integer. Polynomials $P\in\mathbf{Z}[y]$ for which any subset of the natural numbers with positive upper density must contain a nontrivial polynomial progression of the form $x,x+P(y)$ are called \emph{intersective}. Kamae and Mend\'es France \cite{KM78} showed that a polynomial is intersective if and only if it has a root modulo every natural number. Polynomials $P\in\mathbf{Z}[y]$ with $P(0)=0$ clearly satisfy this criterion, and so does $y^2-1$ and, more generally, any other polynomial with an integer root. There also exist polynomials, like $(y^3-19)(y^2+y+1)$, that are intersective but have no rational roots. The argument of Kamae and Mend\'es France produced no quantitative bounds, but Lucier~\cite{Luc06} generalized S\'ark\"ozy's argument to show that if $P\in\mathbf{Z}[y]$ is intersective and $S\subseteq\{1,\ldots,N\}$ contains no nontrivial progressions $x,x+P(y)$, then
\begin{equation*}
    |S|\ll_P\frac{N}{(\log{N})^{1/(\deg{P}-1)-o(1)}}.
\end{equation*}
The bound has since been improved by Rice~\cite{Ric19}.

Bergelson and Leibman~\cite{BL96} proved that if $P_1,\ldots,P_m\in\mathbf{Z}[y]$ are any polynomials satisfying $P_1(0)=\cdots=P_m(0)=0$, then any subset of the natural numbers with positive upper density must contain a nontrivial polynomial progression of the form
\begin{equation}\label{eq:polyprog}
    x,x+P_1(y),\ldots,x+P_m(y).
\end{equation}
Their argument, which was via ergodic theory, produced no quantitative bounds. Gowers's proof of Szemer\'edi's theorem provides quantitative bounds in the case that $P_1,\ldots,P_m$ are all linear. Green \cite{Gre02} proved quantitative bounds for subsets of integers avoiding three-term arithmetic progressions with common difference equal to the sum of two squares. This was substantially generalized in work of Prendiville~\cite{Pre17} to prove the existence of $k$-term arithmetic progressions with common difference a perfect $d$-th power. Both papers \cite{Gre02,Pre17} build on Gowers's seminal work~\cite{Gow98,Gow01a} and, in particular, crucially rely on the homogeneous nature of these polynomial progressions to proceed via the the density increment strategy using the local inverse theorems for the $U^{s}$-norms. The progressions considered by Prendiville are the most general to which Gowers's methods can possibly apply, and no effective results were known for any other progressions of length greater than two until recently.

Progress on effective bounds on the size of sets lacking more polynomial progressions was made first in the finite field setting. Bourgain and Chang~\cite{BC17} proved that any $S\subseteq\mathbf{F}_p$ lacking nontrivial \emph{nonlinear Roth configurations} $x,x+y,x+y^2$ has size $|S|\ll p^{14/15}$. Similar polynomial saving bounds were proven in the case of more general progressions $x, x+P_1(y), x+P_2(y)$ for linearly independent polynomials $P_1(y)$ and $P_2(y)$ by the first author~\cite{Pel18} and, independently, Dong, Li, and Sawin~\cite{DLS20}. While the proofs of these results avoided the use of the inverse theory of the Gowers norms, the arguments did not extend to longer polynomial patterns. The first result in this direction was due to first author \cite{Pel19}, who introduced the degree-lowering method and used it to prove power-saving bounds for sets lacking arbitrarily long progressions~\eqref{eq:polyprog} with linearly independent polynomials $P_1,\ldots,P_m$. Degree-lowering was then used by Kuca~\cite{Kuc21} and Leng~\cite{Len22,Len23} to give effective bounds for subsets of finite fields avoiding various families of polynomial progressions of complexity\footnote{Here, \emph{complexity} refers to true complexity, as defined in~\cite{Kuca23}.} $1$ or greater.

The first author and Prendiville~\cite{PP19, PP22} adapted the degree-lowering method to the integer setting to prove that any subset $S$ of $\{1,\ldots,N\}$ lacking non-linear Roth configurations must satisfy
\begin{equation*}
    |S|\ll\frac{N}{(\log\log{N})^c}
\end{equation*}
for some absolute constant $c>0$. This was extended in work of the first author~\cite{Pel20} to arbitrarily long progressions~\eqref{eq:polyprog} where the polynomials $P_1,\ldots,P_m$ have all distinct degrees. Proving a fully general quantitative polynomial Szemer\'edi theorem remains a very challenging open problem, and effective bounds for sets lacking polynomial progressions~\eqref{eq:polyprog} of complexity at least one where the polynomials $P_1,\ldots,P_m$ are not homogeneous of the same degree are unknown in the integer setting.

Our work establishes the first effective case of the polynomial {S}zemer\'{e}di theorem over the integers where the underlying pattern has complexity higher than one and the polynomials involved are not homogeneous of the same degree.

\begin{theorem}\label{thm:main}
There exists a positive integer $m = m_{\ref{thm:main}}$ such that the following holds. If $S \subseteq \{1,\ldots, N\}$ is such that $S$ does not contain a progression of the form 
\begin{equation}\label{eq:config}
x,~x + y^2-1,~x + 2(y^2-1) \qquad (y\neq \pm 1), 
\end{equation}
then 
\[|S|\ll \frac{N}{\log_{m}{N}}.\]
\end{theorem}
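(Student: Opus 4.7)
The plan is to establish Theorem~\ref{thm:main} via a density-increment strategy adapted to the inhomogeneous intersective polynomial $P(y) = y^2-1$ that defines the common difference of the three-term progression. Set $\delta = |S|/N$ and consider the counting operator
\[
\Lambda_P(f_0, f_1, f_2) := \mathbb{E}_{x \in [N]}\, \mathbb{E}_{y \in [M]}\, f_0(x)\, f_1(x+P(y))\, f_2(x+2P(y)),
\]
where $M$ is of order $N^{1/2}$ so that $|P(y)| \le N$. The hypothesis that $S$ avoids nontrivial configurations forces $\Lambda_P(1_S, 1_S, 1_S)$ to be concentrated only on the trivial choices $y = \pm 1$, hence negligible. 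The aim is to convert this into a density increment on a long arithmetic progression and then iterate.

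The first main step is a generalized von Neumann inequality bounding the difference between $\Lambda_P(f_0, f_1, f_2)$ and its expected main term by a Gowers uniformity norm of one of the $f_i$. For each fixed $y$ the pattern is a three-term arithmetic progression in $x$, so it has true complexity $1$; a short PET-type Cauchy--Schwarz scheme in the $y$ variable gives an initial $U^s$ bound, which the degree-lowering method of Peluse and Peluse--Prendiville should reduce to $U^2$, i.e.\ to a Fourier bias. The principal novelty at this step, compared with Prendiville's treatment of perfect-$d$-th-power common differences, is that $P$ does not vanish at $0$, and the shift must be carried through both the Weyl-sum estimates for $\sum_y e(\alpha P(y))$ and the accompanying major-arc analysis.

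The second main step exploits intersectivity to convert Fourier bias into a density increment. A coefficient satisfying $|\widehat{1_S}(\alpha)| \gg \delta^C$ on a non-major arc forces $\alpha$ to lie near some $a/q$ with $q \ll \delta^{-O(1)}$ via standard minor-arc estimates. Using the root $y = 1$ of $P$, one substitutes $y = 1 + qz$ to obtain
\[
P(1+qz) = q(2z + qz^2) = q \cdot P_1(z),
\]
where $P_1(z) := 2z + qz^2$ is a new integer polynomial of degree $2$ with $P_1(0)=0$. Restricting to $y \equiv 1 \pmod{q}$ and to an arithmetic progression in $x$ of common difference $q$, followed by rescaling, then yields a new instance of the problem: a set $S' \subseteq [N']$ with $N' \gg N/q^{O(1)}$, density $\delta' \ge \delta\bigl(1 + c\, \delta^A\bigr)$, and avoiding the analogous configuration with $P$ replaced by $P_1$.

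The final step is the iteration, which I expect to be the principal obstacle. At each round the polynomial is replaced by a new integer polynomial of degree $2$ vanishing at $0$ but with potentially large coefficients, so the two analytic steps above must be proved uniformly over an evolving family of polynomials; in particular, one must track how the implicit constants in the degree-lowering and Weyl-sum estimates depend on the polynomial's coefficients as these are compounded across iterations. Granting this uniformity, the density-doubling estimate $\delta \mapsto \delta(1+c\delta^A)$ shows that $O(\delta^{-A})$ iterations suffice to reach density $\Omega(1)$. Combining the polynomial loss from the major-arc passage with the more substantial loss coming from each degree-lowering step, and balancing the two against the number of iterations permitted by the starting $N$, is what yields the iterated-logarithm bound $1/\log_m N$ for some absolute $m$ stated in Theorem~\ref{thm:main}.
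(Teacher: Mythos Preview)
Your proposal takes a genuinely different route from the paper, and the difference is not merely cosmetic: it is the source of a quantitative gap that prevents your argument from reaching the stated bound.

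The paper does \emph{not} run a density increment. Instead it applies your substitution idea once, up front, as a $W$-trick: writing $y=2Wy'+1$ (with $W$ the product of primes up to a threshold $w$) converts $y^2-1$ into $4W(Wy'^2+y')$, so after passing to a dense residue class modulo $4W$ and rescaling, the problem becomes one for the polynomial $P(y')=Wy'^2+y'$ with $P(0)=0$. The paper then proves a \emph{transference} estimate: for all $1$-bounded $f_i$,
\[
\bigl|(NW)^{1/2}\Lambda^{W}(f_1,f_2,f_3)-\Lambda^{\mathrm{Model}}(f_1,f_2,f_3)\bigr|\le \delta N^2
\]
provided $W$ is large enough in terms of $\delta$, where $\Lambda^{\mathrm{Model}}$ is a weighted $3$-AP count. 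Kelley--Meka gives a lower bound on $\Lambda^{\mathrm{Model}}(1_{S'},1_{S'},1_{S'})$, and the transference then forces $\Lambda^W$ to be large, contradicting progression-freeness. Degree-lowering is used only inside the proof of transference (to show $\Lambda^W$ is $U^2$-controlled), and is invoked a \emph{bounded} number of times: from $U^s$ down to $U^2$ for an absolute constant $s$.

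Your scheme, by contrast, would invoke degree-lowering inside every density-increment step. The degree-lowering implication here carries a double-exponential loss (coming from the $U^3$-inverse theorem and Green--Tao equidistribution): if the count deviates by $\delta$ then $\|f_i\|_{U^2}^4\gg\exp(-\exp(\delta^{-C}))N$. This means the Fourier bias, and hence the density increment you extract, is of order $\exp(-\exp(\delta^{-C}))$ rather than $\delta^{O(1)}$. Your claimed iteration bound $\delta\mapsto\delta(1+c\delta^A)$ is therefore unjustified; the actual increment would force a tower-type number of steps, and the resulting bound on $|S|$ would be far worse than $N/\log_m N$. The paper's one-shot transference architecture is precisely what confines the double-exponential loss to a single application and makes the iterated-log bound attainable. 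Your identification of the evolving-polynomial uniformity as the ``principal obstacle'' is a secondary issue compared to this quantitative one; indeed, the $W$-trick already solves that problem by fixing the polynomial once.
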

The problem of proving quantitative bounds for sets lacking~\eqref{eq:config} was explicitly raised by Green \cite[Problem~11(i)]{GreOp}. 
\begin{remark*}
By tracing through our proof, and the inputs from \cite{Pel19}, we can take $m = 200$. By inserting some plausible improvements in the quantitative aspects of the theory of nilsequences, our argument would yield a bound of the form $|S|\ll N\exp(-(\log\log{N})^c)$ in the theorem; see the discussion at the end of \cref{sec:sketch}.
\end{remark*}

We give an outline of our key definitions, method, and new techniques in \cref{sec:notation,sec:sketch}, and describe the structure of the paper in \cref{sub:organization}.

\subsection*{Acknowledgments}
The first author thanks Sean Prendiville for helpful conversations. The second and third authors thank James Leng for helpful clarifications regarding \cite[Lemma~6.1]{Len22}. The third author thanks Dmitrii Zakharov for help with computations with nilpotent groups. The authors thank Ben Green for useful comments. The first author was supported by the NSF Mathematical Sciences Postdoctoral Research Fellowship Program under Grant No.~DMS-1903038. The second author was supported by the PD Soros Fellowship. The second and third authors were supported by NSF Graduate Research Fellowship Program DGE-2141064.

\section{Notation and key definitions}\label{sec:notation}

We use standard asymptotic notation throughout, as follows. For functions $f=f(n)$ and $g=g(n)$, we write $f=O(g)$ or $f \ll g$ to mean that there is a constant $C$ such that $|f(n)|\le C|g(n)|$ for sufficiently large $n$. Similarly, we write $f=\Omega(g)$ or $f \gg g$ to mean that there is a constant $c>0$ such that $f(n)\ge c|g(n)|$ for sufficiently large $n$. Finally, we write $f\asymp g$ or $f=\Theta(g)$ to mean that $f\ll g$ and $g\ll f$, and we write $f=o(g)$ or $g=\omega(f)$ to mean that $f(n)/g(n)\to0$ as $n\to\infty$. Subscripts on asymptotic notation indicate quantities that should be treated as constants. Furthermore, throughout the paper, we will use the standard notation $\mathbf{T}=\mathbf{R}/\mathbf{Z}$, $\mathbf{N}=\{1,2,\ldots\}$, $\mathbf{Z} = \{\ldots, -2, -1, 0, 1, 2, \ldots\}$, $[X] = \{1,2\ldots, \lfloor X\rfloor\}$, $[\pm X] = \{-\lfloor X\rfloor, \ldots, \lfloor X\rfloor\}$. Finally given a nonzero real $t$ and a set $Q$ we define $t\cdot Q = \{tq\colon q\in Q\}$.

One nonstandard piece of notation, following work of Tao and Ter{\"a}v{\"a}inen \cite{TT21}, is that we let $\on{poly}_m(Q)$ for $Q\ge 2$ denote a quantity bounded above by $\exp(\exp(m^{O(1)})) Q^{\exp(m^{O(1)})}$. For $0<\delta\le 1/2$, we let $\on{poly}_m(\delta)$ denote a quantity bounded \emph{below} by $\exp(-\exp(m^{O(1)})) \delta^{\exp(m^{O(1)})}$. Throughout the paper we will always assume $\delta\in(0,1/2]$.

 Given a function $f\colon\mathbf{Z}\to\mathbf{C}$ with $\snorm{f}_{\ell^{1}(\mathbf{Z})}<\infty$, we normalize the Fourier transform by defining
\[\wh{f}(\theta) = \sum_{x\in \mathbf{Z}}f(x)e(-x\theta),\]
where $e(x) = \exp(2\pi i x)$. Using this normalization, the Fourier inversion formula for $f$ satisfying $\snorm{f}_{\ell^{1}(\mathbf{Z})} + \snorm{f}_{\ell^{2}(\mathbf{Z})}<\infty$ is
\[f(x) = \int_{\mathbf{T}}\wh{f}(\Theta)e(x\Theta)~d\Theta.\] We define the normalized Fej\'er kernel on $\mathbf{Z}$ to be 
\begin{equation*}
\mu_H(h) = \frac{1}{\lfloor H\rfloor}\bigg(1-\frac{|h|}{\lfloor H\rfloor}\bigg)_{+}
\end{equation*}
and write
\begin{equation}\label{eq:fejer}
\mu_H(h)=\mu_H(h_1,\ldots,h_d)=\prod_{i=1}^d\mu_H(h_i)
\end{equation}
for $h=(h_1,\ldots,h_d)\in\mathbf{Z}^d$. 

We define two types of multiplicative discrete derivatives; for any complex-valued function $f$ on $\mathbf{Z}$ and $h,h_1,h_1'\in\mathbf{Z}$, set
\[\Delta_hf(x):=f(x)\ol{f(x+h)}\qquad\text{ and }\qquad\Delta'_{(h_1,h_1')} := \ol{f(x+h_1)}f(x+h_1').\]
We will occasionally write $\Delta_h^{(x)}f(x,y)$ and $\Delta_{(h,h')}'^{(x)}f(x,y)$, for example, if there are multiple possible variables to choose from, so that, for example, $\Delta_{h}^{(x)}f(x,y)=f(x,y)\overline{f(x+h,y)}$. With the definition of $\Delta'$ in hand, we can now define the Gowers box and uniformity norms. We will write expressions such as $\Delta_{(h_1,h_1'),(h_2,h_2')}'f(x)$ as shorthand for $\Delta_{(h_1,h_1')}'\Delta_{(h_2,h_2')}'f(x)$, and so on, where convenient; note the order of these operators does not matter.
\begin{definition}\label{def:box-norm}
Let $d\in\mathbf{N}$, $Q_1,\ldots,Q_d\subseteq\mathbf{Z}$ be finite subsets, and $f\colon\mathbf{Z}\to\mathbf{C}$. We define the Gowers box-norm of $f$ with respect to $Q_1,\ldots,Q_d$ to be
\[\snorm{f}_{\square_{Q_1,\ldots,Q_d}^{d}}^{2^{d}} :=\sum_{x\in \mathbf{Z}}\mathbf{E}_{\substack{h_i,h_i'\in Q_i\\i=1,\ldots,d}}\Delta_{(h_1,h_1'),\ldots,(h_d,h_d')}'f(x).\]
For $Q = Q_1 = \cdots = Q_d$ define
\[\snorm{f}_{U_{Q}^{d}}:=\snorm{f}_{\square_{Q,\ldots,Q}^{d}}.\]
\end{definition}
Note that our definition differs from that in \cite[Definition~2.1]{Pel20}, as the sum is not normalized.

For the entirety of the paper, we define 
\begin{equation}\label{eq:Wdef}
W = \prod_{\substack{2\le p\le w\\p\text{ prime}}}p,\qquad M = \lfloor N^{1/2}W^{-1/2}\rfloor,\qquad\text{and}\qquad P(y) = Wy^2 + y,
\end{equation}
for some parameter $w$. Eventually, $w$ will be chosen to be a sufficiently slowly growing function of $N$; throughout the paper, we ensure that various implied constants are independent of $W$. It is elementary to prove that $W\le 4^{w}$. As stated, $M$ is a function of a floating parameter $N$; $N$, up to a constant factor, will always denote the size of the support of the sets or functions under consideration. 

Next, we define the critical counting operators to be used throughout the paper.
\begin{definition}\label{def:W-operator}
Given $N$ and given finitely supported functions $f_1,f_2,f_3\colon\mathbf{Z}\to\mathbf{C}$, we define the trilinear operators $\Lambda^{W}$ and $\Lambda^{\mr{Model}}$ by
\[
\Lambda^{W}(f_1,f_2,f_3) = \sum_{\substack{x\in \mathbf{Z}\\|k|\le M}}f_1(x)f_2(x+P(k))f_3(x+2P(k))\\
\]
and
\[
\Lambda^{\mr{Model}}(f_1,f_2,f_3) = \sum_{\substack{x\in \mathbf{Z}\\d\in \mathbf{Z}}}f_1(x)f_2(z+d)f_3(z+2d)\nu(d),
\]
where 
\begin{equation}\label{eq:nu-def}
\nu(d) = \sqrt{\frac{N}{d}}\mbm{1}_{1\le d\le N}.
\end{equation}
We also define the ``difference'' counting operator
\begin{equation*}
\wt{\Lambda}(f_1,f_2,f_3) := (NW)^{1/2}\Lambda^{W}(f_1,f_2,f_3) - \Lambda^{\mr{Model}}(f_1,f_2,f_3).    
\end{equation*}
\end{definition}

Finally, we will repeatedly encounter the following \emph{dual functions} when carrying out our degree-lowering argument. 
\begin{definition}\label{def:dual}
Given functions $f_1,f_2,f_3\colon\mathbf{Z}\to\mathbf{C}$, we define 
\[\mc{D}^1(f_2,f_3)(x) = \mathbf{E}_{y\in [\pm M]}f_2(x+P(y))f_3(x+2P(y)),\]
\[\mc{D}^2(f_3,f_1)(x) = \mathbf{E}_{y\in [\pm M]}f_1(x-P(y))f_3(x+P(y)),\]
and
\[\mc{D}^3(f_1,f_2)(x) = \mathbf{E}_{y\in [\pm M]}f_1(x-2P(y))f_2(x-P(y)).\]
\end{definition}
These dual functions arise in a key maneuver in the degree-lowering method known as \emph{stashing}, a term coined by Manners. More discussion on stashing can be found in~\cite{Man21}, but for us it will almost always refer to the procedure of noting that if $f_1,f_2,f_3\colon\mathbf{Z}\to\mathbf{C}$ are $1$-bounded functions supported in $[N]$ and
\begin{equation*}
\left|\Lambda^W(f_1,f_2,f_3)\right|\geq 2\delta NM,
\end{equation*}
then
\begin{equation*}
 \left| \Lambda^W\left(\mathcal{D}^1(f_2,f_3),\overline{f_2},\overline{f_3}\right)\right|,\left| \Lambda^W\left(\overline{f_1},\mathcal{D}^2(f_1,f_3),\overline{f_3}\right)\right|,\left| \Lambda^W\left(\overline{f_1},\overline{f_2},\mathcal{D}^3(f_1,f_2)\right)\right|\gg\delta^2 NM.
\end{equation*}
This is a simple consequence of the Cauchy--Schwarz inequality. For example, we have
\begin{equation*}
    \Lambda^W(f_1,f_2,f_3)= \sum_{x\in \mathbf{Z}}f_1(x)\cdot\left(\sum_{k\in[\pm M]}f_2(x+P(k))f_3(x+2P(k))\right),
\end{equation*}
which is bounded above by
\begin{align*}
    &N^{1/2}\left(\sum_{x\in \mathbf{Z}}\sum_{k,k'\in[\pm M]}f_2(x+P(k))f_3(x+2P(k))\overline{f_2(x+P(k'))f_3(x+2P(k'))}\right)^{1/2} \\
    &=N^{1/2}\left(\sum_{\substack{x\in \mathbf{Z}\\ k'\in[\pm M]}}\left(\sum_{k\in[\pm M]}f_2(x+P(k))f_3(x+2P(k))\right)\overline{f_2(x+P(k'))f_3(x+2P(k'))}\right)^{1/2}
\end{align*}
by the Cauchy--Schwarz inequality. Rearranging now yields $\left| \Lambda^W(\mathcal{D}^1(f_2,f_3),\overline{f_2},\overline{f_3})\right|\gg\delta^2 NM$, and the other two inequalities are proved similarly.

\section{Proof sketch}\label{sec:sketch}
The starting point of our work is to use the $W$-trick of Green \cite{Gre05} to compare the count of certain three-term arithmetic progressions with shifted square common difference to the count of all three-term arithmetic progressions in a set, and then apply quantitative lower bounds for the number of three-term arithmetic progressions coming from Roth's theorem. This is closely motivated by work of Wooley and Ziegler \cite{WZ12}, who proved a version of the polynomial {S}zemer\'{e}di theorem with $y$ restricted to the set of shifted primes via such an approach. 

We will show for any $1$-bounded functions $f_1,f_2,f_3\colon\mathbf{Z}\to\mathbf{C}$ with support in $[N]$ that 
\begin{equation}\label{eq:transference}
\bigg|(NW)^{1/2}\Lambda^{W}(f_1,f_2,f_3) - \Lambda^{\mr{Model}}(f_1,f_2,f_3)\bigg|\ll\frac{N^2}{\log_m{N}}
\end{equation}
for some absolute constant $m\in\mathbf{N}$ (recall that $W$ will ultimately be chosen to be slowly growing with $N$). \cref{thm:main} then follows by dividing $S$ into classes modulo $4W$, shifting an appropriately dense congruence class of $S$ and scaling by $(4W)^{-1}$, noting that differences in this rescaled set of the form $Wy^2 + y$ correspond to differences of the form $y^2-1$ in the original set, and applying supersaturation results for Roth's theorem. 

The crux of our proof of \cref{thm:main} is establishing that the ``difference'' counting operator 
\[\wt{\Lambda}(f_1,f_2,f_3) = (NW)^{1/2}\Lambda^{W}(f_1,f_2,f_3) - \Lambda^{\mr{Model}}(f_1,f_2,f_3)\]
is controlled by the $U^{2}$-norm of the functions $f_i$ (or, more precisely, the $U^2_{W\cdot[N/W]}$-norm). Given such norm control, combining a variant of stashing with the $U^2$-inverse theorem implies that there exist linear phase functions $\psi_1,\psi_2,\psi_3$ such that the counting operator $\wt{\Lambda}\left(\psi_11_{[N]},\psi_21_{[N]},\psi_31_{[N]}\right)$ is large. The existence of such phase functions is ruled out by a direct Fourier analytic computation. Indeed, the weight function $\nu(d)$ is chosen so that the corresponding exponential sums closely matches that of $P(k)$, and the $W$-trick serves to remove the major arc contributions initially present in the Fourier transform of the squares. 

It follows from the triangle inequality that in order to establish $U^{2}$-norm control of the counting operator $\wt{\Lambda}$, it suffices to establish the result for the counting operators $\Lambda^{W}$ and $\Lambda^{\mr{Model}}$ separately. The (far) simpler of these two tasks is establishing $U^{2}$-norm control for $\Lambda^{\mr{Model}}(f_1,f_2,f_3)$. Note that if $\nu(d)$ were absent, then this is precisely the fact that the $U^2$-norm controls the count of three-term arithmetic progressions weighted by $f_1,f_2,$ and $f_3$. The result for $\Lambda^{\mr{Model}}$ follows by noting that the Fourier transform of $\nu(d)$ (after a bit of smoothing) is appropriately bounded in $L^1$. 

The vast majority of the paper, therefore, is devoted to establishing $U^2$-control of the operator $\Lambda^{W}$. We will do this by using the degree-lowering method, following work of the first author~\cite{Pel19} and the first author and Prendiville \cite{PP19, PP22}. This method, in our setting, can be broken down into two steps. First, we establish that $\Lambda^{W}$ is controlled by some high degree Gowers $U^s$-norm, and then we show (essentially) that $U^{t}$-norm control of $\Lambda^W$ implies $U^{t-1}$-norm control of $\Lambda^W$ whenever $t\geq 3$. These two steps taken together imply the desired $U^2$-norm control. The first step is proven by combining the PET induction scheme of Bergelson and Leibman~\cite{BL96} with the quantitative concatenation results of~\cite{Pel20}, which we can use as a black box. The majority of our effort, therefore, is concentrated on the second step of the argument.

Via an application of stashing, the key to the second step of our argument is establishing that 
\begin{equation}\label{eq:degree-lower}
\snorm{\mc{D}^{1}(f_2,f_3)}_{U^{k}_{W\cdot[N/W]}}^{2^k}\ge \delta N \Longrightarrow \snorm{f_i}_{U^{k-1}_{W\cdot[N/W]}}^{2^{k-1}}\ge \delta' N   
\end{equation}
for $k \ge 3$ and $i\in \{2,3\}$ (and the analogous statement for $\mc{D}^{3}(f_1,f_2)$). This, combined with further applications of stashing, implies $U^2$-norm control of $\Lambda^W$. By dual-difference interchange (\cref{lem:dual-interchange}), it is essentially sufficient to prove the result when $k=3$, and for the remainder of the sketch we will focus on this special case. 

First, let us pretend, for the sake of illustration, that the $U^3$-inverse theorem implied large correlation with a global quadratic form $e(\alpha x^2 + \beta x)$. This is, of course, a lie due to the existence of bracket-polynomials, but will help to motivate the main technical considerations. Furthermore, suppose for the sake of discussion that $\snorm{\mc{D}^{1}(f_2,f_3)}_{U^{k}_{[N]}}$ is large; this is a rather minor technical point that can be handled by splitting into congruence classes modulo $W$. It then follows from our ``fake'' $U^3$-inverse theorem that 
\begin{equation}\label{eq:fake1}
\bigg|\frac{1}{N}\sum_{x\in \mathbf{Z}}e(\alpha x^2 + \beta x)\mathbf{E}_{y\in [\pm M]}f_2(x+P(y))f_3(x + 2P(y))\bigg|
\end{equation}
is large. Setting
\begin{equation*}
\wt{f}_2(x) := f_2(x)\cdot e(2\alpha x^2 + 2\beta x)
\end{equation*}
and
\begin{equation*}
\wt{f}_3(x) := f_3(x)\cdot e(-\alpha x^2 - \beta x),
\end{equation*}
and, as in work of Leng~\cite{Len22}, using the polynomial identities
\begin{equation*}
x^2 = 2(x+P(y))^2 - (x+2P(y))^2 + 2P(y)^2
\end{equation*}
and
\begin{equation*}
x = 2(x+P(y)) - (x+2P(y)),
\end{equation*}
we get, by rearranging~\eqref{eq:fake1}, that
\[\bigg|\frac{1}{N}\sum_{x\in \mathbf{Z}}\mathbf{E}_{y\in [\pm M]}\wt{f}_2(x+P(y))\wt{f}_3(x+2P(y))e(2\alpha P(y)^2)\bigg|\]
is large. By applying Fourier inversion to $\wt{f}_2$ and $\wt{f}_3$ and then using orthogonality of characters and Parseval's identity, it follows that
\[\sup_{\kappa\in \mathbf{T}}\bigg|\mathbf{E}_{y\in [\pm M]}e(2\alpha P(y)^2 + \kappa P(y))\bigg|\]
is large. Using Weyl's inequality, and carefully analyzing various terms in the expansion of $P(y)^2$, shows that $\alpha$ and $\kappa$ are essentially major arc. More precisely, there exists a positive integer $q$ such that $q\le \delta^{-O(1)}$ and $\snorm{q\alpha}_{\mathbf{T}}\le \delta^{-O(1)}/N^2$ and $\snorm{q\kappa}_{\mathbf{T}}\le \delta^{-O(1)}/N$. This computation is a bit delicate; one needs that the coefficients of $P(y)$ are coprime in order to avoiding sacrificing factors of $W$. To simplify the rest of our discussion, we will pretend that, in fact, Weyl's inequality implies that $\alpha, \kappa = 0$; by passing to intervals of length $\delta^{O(1)}N$ and spacing at most $\delta^{-O(1)}$, one can turn this fantasy into a reality. 

Note that if $\alpha = 0$, we would have that 
\[\bigg|\frac{1}{N}\sum_{x\in \mathbf{Z}}e(\beta x)\mathbf{E}_{y\in [\pm M]}f_2(x+P(y))f_3(x + 2P(y))\bigg|\]
is large. Applying the second of our two identities, we may rewrite the above quantity as 
\[\bigg|\frac{1}{N}\sum_{x\in \mathbf{Z}}\mathbf{E}_{y\in [\pm M]}f_2(x+P(y))e(2\beta(x+P(y))f_3(x + 2P(y))e(-\beta(x+2P(y))\bigg|,\]
which, by making the change of variables $x\mapsto x-P(y)$, equals
\[\bigg|\frac{1}{N}\sum_{x\in \mathbf{Z}}\mathbf{E}_{y\in [\pm M]}f_2(x)e(2\beta x)f_3(x + P(y))e(-\beta(x+P(y))\bigg|.\]
That $f_2$ and $f_3$ must have large $U^2$-norms then follows by $U^2$-control for the configuration $(x, x+P(y))$, which is implicit in work of S\'{a}rk\"{o}zy \cite{Sar78a}; this is a simple consequence of Fourier inversion, orthogonality of characters, and the Gowers--Cauchy--Schwarz inequality.

To rigorously prove the implication~\eqref{eq:degree-lower} we must use the $U^3$-inverse theorem of Green and Tao~\cite{GT08b} in place of our ``fake'' $U^3$-inverse theorem. The Green--Tao inverse theorem produces a Lipschitz function $F$ on a degree $2$ nilmanifold $G/\Gamma$ and a polynomial sequence $g\colon\mathbf{Z}\to G$ (in the sense of \cref{def:polyseq}) such that 
\[
\bigg|\sum_{x\in \mathbf{Z}}F(g(x))\mathbf{E}_{y\in [\pm M]}f_2(x+P(y))f_3(x + 2P(y))\bigg|\ge \exp(-\delta^{-O(1)}) N.
\]
Mimicking our simplified sketch above, we now want to ``factor'' $F(g(x))$ into terms involving $x+P(y)$, $x+2P(y)$, and $P(y)$. Leng~\cite{Len22} accomplishes such a maneuver for the pattern $(x,x+P(y),x+Q(y),x+P(y)+Q(y))$ over finite fields via a vertical Fourier expansion of $F$ and noting that the Host-Kra cube of dimension $3$ has a constrained orbit for any degree $2$ polynomial sequence on a nilmanifold. 

In our case, however, the constraints coming from the Host-Kra cube are insufficient, and to proceed directly one would require a suitable understanding of the orbits of the linear forms $(x,y,x+y,x+2y)$ for a degree $2$ polynomial sequence on a nilmanifold. The understanding of such an orbit is rather delicate, as this set of forms does not satisfy the \emph{flag condition}, and the underlying equidistribution theory has only recently been addressed in work of Altman~\cite{Alt22b}. However, by using an earlier ``lifting'' trick of Altman \cite{Alt22}, which amounts to a simple change of variables in our setting, it instead suffices to constrain the orbit of $6x$ given the images of $(6y,3(x+y),2(x+2y))$. As the pattern $(6x,6y,3(x+y),2(x+2y))$ is translation invariant, the flag-equidistribution theory developed in work of Green and Tao \cite{GT10b} applies, and one can then derive the necessary constraint. We do this by following \cite[Section~14]{GT08b}, which establishes the analogous result for $k$-term arithmetic progressions, although various related results appear earlier in the ergodic theory literature \cite{BHK05, Fur96, Zie05}.

Having obtained a suitable constraint, we will next require a suitable analogue of Weyl's inequality for nilsequences. This can be found in the seminal paper of Green and Tao on the equidistribution of polynomial orbits on nilmanifolds \cite{GT12}. The main technical result of this work \cite[Theorem~1.9]{GT12} essentially proves that if a polynomial sequence $g(\cdot)$ fails to equidistribute on a nilmanifold, one can identify an abelian reason for it. Using this result we will prove that if the polynomial sequence $g(P(6y))$ fails to equidistribute, then one can factor the polynomial sequence $g$. By tracking carefully with Mal'cev coordinates (analogously to the sketch with Weyl's inequality earlier), one can prove that the factorization is of the same quality as if one knew that instead $g(y)$ failed to equidistribute. While such a factorization itself is not immediately useful, via iterating the factorization (as in the factorization results of \cite{GT12}), one can prove that instead of correlating with a degree two nilsequence, one, in fact, correlates with a degree one nilsequence. The form of our result is closely motivated by earlier work of Leng \cite[Lemma~6.1]{Len22}. Given such a result, and then Fourier expanding the degree one nilsequence, we can reduce to dealing with pure polynomial phases, and the analysis follows as sketched earlier. 

We end our discussion with a brief remark on bounds in the implication~\eqref{eq:degree-lower}. Our bounds are of iterated logarithmic type, as $\delta'$ is ultimately doubly-exponentially small in $\delta$. The first of these exponential terms is derived from the fact that we use the the $U^3$-inverse theorem of Green and Tao \cite{GT08b}; given more recent work of Sanders \cite{San12} the correlation could be improved to quasi-polynomial. The second source of exponentials comes from the double-exponential dependence on dimension implicit in \cite[Theorem~7.1]{GT12}; this dependence was quantified explicitly in recent work of Tao and Ter{\"a}v{\"a}inen \cite{TT21}. Therefore, even using the results of Sanders \cite{San12}, our bounds involve a large number of logs. Recently, however, the dimension dependence in results of Green and Tao \cite{GT12} have been improved to exponential for periodic nilsequences in work of Leng \cite{Len23}; Leng has also announced analogous results for all nilsequences, and, by inputting such results into our work (along with the necessary quantitative versions of results in \cite[Appendix~A]{GT12}), a substantially reduced number of logs would be achieved (likely yielding $\ll N\exp(-(\log\log N)^{c})$ in \cref{thm:main}). 

\subsection{Organization of the paper}\label{sub:organization}
In \cref{sec:model-control}, we prove $U^2$-control for $\Lambda^{\mr{Model}}$. In \cref{sec:nil-main}, we prove the constraints for degree $2$ nilmanifold orbits (in \cref{sub:leib}) and the necessary factorization theorem (in \cref{sub:factor}). In \cref{sec:deg-lower} we prove the main degree-lowering statement in this work. In \cref{sec:main}, we complete proof of \cref{thm:main}. In \cref{sec:nilmanifold}, we collect various definitions and basic properties regarding nilmanifolds. In \cref{sec:circle}, we collect various standard exponential sum estimates for the polynomial $P(y) = Wy^2 + y$. Finally in \cref{sec:random}, we collect various basic estimates regarding changing parameters in the box-norm.

\section{Control for \texorpdfstring{$\Lambda^{\mr{Model}}$}{LambdaModel}}\label{sec:model-control}
In this section, we will establish $U^2$-norm control of $\Lambda^{\mr{Model}}$ and deduce a uniform lower bound for $\Lambda^{\mr{Model}}$ from the best known bounds in Roth's theorem.

\begin{lemma}\label{lem:model-control}
Let $f_1,f_2,f_3\colon\mathbf{Z}\to\mathbf{C}$ be $1$-bounded functions supported on $[\pm \delta^{-1} N]$. If $N\gg \delta^{-O(1)}$ and
\[\left|\Lambda^{\mr{Model}}(f_1,f_2,f_3)\right|\ge \delta N^2,\]
then
\[\min_{i\in [3]}~\snorm{f_i}^{4}_{U^{2}_{[N]}}\gg \delta^{O(1)}N.\]
\end{lemma}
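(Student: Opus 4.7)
The proof is a Fourier-analytic reduction to $U^2$-control for unweighted three-term progression counts, where the weight $\nu$ contributes only an $L^1(\mathbf{T})$-bound on its Fourier transform. Fourier-inverting $\nu$ as $\nu(d) = \int_\mathbf{T}\widehat\nu(\theta)e(d\theta)\,d\theta$ and interchanging sums yields
\[
\Lambda^{\mr{Model}}(f_1,f_2,f_3) \;=\; \int_\mathbf{T}\widehat\nu(\theta)\,\Lambda_0\!\bigl(f_1 e_{-\theta},\;f_2 e_\theta,\;f_3\bigr)\,d\theta,
\]
where $e_\alpha(z) := e(\alpha z)$ and $\Lambda_0(g_1,g_2,g_3) := \sum_{x,d}g_1(x)g_2(x+d)g_3(x+2d)$, via the identity $f_2(x+d)e(d\theta) = (f_2 e_\theta)(x+d)\cdot e(-x\theta)$ to absorb the phase twist into $f_1,f_2$. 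Since the $U^2_{[N]}$-box norm is invariant under multiplication by linear characters, $\|f_i e_{\pm\theta}\|_{U^2_{[N]}} = \|f_i\|_{U^2_{[N]}}$, the phase factors are inessential for the final bound.

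The key auxiliary estimate is $\|\widehat\nu\|_{L^1(\mathbf{T})} \ll \delta^{-O(1)}\sqrt{N}$. I would prove this by a dyadic decomposition $\nu = \sum_{0\le j\le\log_2 N}\nu_j$ with $\nu_j := \nu\cdot\mathbbm{1}_{[2^j,2^{j+1})}$, observing that each $\nu_j$ has amplitude $\asymp 2^{-j/2}\sqrt{N}$ on an interval of length $2^j$; combined with the partial Dirichlet kernel bound $\|\widehat{\mathbbm{1}_{[a,b]}}\|_{L^1(\mathbf{T})}\ll \log(b-a)$ and summing the resulting geometric series, one obtains $\|\widehat\nu\|_{L^1(\mathbf{T})}\ll \sqrt{N}\log N$, with the $\log N$ absorbed into $\delta^{-O(1)}$ via the hypothesis $N\gg \delta^{-O(1)}$.

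Finally, the standard $U^2$-control for unweighted 3-AP counts --- obtained by two applications of Cauchy--Schwarz (or Fourier inversion with Parseval) together with the support and boundedness hypotheses on the $f_i$ --- yields a bound of the form $|\Lambda_0(g_1,g_2,g_3)|\ll \delta^{-O(1)}\sqrt{N}\min_i\|g_i\|_{U^2_{[N]}}^4$ for $1$-bounded $g_i$ supported on $[\pm\delta^{-1}N]$. Combining with the $\widehat\nu$-estimate gives
\[
|\Lambda^{\mr{Model}}(f_1,f_2,f_3)|\;\ll\;\delta^{-O(1)}\,N\min_i\|f_i\|_{U^2_{[N]}}^4,
\]
from which the hypothesis $|\Lambda^{\mr{Model}}(f_1,f_2,f_3)|\ge \delta N^2$ yields the desired $\min_i\|f_i\|_{U^2_{[N]}}^4\gg \delta^{O(1)}N$.

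The principal technical obstacle is the scale-matching step: the support $[\pm\delta^{-1}N]$ of $f_i$ is coarser than the averaging scale $[N]$ of the box norm, and the standard 3-AP $U^2$-control most naturally produces the norm $\|f_i\|_{U^2_{[\delta^{-1}N]}}$ rather than $\|f_i\|_{U^2_{[N]}}$. This conversion is handled by invoking the change-of-parameter estimates for box norms collected in \cref{sec:random}, which accomplish the transition at an acceptable $\delta^{-O(1)}$ cost.
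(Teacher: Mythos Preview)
Your approach has a genuine quantitative gap that cannot be repaired without an additional idea. The claimed bound
\[
|\Lambda_0(g_1,g_2,g_3)|\ll \delta^{-O(1)}\sqrt{N}\,\min_i\|g_i\|_{U^2_{[N]}}^4
\]
is false: for $g_1=g_2=g_3=\mathbbm{1}_{[N]}$ the left side is $\asymp N^2$ while the right side is $\asymp N^{3/2}$. The correct Fourier bound is $|\Lambda_0|\le\|g_2\|_2\|g_3\|_2\cdot\sup_\theta|\widehat{g_1}(\theta)|\ll\delta^{-1}N\sup_\theta|\widehat{g_1}(\theta)|$. Combining this with your (essentially correct) estimate $\|\widehat\nu\|_{L^1(\mathbf{T})}\asymp\sqrt{N}$ yields only $\sup_\theta|\widehat{f_i}(\theta)|\gg\delta^{2}\sqrt{N}$, which is far too weak to conclude $\|f_i\|_{U^2_{[N]}}^4\gg\delta^{O(1)}N$ via \cref{lem:converse}. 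The $\sqrt{N}$ loss is inherent: $\widehat\nu(0)\asymp N$ and $\widehat\nu$ stays of size $\gg N$ on an interval of length $\asymp N^{-1/2}$, so $\|\widehat\nu\|_{L^1}\gg\sqrt{N}$ genuinely. (Also, the hypothesis $N\gg\delta^{-O(1)}$ is a \emph{lower} bound on $N$ and cannot absorb a factor of $\log N$.)

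The paper's proof avoids this by first replacing $\nu$ with a mollified version $\tau\ast\nu^{(1)}$, where $\nu^{(1)}$ truncates $\nu$ to $[\delta^5 N,N]$ and $\tau$ is a normalized indicator of $[-\delta^{10}N,\delta^{10}N]$; the replacement costs only $O(\delta^2 N^2)$ in $\Lambda^{\mr{Model}}$. The point is that now one can bound
\[
\int_{\mathbf{T}}|\widehat{\tau\ast\nu^{(1)}}(\theta)|\,d\theta
=\int_{\mathbf{T}}|\widehat{\tau}(\theta)||\widehat{\nu^{(1)}}(\theta)|\,d\theta
\le\|\tau\|_2\|\nu^{(1)}\|_2\ll\delta^{-O(1)},
\]
with no $\sqrt{N}$ factor. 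Feeding this into the two-variable Fourier representation of $\Lambda^{\mr{Model}}$ and extracting $\sup_\theta|\widehat{f_i}(\theta)|$ then gives $\sup_\theta|\widehat{f_i}(\theta)|\gg\delta^{O(1)}N$, after which \cref{lem:converse} finishes. Your outline is structurally close to this, but it is precisely the smoothing step that makes the bookkeeping close.
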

\begin{proof}
By adjusting implicit constants, we may assume that $\delta$ is smaller than an absolute constant. Define 
\begin{equation*}
\nu^{(1)}(d) = \sqrt{\frac{N}{d}}\mbm{1}_{\delta^{5}N\le d\le N}
\end{equation*}
and
\begin{equation*}
\tau(d) = \frac{\mbm{1}_{|d|\le \delta^{10}N}}{2\delta^{10}N}.
\end{equation*}
Noting that $\nu^{(1)}(d)$ is $\delta^{-8}/N$-Lipschitz away from the boundary of its support and recalling the definition~\eqref{eq:nu-def} of $\nu$, we have
\[\sum_{d\in \mathbf{Z}}|(\tau\ast \nu^{(1)})(d) - \nu(d)|\le \sum_{d\in \mathbf{Z}}\left(|(\tau\ast \nu^{(1)})(d) - \nu^{(1)}(d)| + |\nu^{(1)}(d) - \nu(d)|\right)\le \delta^{2}N.\]
Therefore, since the $f_i$ are $1$-bounded,
\[\sum_{x,d\in \mathbf{Z}}f_1(x)f_2(x+d)f_3(x+2d)(\tau\ast \nu^{(1)})(d)\ge \delta N^2/2.\]
Furthermore, we have by orthogonality of characters, Cauchy--Schwarz, and Parseval that
\begin{align*}
&\bigg|\sum_{x,d\in \mathbf{Z}}f_1(x)f_2(x+d)f_3(x+2d)(\tau\ast \nu^{(1)})(d)\bigg|\\
&=\bigg|\int_{\mathbf{T}^2}\wh{f_1}(\Theta_1)\wh{f_2}(-2\Theta_1 + \Theta_2) \wh{f_3}(\Theta_1 -\Theta_2)(\wh{\tau\ast \nu^{(1)}}(\Theta_2))~d\Theta_1d\Theta_2\bigg|\\
&\le \int_{\mathbf{T}}|\wh{\tau\ast \nu^{(1)}}(\Theta_2)|~d\Theta_2 \cdot \sup_{\Theta_2\in \mathbf{T}}\int_{\mathbf{T}}|\wh{f_1}(\Theta_1)|\cdot |\wh{f_2}(-2\Theta_1 + \Theta_2)|\cdot |\wh{f_3}(\Theta_1 -\Theta_2)|~d\Theta_1\\
&\le \int_{ \mathbf{T}}|\wh{\tau}(\Theta_2)| |\wh{\nu^{(1)}}(\Theta_2)|~d\Theta_2 \cdot \sup_{\mathbf{T}} |\wh{f_1}(\Theta_1)| \cdot \sup_{\Theta_2\in \mathbf{T}}\int_{\Theta_1\in \mathbf{T}}|\wh{f_2}(-2\Theta_1 + \Theta_2)|\cdot |\wh{f_3}(\Theta_1 -\Theta_2)|~d\Theta_1\\
&\le \snorm{\tau}_{\ell^2(\mathbf{Z})}\snorm{\nu^{(1)}}_{\ell^2(\mathbf{Z})}\sup_{\Theta_1\in \mathbf{T}} |\wh{f_1}(\Theta_1)|\cdot \snorm{f_2}_{\ell^2(\mathbf{Z})}\snorm{f_3}_{\ell^2(\mathbf{Z})}\\
&\ll (\delta^{-10}N^{-1})^{1/2} \cdot (N\log(1/\delta))^{1/2} \cdot \sup_{\Theta_1\in \mathbf{T}} |\wh{f_1}(\Theta_1)| \cdot N \\
&\ll \delta^{-5}(\log(1/\delta))^{1/2} N\sup_{\Theta\in\mathbf{T}}\left|\widehat{f_1}(\Theta)\right|.
\end{align*}
An analogous inequality holds for $f_2$ and $f_3$, and therefore
\[\inf_{i\in [3]}\sup_{\Theta\in \mathbf{T}}|\wh{f_i}(\Theta)|\gg \delta^{O(1)}N;\]
the result now follows from the converse of the $U^{2}$-inverse theorem (see, e.g., \cref{lem:converse}).
\end{proof}

We next establish a uniform lower bound on $\Lambda^{\mr{Model}}$ using recent breakthrough work of Kelley and Meka \cite{KM23}.
\begin{lemma}\label{lemma:lower-bound}
Suppose that $f\colon\mathbf{Z}\to[0,1]$ with $\on{supp}(f)\in [N]$ and $\sum_{x\in \mathbf{Z}}f(x)\ge \delta N$. Then
\[\Lambda^{\mr{Model}}(f,f,f)\gg \exp\left(-\log(2/\delta)^{O(1)}\right)N^2.\]
\end{lemma}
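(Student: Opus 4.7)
The plan is to discard the weight $\nu(d)$ using its pointwise lower bound on $[1,N]$, pass to an indicator function, and then invoke the Kelley--Meka bound for Roth's theorem \cite{KM23} together with a standard supersaturation argument.

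First, since $\nu(d)=\sqrt{N/d}\ge 1$ for every integer $d\in[1,N]$, one has
\[
\Lambda^{\mr{Model}}(f,f,f)\;\ge\;\sum_{x\in\mathbf{Z}}\sum_{d=1}^{N}f(x)f(x+d)f(x+2d),
\]
so it suffices to bound the unweighted three-term arithmetic progression count from below. Next, I would pass to an indicator set: put $A:=\{x\in[N]:f(x)\ge \delta/2\}$. From $\sum_x f(x)\ge \delta N$ and $0\le f\le 1$ one obtains $|A|\ge\delta N/2$, and any three-term progression $(x,x+d,x+2d)$ lying entirely in $A$ with $d\in[1,N]$ contributes at least $(\delta/2)^{3}$ to the above sum. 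Thus it suffices to show that $A$ contains at least $\exp(-\log(2/\delta)^{O(1)})N^{2}$ such progressions.

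The Kelley--Meka theorem \cite{KM23} supplies an absolute constant $c>0$ for which any subset of an interval of length $L$ with density at least $\alpha$, and $L\ge L_{0}(\alpha):=\exp(\log(2/\alpha)^{1/c})$, contains a nontrivial three-term arithmetic progression. I would apply this with density threshold $\alpha=\delta/4$ and length $L:=10\,L_{0}(\delta/4)=\exp(\log(2/\delta)^{O(1)})$, which we may assume satisfies $L\ll N$ (otherwise the desired conclusion is vacuous for suitably chosen implied constants). A standard supersaturation step then follows by averaging over length-$L$ arithmetic progressions $P\subseteq[N]$: the number of such $P$ is $\asymp N^{2}/L$, the average of $|A\cap P|$ is $(|A|/N)L+O(1)\ge (\delta/2)L-O(1)$, and a Markov-type bound yields a $\gg \delta$-fraction of such $P$ with $|A\cap P|\ge(\delta/4)L$. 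Each such $P$ therefore contains a nontrivial three-term progression in $A$, and any three-term progression with common difference in $[1,N]$ is contained in at most $O(L^{2})$ length-$L$ arithmetic progressions inside $[N]$. Dividing, we obtain
\[
\#\bigl\{(x,d):\{x,x+d,x+2d\}\subseteq A,\;1\le d\le N\bigr\}\;\gg\;\frac{\delta N^{2}}{L^{3}}\;\gg\;\exp(-\log(2/\delta)^{O(1)})N^{2},
\]
which combined with the two earlier reductions proves the lemma. There is no substantive obstacle: the only step requiring a little care is the combinatorial bookkeeping in the supersaturation argument, namely bounding the multiplicity with which a given short three-term progression is counted among length-$L$ sub-progressions of $[N]$.
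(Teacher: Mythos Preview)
Your proof is correct and follows the same route as the paper: drop the weight via $\nu(d)\ge 1$ on $[1,N]$ and then invoke Kelley--Meka, the only difference being that the paper cites the counting form \cite[Theorem~1.2]{KM23} directly whereas you rederive the count via a Varnavides-style supersaturation. One small caveat: your claim that the average of $|A\cap P|$ equals $(|A|/N)L+O(1)$ is not quite right as stated, since points near the boundary of $[N]$ lie in fewer length-$L$ subprogressions, but this is easily repaired (for instance by allowing the progressions $P$ to live in $[-N,2N]$, or by first passing to the middle third of $[N]$) and in any case only costs a harmless factor polynomial in $\delta$, which is absorbed into the final bound.
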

\begin{proof}
Noting that $w(d)\ge 1$ for all $1\le d\le N$, the result follows from \cite[Theorem~1.2]{KM23}.
\end{proof}

\section{Nilmanifold considerations}\label{sec:nil-main}
Throughout this section, we will assume familiarity with standard terminology related to nilsequences and nilmanifolds. All terminology used is defined in \cref{sec:nilmanifold}; our conventions match those in \cite{GT12,TT21}. Furthermore, throughout this section, we will require various quantitative rationality claims from~\cite[Appendix~A]{GT12}, but with explicit dimensional dependencies. As stated in \cite[pg.~52]{TT21}, all bounds of the form $Q^{O_m(1)}$ in \cite[Appendix~A]{GT12} may in fact be taken to be $\poly_m(Q)$ (where $m$ is the dimension of the underlying nilmanifold). We will cite bounds from \cite[Appendix~A]{GT12}, but assume this more explicit dimensional quantification.

\subsection{Leibman group considerations}\label{sub:leib}
Throughout this subsection, define 
\[\tau(x,y) := (2(x+2y), 3(x+y), 6y, 6x)\]
for all $x,y\in\mathbf{Z}$. We will write $\tau_i(x,y)$, for $i=1,\ldots,4$, to refer to the $i$-th coordinate of $\tau(x,y)$. The key output of this subsection will be \cref{lem:dual-expansion}, which relates the values of a degree $2$ polynomial sequence at the first three coordinates of $\tau(x,y)$ to the value at the fourth coordinate.

\begin{lemma}\label{lem:dual-expansion}
Let $G/\Gamma$ be a filtered nilmanifold of dimension $m$, degree $2$, and complexity at most $L$. Let $F$ be a function on $G/\Gamma$ with vertical frequency $\xi$ with $|\xi|\le L$ and $\snorm{F}_{\mr{Lip}}\le L$. Let $g(\cdot)$ be a polynomial sequence with respect to $G/\Gamma$ (and the corresponding degree $2$ filtration). There exist $G$ and $F_{j,\alpha}$ such that for all $x,y\in \mathbf{Z}$,
\[F(g(\tau_4(x,y))\Gamma) = \sum_{\alpha} \prod_{j\in [3]} F_{j,\alpha}(g(\tau_j(x,y))\Gamma) + G(x,y),\]
where
\begin{itemize}
    \item $\snorm{G}_{\infty}\le L^{-1}$;
    \item for all $\alpha$, $F_{1,\alpha}$ has vertical frequency $-9\xi$, $F_{2,\alpha}$ has vertical frequency $8\xi$, and $F_{3,\alpha}$ has vertical frequency $2\xi$;
    \item there are $\on{poly}_{m}(L)$ summand indices $\alpha$; and
    \item we have $\snorm{F_{j,\alpha}}_{\mr{Lip}}\le \on{poly}_{m}(L)$ for all $\alpha$ and $j\in\{1,2,3\}$.
\end{itemize}
\end{lemma}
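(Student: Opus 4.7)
The plan is to follow the strategy of \cite[Section~14]{GT08b} (which established the analogous constraint for $k$-term arithmetic progressions), exploiting the fact that the tuple of linear forms $(\tau_1,\tau_2,\tau_3,\tau_4)$ is translation-invariant in the sense that the diagonal shift $(x,y)\mapsto (x+t,y+t)$ moves each $\tau_j$ by the same amount $6t$; this places us in the flag-equidistribution regime of \cite{GT10b}. The key starting point is the linear identity
\[
-9\tau_1(x,y) + 8\tau_2(x,y) + 2\tau_3(x,y) = \tau_4(x,y),
\]
which one verifies by direct computation ($-9(2x+4y)+8(3x+3y)+2(6y) = 6x$). Combined with the fact that $g$ is a polynomial sequence on a nilmanifold of degree~$2$, this identity will produce the desired factorization after a vertical Fourier expansion.

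In more detail, write $g$ in Mal'cev coordinates as $g(n)=g_0\,g_1^{n}\,g_2^{\binom{n}{2}}$ with $g_1\in G$ and $g_2\in G_2$. Since $[G,G]\subseteq G_2\subseteq Z(G)$ and the Hall--Petresco (or Baker--Campbell--Hausdorff) formula terminates at the first commutator bracket in a degree-$2$ nilpotent group, we can expand $g(\tau_j(x,y))^{c_j}$ and then rearrange the products $\prod_j g(\tau_j(x,y))^{c_j}$ into canonical form, picking up only $G_2$-valued commutator corrections. Using the linear identity above and the fact that the coefficients sum as $(-9)+8+2 = 1$, matching the coefficient of $g_0$ on both sides, we obtain an identity of the shape
\[
g(\tau_4(x,y)) = g(\tau_1(x,y))^{-9}\,g(\tau_2(x,y))^{8}\,g(\tau_3(x,y))^{2}\cdot z(x,y),
\]
where $z(x,y)\in G_2$; crucially, since $z$ encodes only quadratic data in $(x,y)$, it can be re-expressed, via the linear dependency among the $\tau_j$, as a polynomial in $\tau_1(x,y),\tau_2(x,y),\tau_3(x,y)$ and their pairwise products.

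Next, use the vertical frequency property $F(uv\Gamma)=e(\xi\cdot v)F(u\Gamma)$ for $v\in G_2$ to pull out the phase $e(\xi\cdot z(x,y))$, which is a quadratic exponential in $(x,y)$. The rewriting of $z$ above expresses this phase as a product of phases depending separately on the $\tau_j(x,y)$, with the pairwise-product terms handled by a short Fourier expansion in the abelianization; this expansion produces the sum over the index $\alpha$. For the remaining factor $F\bigl(g(\tau_1)^{-9}g(\tau_2)^{8}g(\tau_3)^{2}\Gamma\bigr)$, Fourier-expand $F$ along the horizontal torus $G/(G_2\Gamma)\cong \mathbf{T}^{m_1}$, truncating at horizontal frequency $R := \poly_m(L)$, so that the $L^{\infty}$-tail is absorbed into $G(x,y)$ with $\snorm{G}_\infty\le L^{-1}$ by the Lipschitz tail estimate. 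On each horizontal character $e(k\cdot (-9u_1+8u_2+2u_3))$ on the abelianization, the character splits cleanly as $e(-9k\cdot u_1)\,e(8k\cdot u_2)\,e(2k\cdot u_3)$, producing three factors whose vertical frequencies are exactly $-9\xi$, $8\xi$, $2\xi$, as required. The Lipschitz and complexity bounds of the form $\poly_m(L)$ follow from the quantitative rationality estimates of \cite[Appendix~A]{GT12} with the explicit dimensional dependencies promised in the preamble of \cref{sec:nil-main}.

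The main technical obstacle will be the bookkeeping of the vertical $G_2$-contributions: the BCH commutator corrections arising from the rearrangement of $g(\tau_1)^{-9}g(\tau_2)^{8}g(\tau_3)^{2}$ into canonical form and the quadratic term $z(x,y)$ both live in $G_2$, and must be apportioned across the three factors $F_{j,\alpha}$ so that the accumulated vertical frequencies come out to precisely $-9\xi,\,8\xi,\,2\xi$. At the level of the abelianization this apportionment reduces to the coefficient identity $(-9)+8+2=1$, but verifying it at the nonabelian level, and keeping Lipschitz constants and the number of summands uniform at $\poly_m(L)$ in both the complexity $L$ and the dimension $m$, will require a careful Mal'cev coordinate calculation combined with the Fourier approximation estimates cited above.
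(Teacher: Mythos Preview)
Your proposal has a genuine gap at the core step. The identity $g(\tau_4) = g(\tau_1)^{-9} g(\tau_2)^8 g(\tau_3)^2 \cdot z(x,y)$ with $z \in G_2$ is fine, but the subsequent plan to evaluate $F\bigl(g(\tau_1)^{-9} g(\tau_2)^8 g(\tau_3)^2 \Gamma\bigr)$ by ``Fourier-expanding $F$ along the horizontal torus'' and splitting into factors depending only on $g(\tau_j)\Gamma$ cannot work as stated. The map $(h_1, h_2, h_3) \mapsto h_1^{-9} h_2^8 h_3^2 \Gamma$ does \emph{not} descend to a map $(G/\Gamma)^3 \to G/\Gamma$: replacing any $h_j$ by $h_j \gamma_j$ with $\gamma_j \in \Gamma$ changes the product nonabelianly, so the value of $F$ on the resulting coset is not determined by the cosets $h_j\Gamma$. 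Moreover, a function on $G/\Gamma$ with nonzero vertical frequency $\xi$ is a section of a nontrivial line bundle over the horizontal torus, not a sum of horizontal characters; there is no ``horizontal Fourier expansion'' of $F$ that separates variables in the way you need. The same problem afflicts the phase $e(\xi\cdot z)$: it is a function of the integers $\tau_j(x,y)$, not of the cosets $g(\tau_j)\Gamma$, and there is no Lipschitz way to recover $\tau_j$ from $g(\tau_j)\Gamma$. What you flag at the end as ``bookkeeping'' is in fact this structural obstruction.

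The paper's proof circumvents the obstruction by first proving a \emph{constraint lemma} (\cref{lem:constraint}): restricted to the Leibman group $G^\tau = \{(g_0, g_0 g_1, g_0 g_1^{-2} g_2, g_0 g_1^4 g_2^2) : g_i\in G_i\}$, the fourth coset $g_4\Gamma$ is a well-defined $\poly_m(L)$-Lipschitz function $Q$ of the first three cosets, established by iteratively peeling off $g_0, g_1, g_2$ via local sections of the covering maps $G_i \to G_i/(\Gamma\cap G_i)$. Since $g(\tau(x,y))\in G^\tau$ (the flag condition you correctly note), one has $F(g(\tau_4)\Gamma) = (F \circ Q)(g(\tau_1)\Gamma, g(\tau_2)\Gamma, g(\tau_3)\Gamma)$ as a genuine Lipschitz function on $\Sigma\subseteq(G/\Gamma)^3$, and a quantitative partition-of-unity argument then approximates $F\circ Q$ by $\sum_\alpha\prod_j F_{j,\alpha}$. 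To force the vertical frequencies, the paper uses Leng's integration trick: the shifts $(g_1 g', g_2 g', g_3 g', g_4 g')$, $(g_1, g_2 g', g_3 g'^{-2}, g_4 g'^4)$, and $(g_1, g_2, g_3 g', g_4 g'^2)$ each preserve $G^\tau$ for $g'\in G_2$, and integrating over $g'\in G_2/(\Gamma\cap G_2)$ annihilates all terms except those whose frequencies solve the resulting $3\times 3$ linear system, with unique solution $(-9\xi, 8\xi, 2\xi)$. Your linear identity $-9\tau_1+8\tau_2+2\tau_3=\tau_4$ is precisely the abelian shadow of this system, but the constraint lemma is what makes the argument work at the coset level.
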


The key input into \cref{lem:dual-expansion} is that the image of $\tau(x,y)$ under a polynomial sequence on a nilmanifold is constrained. An analogous result $k$-term arithmetic progressions appears in \cite[Lemma~12.7]{GT08b}, and for the Host-Kra cube in \cite[Proposition~11.5]{GT10}. Our proof is essentially identical to that of \cite[Lemma~12.7]{GT08b} modulo certain algebraic issues regarding the Leibman group \cite{Lei10}.

We first require the notion of being continuous right-invertible. 
\begin{definition}\label{def:right-invertible}
Let $N_1, N_2$ be compact topological spaces, let $\pi\colon N_1\to N_2$ be a continuous map, and let $\Sigma\subseteq N_1$. We say that $\pi$ is \emph{continuously right-invertible} on $\Sigma$ if, for all $w\in \ol{\pi(\Sigma)}$, there exists a neighborhood $V_{w}\subseteq N_2$ of $w$ and a continuous map $\pi_w^{-1}\colon V_{w}\to N_1$ such that $\pi_w^{-1}\circ \pi$ is the identity map on $\Sigma \cap \pi^{-1}(V_w)$.
\end{definition}

Now we can precisely described the aforementioned constraint.

\begin{lemma}\label{lem:constraint}
Let $G/\Gamma$ be a be a filtered nilmanifold of dimension $m$, degree $2$, and complexity at most $L$. Let $G_{\bullet}$ denote the degree two filtration $G_0 = G_1\geqslant G_2\geqslant \mr{Id}_G$ on $G$ and $\mc{X}$ denote the chosen Mal'cev basis for $G/\Gamma$. Furthermore, define 
\[G^{\tau} := \{(g_0,g_0g_1,g_0g_1^{-2}g_2,g_0g_1^{4}g_2^{2})\colon g_i\in G_i\}.\]
Let $\pi\colon(G/\Gamma)^{4}\to(G/\Gamma)^{3}$ denote the standard projection onto the first three coordinates. Then there exists a compact set $\Sigma\subseteq(G/\Gamma)^{3}$ and a continuous function $Q\colon\Sigma\to G/\Gamma$ such that
\begin{itemize}
    \item $\pi(G^{\tau}\Gamma^{4})\subseteq\Sigma$;
    \item $Q(\pi(g\Gamma^{4})) = g_4\Gamma$ for all $g = (g_1, g_2, g_3, g_4)\in G^{\tau}$; and
    \item $Q$ is $\poly_m(L)$-Lipschitz, where the metric on $\Sigma$ is given by restricting \[\wt{d}((x_1,x_2,x_3),(z_1,z_2,z_3)) = \sum_{i\in [3]}d_{\mc{X}}(x_i\Gamma,z_i\Gamma)\] to $\Sigma$.
\end{itemize}
\end{lemma}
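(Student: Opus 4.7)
The plan is to define $Q$ by explicit inversion of the parameterization of $G^\tau$ on its first three coordinates, verify it descends to the quotient by $\Gamma^4$, and use a quantitative Mal'cev coordinate analysis to obtain compactness of $\Sigma$ and a $\poly_m(L)$-Lipschitz bound.

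First I record the inversion formula. Any $a = (a_1,a_2,a_3,a_4) \in G^\tau$ arises from a unique $(g_0, g_1, g_2) \in G \times G \times G_2$ via $a_1 = g_0$, $a_2 = g_0 g_1$, $a_3 = g_0 g_1^{-2} g_2$, and $a_4 = g_0 g_1^4 g_2^2$; solving the first three yields $g_0 = a_1$, $g_1 = a_1^{-1} a_2$, $g_2 = (a_1^{-1} a_2)^2 a_1^{-1} a_3$, so that
\[
Q^{*}(a_1, a_2, a_3) := a_1 \cdot (a_1^{-1}a_2)^4 \cdot \bigl((a_1^{-1}a_2)^2 \, a_1^{-1} a_3\bigr)^2
\]
is a polynomial word in the group operations of $G$, well-defined on all of $G^3$ and equal to $a_4$ on $\pi(G^\tau)$. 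Because the filtration has degree $2$, the subgroup $G_2$ is central in $G$, and a direct computation pushing all commutators of elements of $G_1$ into the central $G_2$ shows that $G^\tau \leq G^4$ is closed under multiplication and inversion, hence a closed subgroup.

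To see that $Q^{*}$ descends to a well-defined map $Q$ on $\Sigma := \pi(G^\tau \Gamma^4 / \Gamma^4)$, suppose $a, b \in G^\tau$ satisfy $a_i \Gamma = b_i \Gamma$ for $i \in \{1,2,3\}$, and set $c := b a^{-1} \in G^\tau$, so $c_1, c_2, c_3 \in \Gamma$. Reading off the parameterization of $c$, one has $g_0 = c_1 \in \Gamma$, $g_1 = c_1^{-1} c_2 \in \Gamma$, and $g_2 = g_1^2 g_0^{-1} c_3 \in \Gamma$; since membership in $G^\tau$ forces $g_2 \in G_2$, we get $g_2 \in \Gamma \cap G_2$. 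Therefore $c_4 = g_0 g_1^4 g_2^2 \in \Gamma$, giving $b_4 \Gamma = a_4 \Gamma$. Compactness of $\Sigma$ follows from the $\poly_m(L)$-rationality of $G^\tau$ with respect to $\Gamma^4$---a consequence of the explicit Mal'cev basis bounds in \cite[Appendix~A]{GT12}, as quantified in \cite{TT21}---which implies that $G^\tau \cap \Gamma^4$ is cocompact in $G^\tau$; thus $G^\tau \Gamma^4 / \Gamma^4$ is a closed compact subnilmanifold of $(G/\Gamma)^4$ and its projection $\Sigma$ is compact in $(G/\Gamma)^3$.

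For the quantitative Lipschitz bound, I work in Mal'cev coordinates on $G$: both group multiplication and inversion are polynomial maps with coefficients bounded polynomially in $L$, so $Q^{*}$---a polynomial word of bounded length---becomes a polynomial map with coefficients of size $\poly_m(L)$, hence $\poly_m(L)$-Lipschitz on any bounded region of $G^3$. To define $Q$ on $\Sigma$ I lift each point to a bounded fundamental domain for $\Gamma^3 \subset G^3$ via the Mal'cev basis; such a lift can be chosen locally Lipschitz with distortion $\poly_m(L)$ by the same quantitative rationality estimates. Composing with $Q^{*}$ and projecting to $G/\Gamma$ yields the desired $Q$. The main obstacle is precisely this last step: the coordinate bookkeeping needed to track how $Q^{*}$ interacts with the fundamental-domain lift and to ensure continuity across boundary identifications. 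This is resolved by choosing local sections around each point of $\Sigma$ and patching by compactness, with each local Lipschitz bound supplied by the explicit Mal'cev estimates of \cite[Appendix~A]{GT12}.
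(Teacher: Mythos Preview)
Your approach via the explicit word $Q^*$ is different from the paper's inductive construction, and the idea is sound, but there is a real gap in the Lipschitz/continuity argument.

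First, a minor slip: in your descent paragraph you set $c = ba^{-1}$ and claim $c_i \in \Gamma$. But $a_i\Gamma = b_i\Gamma$ means $a_i^{-1}b_i \in \Gamma$, not $b_ia_i^{-1} \in \Gamma$ (and $\Gamma$ is not normal). You should take $c = a^{-1}b$ instead; then $c \in G^\tau$ (since $G^\tau$ is a group) and $c_1,c_2,c_3 \in \Gamma$, and your parameterization argument goes through.

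The substantive gap is in the last paragraph. You define $Q$ by lifting a point of $\Sigma$ via a local section $s\colon U \to G^3$, applying $Q^*$, and projecting to $G/\Gamma$. For this to be well-defined and to agree with the intended map $\pi(g\Gamma^4) \mapsto g_4\Gamma$, you need $Q^*(s(p))\Gamma$ to be independent of the choice of lift. Your descent argument only shows this when \emph{both} lifts lie in $\pi(G^\tau)$. A generic local section of $(G/\Gamma)^3$ will not land in $\pi(G^\tau)$: the constraint is that $(b_1^{-1}b_2)^2 b_1^{-1}b_3 \in G_2$, and translating a lift $(b_1,b_2,b_3)$ by an arbitrary $(\gamma_1,\gamma_2,\gamma_3) \in \Gamma^3$ destroys this unless $-3\bar\gamma_1 + 2\bar\gamma_2 + \bar\gamma_3 = 0$ in $G/G_2$. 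So on overlaps your local definitions of $Q$ are not a priori consistent, and your ``patching by compactness'' is not justified.

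It turns out that for lifts of points in $\Sigma$ the map $Q^* \pmod{\Gamma}$ \emph{is} $\Gamma^3$-invariant, but this is a nontrivial cancellation: writing $\log Q^*(a) = -9A_1 + 8A_2 + 2A_3 + 6([A_1,A_2]-[A_1,A_3]+[A_2,A_3])$ in the Lie algebra, one must check that all mixed $[A_i,\Gamma_j]$ terms in $\log(Q^*(a)^{-1}Q^*(a\gamma))$ vanish when $(a_1,a_2,a_3) \in \pi(G^\tau)$, leaving $Q^*(a)^{-1}Q^*(a\gamma) = Q^*(\gamma_1,\gamma_2,\gamma_3) \in \Gamma$. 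You have not done this computation. The paper sidesteps the issue entirely: it builds a local right-inverse to $\pi$ on $G^\tau\Gamma^4$ by an inductive procedure that peels off one coordinate at a time using local sections of the covering maps $G_i \to G_i/(\Gamma \cap G_i)$, so that all lifts stay inside $G^\tau$ by construction and the consistency question never arises.
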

\begin{remark}
By $G^\tau\Gamma^4$ we mean the image of $G^\tau$ under taking $\Gamma$-cosets.
\end{remark}
\begin{proof}
Take $\Sigma = \ol{\pi(G^{\tau}\Gamma^{4})}$, and define 
\begin{align*}
G^{\tau}_0 &= G^{\tau}; \\
G_1^{\tau} &= \{(\mr{Id}_{G},g_1,g_1^{-2}g_2,g_1^{4}g_2^{2})\colon g_i\in G_i\};\\
G_2^{\tau} & = \{(\mr{Id}_{G},\mr{Id}_{G},g_2,g_2^{2})\colon g_2\in G_2\};\text{ and }\\
G_3^{\tau} & = \{(\mr{Id}_{G},\mr{Id}_{G},\mr{Id}_{G},\mr{Id}_{G})\}.
\end{align*}
Our argument is identical to \cite[Section~14]{GT08b}, aside from verifying that $G_i^{\tau}$ are groups. This can be verified using general results of Green and Tao \cite{GT10b}; we provide a short argument specialized to our case. It is trivial to verify that $G_2^{\tau}$ and $G_3^{\tau}$ are groups. That $G_0^{\tau}$ is a group follows from noting that
\begin{align*}
    \left\{(g(0),g(1),g(-2),g(4))\colon g\in\on{Poly}(\mathbf{Z},G_\bullet)\right\} &= \left\{(g_0,g_0g_1,g_0g_1^{-2}g_2^{3},g_0g_1^{4}g_2^{6})\colon g_i\in G_i\text{ for }i=0,1,2\right\} \\
    &= \left\{(g_0,g_0g_1,g_0g_1^{-2}g_2,g_0g_1^{4}g_2^2)\colon g_i\in G_i\text{ for }i=0,1,2\right\} \\
    &= G_0^\tau,
\end{align*}
where we have used that $G_2$ is divisible (since $G_2$, being a connected nilpotent Lie group, has surjective exponential map),
and recalling that $\on{Poly}(\mathbf{Z},G_\bullet)$ is a group. That $G_1^{\tau}$ is a group simply follows from noting that it is the intersection of two groups:
\begin{equation*}
 G_1^{\tau}=G_0^{\tau} \cap (\mr{Id}_G\times G_0\times G_0\times G_0).
\end{equation*}
Finally, observe that the groups $G_i^\tau$ have the nesting property
\[G_3^{\tau}\subseteq G_2^{\tau}\subseteq G_1^{\tau}\subseteq G^{\tau}.\]

Next, we will prove inductively that the restriction of $\pi$ is continuously right-invertible on $G_i^{\tau}/\Gamma^{4}$, starting at $i = 3$ and proceeding downwards. The crucial point is that the first non-identity coordinate in a generic element of $G_i^\tau$ is $g_i$ and, by inverting the quotient map $G_i\to G_i/\Gamma$ locally, we can ``remove'' $g_i$ and proceed inductively. We now give a formal proof following \cite[Section~14]{GT08b}.

Note that $G_3^{\tau}$ is isomorphic to the trivial group, and therefore $\pi$ is trivially continuously right invertible on $G_3^{\tau}/\Gamma^4$. Suppose that the restriction of $\pi$ to $G_{i+1}^{\tau}/\Gamma^4$ is continuously right invertible for some $0\leq i\leq 2$; we will show that the same holds for the restriction of $\pi$ to $G_{i}^{\tau}/\Gamma^4$.

Since $\Gamma$ acts freely and properly on the manifold $G$ (on the right) and the quotient $G/\Gamma$ is compact, the quotient maps $\rho_i\colon G_i\to G_i/\Gamma$ are covering maps. Therefore, for any point $z_i\in G_i/\Gamma$, there exists a neighborhood $V_{z_i}\subseteq G_i/\Gamma$ and a continuous function $f\colon V_{z_i}\to G_i$ such that $\rho_i\circ f$ is the identity map on $V_{z_i}$.

Now, consider a point $\pi(z)\in \overline{\pi(G_{i}^{\tau}/\Gamma^{4})}$, with $z=(z_1,z_2,z_3,z_4)$. Note that the first $i$ coordinates of $\pi(z)$ are $\mr{Id}_G\Gamma$. Consider the $(i+1)$-st coordinate, $z_{i+1}\in G_i/\Gamma$, of $\pi(z)$, and let $x = (x_1,x_2,x_3,x_4)\in G_{i}^{\tau}/\Gamma^{4}$ be such that $x_{i+1}\in V_{z_{i+1}}$ (with $V_{z_{i+1}}$ defined as in the previous paragraph). This implies that $(\rho_{i+1}\circ f)(x_{i+1}) = x_{i+1}$, i.e., $x_{i+1} = f(x_{i+1})\Gamma$. Define
\[F_i(x_{i+1}) = 
\begin{cases} 
(f(x_1), f(x_1), f(x_1), f(x_1)) &\text{if } i = 0 \\
(\mr{Id}_G, f(x_2), f(x_2)^{-2}, f(x_2)^{4}) &\text{if } i = 1 \\
(\mr{Id}_G, \mr{Id}_G, f(x_3), f(x_3)^{2}) &\text{if } i = 2 \\
\end{cases} \]
for all such $x$. We write $F=F_i$ as shorthand. Note that $F(x_{i+1})$ is continuous as a function of $x_{i+1}$, and hence of $\pi(x)$ (as $0\le i\le 2$), which means that $F$ defines a continuous function in an open neighborhood of $\pi(z)$. By definition, if $x\in G_i^{\tau}/\Gamma^{4}$, then there exists $g\in G_i^{\tau}$ such that $g\Gamma^{4} = (g_1,g_2,g_3,g_4)\Gamma^{4}=x$ (which we choose arbitrarily). Observe that $f(x_{i+1})^{-1}g_{i+1}$ is an element of both $G_{i}$ and $\Gamma$. Now, let 
\[\wt{g}_i = 
\begin{cases} 
(f(x_1)^{-1}g_1, f(x_1)^{-1}g_1, f(x_1)^{-1}g_1, f(x_1)^{-1}g_1) &\text{if } i = 0 \\
(\mr{Id}_G, f(x_2)^{-1}g_2, (f(x_2)^{-1}g_2)^{-2}, (f(x_2)^{-1}g_2)^{4}) &\text{if } i = 1 \\
(\mr{Id}_G, \mr{Id}_G, f(x_3)^{-1}g_3, (f(x_3)^{-1}g_3)^{2}) &\text{if } i = 2
\end{cases} \]
if $x$ is in a sufficiently small open neighborhood of $z$. Again let $\wt{g}=\wt{g}_i$ as shorthand. Observe that $\wt{g}\in G_{i}^{\tau}$ and $\wt{g}\in \Gamma^{4}$ by construction. Define $h$ to be such that 
\[g = F(x_{i+1})h\wt{g}.\]
The $(i+1)$-st coordinate of $h$ is the identity (as are the first $i$ coordinates). So, $h$ must lie in $G_{i+1}^{\tau}$. Therefore, 
\[x = g\Gamma^{4} = F(x_{i+1})h\wt{g}\Gamma^{4} = F(x_{i+1})h\Gamma^{4},\]
since $\wt{g}\in \Gamma^{4}$. Thus, 
\[F(x_{i+1})^{-1}x = h\Gamma^{4}.\]
Note that, as $F(x_{i+1})$ depends continuously on $\pi(x)$ in a neighborhood of $\pi(z)$, and is defined via a local continuous right-inverse, we have that $\pi(F(x_{i+1})^{-1}x)$ is within a neighborhood of $\pi(F(z_{i+1})^{-1}z)$. Furthermore, note that, as $h\in G_{i+1}^{\tau}$, the $(i+1)$-st coordinate of $F(x_{i+1})^{-1}x$ is $\mr{Id}_G\Gamma$, and therefore we are in position to apply induction. By induction, we may write 
\begin{equation}\label{eq:inductive-right-inverse}
F(x_{i+1})^{-1}x = h\Gamma^{4} = \pi_{(F(z_{i+1})^{-1}z)}^{-1}(\pi(F(x_{i+1})^{-1}x)),
\end{equation}
and therefore 
\[x = F(x_{i+1})\pi_{F(z_{i+1})^{-1}z}^{-1}(\pi(F(x_{i+1})^{-1}x)),\]
where $\pi_{F(z_{i+1})^{-1}z}^{-1}$ is the (localized) continuous right-inverse we have constructed for $G_{i+1}^{\tau}/\Gamma^{4}$. Note that $\pi(F(x_{i+1})x) = \wt{\pi}(F(x_{i+1}))\pi(x)$, where $\wt{\pi}$ is the projection onto the first three coordinates in $G^{4}$. By the previous discussion, the right-hand-side of \eqref{eq:inductive-right-inverse} depends continuously on $\pi(x)$ and is defined in a sufficiently small neighborhood of $\pi(x)$. Thus, the right-hand-side of \eqref{eq:inductive-right-inverse} provides the desired continuous right-inverse and the result follows.

We now glue these local right-inverses into a global continuous right-inverse $\Pi\colon\Sigma\to G^\tau\Gamma^4$ satisfying $(\Pi\circ\pi)(x)=x$ for all $x\in G^\tau\Gamma^4$. We can perform such gluing as long as all our local right-inverses agree on intersections. To see this, it suffices to show that $\pi$ is injective on $G^\tau\Gamma^4$. Suppose $\pi(x)=\pi(y)$ for $x,y\in G^\tau\Gamma^4$. We can find $g\in G^\tau$ such that $g^{-1}y\in\Gamma^4$, so $\pi(g^{-1}x)=(\mr{Id}_G\Gamma,\mr{Id}_G\Gamma,\mr{Id}_G\Gamma)$ since the right-action of $G$ on $G/\Gamma$ is compatible with $\pi$. Now $g^{-1}x\in G^\tau\Gamma^4$ and has first three coordinates being $\mr{Id}_G\Gamma$. This implies that if we write $g^{-1}x=(g_0\Gamma,g_0g_1\Gamma,g_0g_1^{-2}g_2\Gamma,g_0g_1^4g_2^2\Gamma)$, then $g_0\in\Gamma$ hence $g_1\in\Gamma$ hence $g_2\in\Gamma$ (since $\Gamma$ is a subgroup of $G$). Thus the final coordinate of $g^{-1}x$ is also $\mr{Id}_G\Gamma$, and hence $g^{-1}x,g^{-1}y\in\Gamma^4$. This implies $x=y$ as cosets, completing the proof of injectivity (and hence existence of a global inverse).

We now define $Q$ to be the fourth coordinate of this global right-inverse of $\pi$ on $G^\tau\Gamma^4$. By the above arguments, the first two bullet points are satisfied.

We finally briefly sketch how to obtain the necessary Lipschitz bound on $Q$. First, note from above that $Q$ is unique and, as $(G/\Gamma)^{4}$ has diameter bounded by $\on{poly}_m(L)$ \cite[Lemma~A.16]{GT12}, it suffices to consider points which are within distance $\on{poly}_m(L^{-1})$ of each other to prove Lipschitz bounds on $Q$. Furthermore, looking at our inductive construction, it suffices to show we can invert $\rho_i$ for $x'\in G_i/\Gamma$ such that the preimage in $G_i$ is suitably bounded and in a Lipschitz manner. The remainder of the analysis then consists of left multiplication by bounded group elements, which is Lipschitz by \cite[Lemma~A.5]{GT12} for left-multiplication and right-multiplication is always Lipschitz due to right-invariance of the metric on $G$. Note here the fact that if $g\in G$ is bounded, then $g^k$ for bounded $k$ and $g^{-1}$ are as well since $d(g^k,\mr{Id}_G)\le \sum_{i=1}^{k}d(g^{i},g^{i-1}) = kd(g,\mr{Id}_G)$ and $d(g,\mr{Id}_G) = d(g^{-1},\mr{Id}_G)$.

To invert $\rho_i$ in the neighborhood of a point $x'\in G_i/\Gamma$, first note that $G_i$ is a closed rational subgroup of $G$ and the last $\dim(G_i)$ elements of $\mathcal{X}$ are a valid Mal'cev basis for $G_i$. Therefore, by combining \cite[Lemmas~A.16 and A.17]{GT12}, there exists $g'\in G_i$ such that $g'\Gamma = x'$ and $d(g',\mr{Id}_G)\le \poly_m(L)$. Taking a sufficiently small neighborhood around $g'$, of size $\poly_m(L^{-1})$, for any points $g^{(1)}$ and $g^{(2)}$ in this neighborhood, we have
\begin{align*}
\inf_{\gamma \in \Gamma\setminus\{0\}}d(g^{(1)},g^{(2)}\gamma)&\ge \poly_m(L^{-1})\cdot \inf_{\gamma \in \Gamma\setminus\{0\}}d((g^{(2)})^{-1}g^{(1)},\gamma)\\
&\ge \poly_m(L^{-1})\cdot \bigg(\inf_{\gamma \in \Gamma\setminus\{0\}}d(\mr{Id}_G,\gamma) - d((g^{(2)})^{-1}g^{(1)},\mr{Id}_G)\bigg)\\
&\ge \poly_m(L^{-1}).
\end{align*}
The last inequality comes from the fact that $\psi(\gamma)\in \mathbf{Z}^m$, where $\psi$ are Mal'cev coordinates of the second kind (with respect to an implicit Mal'cev basis $\mc{X}$ giving the complexity bound). As $\psi(\gamma)$ is nonzero, \cite[Lemma~A.4]{GT12} then gives the lower bound.

Thus, in a small neighborhood of $g'$, we have that $d_G(g^{(1)},g^{(2)}) = d_{G/\Gamma}(g^{(1)}\Gamma,g^{(2)}\Gamma)$. Furthermore, the pushforward under $\rho_i$ of the neighborhood of $g'$ in $G_i$ surjects onto a small neighborhood of $x' = g'\Gamma$ in $G_i/\Gamma$. Therefore, given $z'\in G_i/\Gamma$ near $x'$, the map $f(z')$ can be defined by taking the closest point to $g_z'$ to $g'$ in $G_i$ such that $g_z'\Gamma = z'$. This gives the desired inverse map in the neighborhood of $x'$ which is Lipschitz by the above equality of metrics and, furthermore, we have that the inverse image of $x'$ in $G_i$ is $\poly_m(L)$-bounded, as desired. 
\end{proof}

We are now in position to prove \cref{lem:dual-expansion}.
\begin{proof}[Proof sketch of \cref{lem:dual-expansion}]
Let the filtration $G_{\bullet}$ be denoted by $G = G_0 = G_1 \geqslant G_2 \geqslant\mr{Id}_{G}$. Let $\tau^{[i]}$ denote the span in $\mathbf{R}^4$ of the set of vectors 
\begin{equation*}
 \left\{(\tau_1(x,y)^i,\tau_2(x,y)^i,\tau_3(x,y)^i,\tau_4(x,y)^i)\colon x,y\in\mathbf{Z}\right\}.
\end{equation*}
We find that 
\begin{align*}
\tau^{[1]} &= \mathbf{R}(1,1,1,1) \oplus \mathbf{R}(0,1,-2,4)\\
\tau^{[2]} &= \mathbf{R}(1,1,1,1) \oplus \mathbf{R}(0,1,-2,4) \oplus \mathbf{R}(0,0,1,2)\\
\tau^{[3]} &= \mathbf{R}(1,1,1,1) \oplus \mathbf{R}(0,1,-2,4) \oplus \mathbf{R}(0,0,1,2) \oplus \mathbf{R}(0,0,0,1)=\mathbf{R}^4.
\end{align*}
Therefore $\tau$ satisfies the \emph{flag condition} (which also follows from the fact that $\tau(x,y)$ is translation-invariant) and by \cite[Lemma~3.2]{GT10b} we have that $g(\tau(x,y))$ takes values within $G^{\tau}$ (abusively extending $g$ to vectors coordinate-wise).

Furthermore, by \cref{lem:constraint}, for $(g_1,g_2,g_3,g_4)\in G^\tau$ we have 
\[F(g_4\Gamma) = F(Q(g_1\Gamma,g_2\Gamma,g_3\Gamma)),\]
with $Q$ as in~\cref{lem:constraint}. Using the partition of unity argument suggested in \cite[Footnote~10]{TT21} and the quantitative bounds on $Q$ proven in \cref{lem:constraint}, we have that for $(g_1,g_2,g_3,g_4)\in G^{\tau}$,
\[F(g_4\Gamma) = \sum_{\alpha\in A}\prod_{j\in [3]} F_{j,\alpha}(g_j\Gamma) + G((g_1,g_2,g_3,g_4)\Gamma),\]
where
\begin{itemize}
    \item $\snorm{G((g_1,g_2,g_3,g_4)\Gamma)}_{\infty}\le L^{-1}/2$ for all $(g_1,g_2,g_3,g_4)\in G^{\tau}$;
    \item there are $\on{poly}_{m}(L)$ terms in the sum over $\alpha$; and
    \item the functions $F_{j,\alpha}(g_j\Gamma)$ are $\on{poly}_{m}(L)$-Lipschitz.
\end{itemize}
Note here that a qualitative version follows simply by applying the Stone--Weierstrass theorem (and noting that $(G/\Gamma)^3$ is compact).

This procedure, however, does not immediately yield that the $F_{j,\alpha}$'s have the desired vertical frequencies. Applying \cref{lem:vertical-expansion} (vertical expansion), we have may assume that the $F_{j,\alpha}$'s each have vertical frequencies $\xi_{j,\alpha}$ bounded by $\on{poly}_{m}(L)$. The crucial idea at this point (due to Leng \cite[Lemma~A.3]{Len22}) is noting that for $(g_1,g_2,g_3,g_4)\in G^{\tau}$ and $g'\in G_2$, we have
\[(g_1g',g_2g',g_3g',g_4g')\in G^{\tau},~ (g_1,g_2g',g_3g'^{-2},g_4g'^{4})\in G^{\tau}, \text{ and }(g_1,g_2,g_3g',g_4g'^{2})\in G^{\tau}.\]
Thus, if $g_1',g_2',g_3'\in G_2$, then 
$\wt{g}=(g_1g_1',g_2g_1'g_2',g_3g_1'g_2'^{-2}g_3',g_4g_1'g_2'^{4}g_3'^{2})\in G^{\tau}$, and
\begin{align*}
F(g_4\Gamma) &= e(-\xi(g_1'))e(-4\xi(g_2'))e(-2\xi(g_3'))F(g_4g_1'g_2'^{4}g_3'^{2}\Gamma)\\
&= e(-\xi(g_1'))e(-4\xi(g_2'))e(-2\xi(g_3'))\sum_{\alpha}\prod_{j\in [3]} F_{j,\alpha}(\wt{g}_j\Gamma) + \wt{G}((g_1,g_2,g_3,g_4)\Gamma,g_1',g_2',g_3').
\end{align*}
We now integrate over each $g_i'\in G_2/(\Gamma\cap G_2)$ (this is well-defined because $G_2/(\Gamma\cap G_2)$ is a torus onto which $e(\cdot)$ descends). Note that the integral of a nontrivial character $\xi$ over $G_2/(\Gamma\cap G_2)$ is zero, and therefore a term $\alpha$ only remains if the vertical frequencies solve the following system of linear equations:
\begin{align*}
0 &= -\xi + \xi_{1,\alpha} + \xi_{2,\alpha} + \xi_{3,\alpha},\\
0 &= -4\xi + \xi_{2,\alpha} -2\xi_{3,\alpha},\\
0 &= -2\xi + \xi_{3,\alpha},
\end{align*}
using the formulas for $\wt{g}_j$ and the vertical frequencies of the $F_{j,\alpha}$. The unique solution is $\xi_{1,\alpha} = -9\xi$, $\xi_{2,\alpha} = 8\xi$, and $\xi_{3,\alpha} = 2\xi$. Thus, after performing this integration, we find
\[F(g_4\Gamma)=\sum_{\alpha\in A^\ast}\prod_{j\in[3]}F_{j,\alpha}(g_j\Gamma)+\int_{(G_2/(\Gamma\cap G_2))^3}\wt{G}((g_1,g_2,g_3,g_4)\Gamma,g_1',g_2',g_3')~dg_1'dg_2'dg_3'.\]
where all $\alpha\in A^\ast$ are such that $F_{j,\alpha}$ has vertical frequencies $-9\xi,8\xi,2\xi$ for $j=1,2,3$ respectively. This is valid for all $(g_1,g_2,g_3,g_4)\in G^\tau$, hence it applies to $g(\tau(x,y))\in G^\tau$ and we have the desired expression.
\end{proof}

\subsection{Factorization result}\label{sub:factor}
The next lemma serves as the crucial analogue of Weyl's inequality for degree $2$ nilsequences. Although the statement is motivated by work of Leng \cite[Lemma~6.1]{Len22}, our proof mimics the factorization of polynomial sequences on nilmanifolds due to Green and Tao \cite[Theorem~1.19]{GT12}. However, our analogue of the basic decomposition result \cite[Proposition~9.2]{GT12} assumes that the polynomial sequence $g(P(6y))$ is not equidistributed, instead of the sequence $g(y)$. The crucial point, analogous to the case of polynomial phases sketched in \cref{sec:sketch}, is that one can still deduce a useful factorization of $g(y)$ from this.  

The key input into our argument is the following result on equidistribution of polynomial orbits in nilmanifolds due to Green and Tao \cite[Theorem~2.9]{GT12} with the explicit dimension dependencies given in work of Tao and Ter{\"a}v{\"a}inen \cite{TT21}.

\begin{theorem}[{\cite[Theorem~A.3]{TT21}}]\label{thm:equi-nilmanifold}
Let $m\ge 0$, $\delta\in (0,1/2)$, and $N\ge 1$. Let $G/\Gamma$ be a filtered nilmanifold of degree $d$ with complexity at most $1/\delta$. Let $g\colon\mathbf{Z}\to G$ be a polynomial sequence. If $(g(n)\Gamma)_{n\in [N]}$ is not $\delta$-equidistributed (\cref{def:equidistributed}), then there exists a horizontal character $0<|\eta|\le \delta^{-\exp((2m)^{O_d(1)})}$ such that 
\[\snorm{\eta\circ g}_{C^{\infty}[N]}\le \delta^{-\exp((2m)^{O_d(1)})},\]
where the implicit constant $O_d(1)$ only depends on $d$. 
\end{theorem}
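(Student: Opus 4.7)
The plan is to induct on the degree $d$ of the filtered nilmanifold $G/\Gamma$. For the base case $d=1$, $G/\Gamma$ is a torus of dimension $m'\le m$ and $g$ is an affine polynomial, so the result reduces to quantitative Weyl equidistribution on tori; this gives a simultaneous Diophantine approximation to the slope vector of $g$ and hence directly produces the desired horizontal character $\eta$ with bounds polynomial in $\delta^{-1}$ and in $m$.

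For the inductive step, assume the statement is known in degree $d-1$ and suppose $(g(n)\Gamma)_{n\in[N]}$ is not $\delta$-equidistributed. Choose a $1$-Lipschitz function $F\colon G/\Gamma\to\mathbf{C}$ with $\int F=0$ and $|\mathbf{E}_{n\in[N]} F(g(n)\Gamma)|>\delta$. Perform a vertical Fourier decomposition of $F$ relative to the central torus $G_d/(G_d\cap\Gamma)$, truncating at vertical frequencies $|\xi|\le K$ with $K=\delta^{-C_dm}$; Lipschitz bounds together with the complexity assumption make the truncation error acceptable. Pigeonholing over $\xi$ produces a vertical frequency $\xi$ and a function $F_\xi$ of vertical frequency $\xi$ and Lipschitz norm $\poly_m(\delta^{-1})$ with $|\mathbf{E}_{n\in[N]} F_\xi(g(n)\Gamma)|>\delta'$, where $\delta'=\delta^{O_d(m)}$.

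If $\xi=0$, then $F_\xi$ descends to the degree-$(d-1)$ nilmanifold $G/(G_d\Gamma)$ of dimension $m-\dim G_d$, and the inductive hypothesis produces a horizontal character on that quotient which then lifts to $G$. If $\xi\ne 0$, apply van der Corput to $n\mapsto F_\xi(g(n)\Gamma)$: for a positive proportion of shifts $h$ in an interval of length $\asymp \delta' N$, the shifted correlation $\mathbf{E}_n F_\xi(g(n+h)\Gamma)\ol{F_\xi(g(n)\Gamma)}$ has modulus $\gg(\delta')^{2}$. Writing $g(n+h)=g(n)\cdot\partial_h g(n)$ for a polynomial sequence $\partial_h g$ in the shifted filtration (whose top group collapses, since the two contributions at vertical frequency $\xi$ cancel), these correlations become polynomial orbit averages on a degree-$(d-1)$ product nilmanifold of dimension $O(m)$. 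The inductive hypothesis then yields, for each such $h$, a horizontal character $\eta_h$ of bounded size whose composition with $\partial_h g$ has small smoothness norm.

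The final step is to extract a single horizontal character $\eta$ on $G$, independent of $h$, realizing the desired bound $\snorm{\eta\circ g}_{C^\infty[N]}\le \delta^{-\exp((2m)^{O_d(1)})}$. The Taylor coefficients of $\eta_h\circ\partial_h g$ depend polynomially on $h$, and the uniform smoothness-norm bound over a dense family of $h$'s translates, via a Cauchy--Schwarz and pigeonhole argument on the $h$-dependence of the linear coefficients of $\eta_h$ combined with the quantitative rationality estimates of \cite[Appendix A]{GT12} in their explicit $\poly_m$ form, into Diophantine constraints on the Mal'cev coordinates of $g$ itself. I expect this extraction step to be the main obstacle: the naive bookkeeping at each of the $d$ inductive layers compounds dimensional factors multiplicatively, and achieving the stated bound $\exp((2m)^{O_d(1)})$ rather than a tower-type bound requires carefully tracking linear-algebraic dependencies in Mal'cev coordinates and preventing the vertical-frequency-truncation parameter $K$ from being raised to a power that grows too rapidly across the $d$ layers of the induction.
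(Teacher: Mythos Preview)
The paper does not prove this theorem: it is stated as a black-box input, cited verbatim from \cite[Theorem~A.3]{TT21} (itself a quantitative refinement of \cite[Theorem~2.9]{GT12}), and immediately used in the proof of \cref{prop:Malcev-bash} without further justification. So there is no proof in the paper to compare your proposal against.

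That said, your sketch is essentially the Green--Tao/Tao--Ter\"av\"ainen argument: induct on degree $d$, vertical Fourier expand relative to $G_d$, apply van der Corput if the vertical frequency is nonzero to drop to degree $d-1$, and then linearize the family of horizontal characters $\eta_h$ to extract a single $\eta$. Your instinct about the main difficulty is correct: the extraction step (passing from per-$h$ information on $\eta_h\circ\partial_h g$ to a single $\eta$ on $g$) is where the quantitative bookkeeping lives, and achieving $\delta^{-\exp((2m)^{O_d(1)})}$ rather than a tower requires the careful linear-algebraic analysis in \cite[Section~7]{GT12}, with the explicit $\poly_m$ dependencies tracked in \cite{TT21}. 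If you want to actually carry this out, you should consult those sources directly rather than this paper.
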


Recall the $C^\infty[N]$-norm from \cref{def:smooth}. We now state our analogue of \cite[Proposition~9.2]{GT12}.

\begin{proposition}\label{prop:Malcev-bash}
Fix $\delta \in (0,1/2)$ and $P$ and $W$ as in \eqref{eq:Wdef}. Let $G/\Gamma$ be an $m$-dimensional filtered nilmanifold of degree $2$ and complexity $L$ with filtration $G = G_0 = G_1\geqslant G_2\geqslant \mr{Id}_{G}$ denoted by $G_\bullet$. Furthermore, let $\mc{X}$ denote the Mal'cev basis of $G/\Gamma$ and let $F\colon G/\Gamma\to\mathbf{C}$ be such that $\snorm{F}_{\mr{Lip}}\le L$ and $F$ has a nonzero vertical frequency $2\cdot \xi$ such that $\snorm{\xi}_{\infty}\le L$. Let $g\colon\mathbf{Z}\to\mathbf{G}$ be a polynomial sequence with respect to $G_{\bullet}$ with $g(0) = \mr{Id}_{G}$. For all $r\in [W]$, define 
\[P_r(y) := \frac{P(Wy + r) - P(r)}{W}.\]

Let $\mc{I}\subseteq[\pm \delta^{-1}N^{1/2}W^{-1}]$ be an arithmetic progression of difference at most $\delta^{-1}$. Suppose that $W\le N^{1/10^4}$, $N\ge \poly_m(\delta^{-1}L)$, and 
\[\bigg|\sum_{y\in \mc{I}}F(g(6P_r(y))\Gamma)\bigg|\ge \delta N^{1/2}W^{-1}.\]

Then, there exists a factorization $g = \eps g' \gamma$ with polynomial sequences $\eps,g',\gamma\colon\mathbf{Z}\to G$ such that
\begin{itemize}
    \item for all $n\in [\pm \delta^{-1} N]$, $d(\eps(n),\eps(n-1))\le \on{poly}_m(L\delta^{-1})/N$ and $d(\eps(n),\mr{Id}_G)\le \on{poly}_m(L\delta^{-1})$;
    \item $\gamma$ is $\on{poly}_m(L\delta^{-1})$-rational and $\gamma(n)\Gamma$ is periodic with period at most $\on{poly}_m(L\delta^{-1})$; and
    \item $g'$ takes values only in $G'$, a simply connected proper $\on{poly}_m(L\delta^{-1})$-rational subgroup with respect to $\mc{X}$, and may be viewed as a polynomial sequence with respect to the filtration $G_{\bullet}'$ where $G_i = G' \cap G_i$.
\end{itemize}
\end{proposition}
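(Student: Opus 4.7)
The plan is to follow Green and Tao's proof of their factorization theorem \cite[Theorem~1.19]{GT12}, with the key twist that our non-equidistribution input concerns $y\mapsto g(6P_r(y))$ rather than $g$ itself. I organize the proof in three stages.

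\emph{Stage one: extracting a horizontal character for the composition.} A vertical Fourier expansion of $F$, combined with its nontrivial vertical frequency (which forces $F$ to have mean zero on $G/\Gamma$), upgrades the correlation hypothesis into a failure of $\poly_m(\delta^{-1}L)^{-1}$-equidistribution of the polynomial orbit $y\mapsto g(6P_r(y))$ on $\mc{I}$. Reparametrizing $\mc{I}$ as a step-$1$ progression of length $L\gtrsim \poly_m(\delta^{-1}L)^{-1}N^{1/2}W^{-1}$ (the common difference $d\le\delta^{-1}$ being absorbed into the various $\poly_m$ factors), applying \cref{thm:equi-nilmanifold} yields a nonzero horizontal character $\eta$ on $G$ with $|\eta|\le \poly_m(\delta^{-1}L)$ and $\snorm{\eta\circ g(6P_r(\cdot))}_{C^\infty[L]}\le \poly_m(\delta^{-1}L)$.

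\emph{Stage two: the Mal'cev bash.} Since $g(0)=\mr{Id}_G$ and the filtration has degree $2$, we may write $g(n)=g_1^n g_2^{\binom{n}{2}}$ with $g_1\in G$, $g_2\in G_2$; as $\eta$ vanishes on $G_2$, this gives $\eta\circ g(n)=n\alpha$ for $\alpha:=\eta(g_1)\in\mathbf{R}/\mathbf{Z}$. Expanding
\[6P_r(y) = 12W^2\binom{y}{2} + 6(W^2+2Wr+1)\binom{y}{1}\]
in the binomial basis (and reparametrizing $y=a+dj$ accordingly), the stage-one $C^\infty$-bound translates to $\mathbf{T}$-rounded bounds on both $12W^2 d^2\alpha$ and $6d[W^2 d + 2W^2 a + 2Wr + 1]\alpha$. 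The first bound produces $k\in\mathbf{Z}$ with $12W^2 d^2\alpha = k + O(\poly_m(\delta^{-1}L)W^2/N)$. Substituting into the second and using the crucial coprimality $\gcd(W^2 d + 2W^2 a + 2Wr+1,\,2W^2)=1$---which holds because the bracket is $\equiv 1\pmod W$, and is odd since $W$ is even---together with the assumption $W\le N^{1/10^4}$ (which ensures the error term is much smaller than $W^{-2}$), I conclude that $2W^2$ divides $k$ up to a bounded factor coprime to $W$. Hence there is an integer $C\le \poly_m(\delta^{-1})$ such that $\snorm{C\alpha}_\mathbf{T}\le \poly_m(\delta^{-1}L)/N$, and $\wt\eta:=C\eta$ is a horizontal character of complexity $\poly_m(\delta^{-1}L)$ with $\snorm{\wt\eta\circ g}_{C^\infty[\delta^{-1}N]}\le \poly_m(\delta^{-1}L)$.

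\emph{Stage three: building the factorization.} With a low-complexity horizontal character $\wt\eta$ witnessing non-equidistribution of $g$ itself, the factorization $g=\eps g'\gamma$ follows by the standard Green--Tao argument---essentially the proof of \cite[Proposition~9.2]{GT12}, with the explicit dimensional dependencies from \cite[Appendix~A]{GT12} sharpened as in \cite{TT21}. In brief, Taylor coefficients of $\wt\eta\circ g$ lying within $\poly_m(\delta^{-1}L)/N$ of rationals of denominator $\poly_m(\delta^{-1}L)$ split naturally into a rational piece (absorbed into $\gamma$, making $\gamma\Gamma$ periodic of small period), a slow piece (absorbed into $\eps$), and a residual piece $g'$ which takes values in the proper $\poly_m(\delta^{-1}L)$-rational subgroup $G':=\ker(\wt\eta)$, viewed as a polynomial sequence with respect to the induced filtration.

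The principal obstacle is the Mal'cev bash in stage two: translating non-equidistribution of $g(6P_r(\cdot))$ into non-equidistribution of $g$ without incurring losses of $W$, either in the complexity of the produced horizontal character or in the quality of its $C^\infty$-bound. The coprimality structure of the coefficients of $P(y)=Wy^2+y$---which propagates to $6P_r$---is exactly what enables this transition at a $W$-independent cost.
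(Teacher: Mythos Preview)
Your three-stage outline matches the paper's proof, and Stages~1 and~3 are essentially correct. The gap is in Stage~2: the claim that $\eta$ vanishes on $G_2$ is false in general. A horizontal character is a homomorphism $G\to\mathbf{T}$ annihilating $\Gamma$; it therefore kills $[G,G]$, but in a degree-$2$ filtration one only has $[G,G]\subseteq G_2\subseteq Z(G)$, and $G_2$ may be strictly larger than $[G,G]$ (e.g.\ $G=\mathbf{R}^2$ with $G_2=\mathbf{R}\times\{0\}$ and trivial $G_3$). Consequently, writing $\psi(g(n))=\binom{n}{2}t_2+\binom{n}{1}t_1$ in Mal'cev coordinates, one has $(\eta\circ g)(n)=(k\cdot t_2)\binom{n}{2}+(k\cdot t_1)\binom{n}{1}$ with $k\cdot t_2$ possibly nonzero, so $(\eta\circ g)(6P_r(y))$ is degree $4$ in $y$, not degree $2$.

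The paper handles this by setting $6P_r(y)=Ay^2+By$ with $A=6W^2$, $B=6(2Wr+1)$, expanding $\binom{Ay^2+By}{2}$ and $\binom{Ay^2+By}{1}$ explicitly in the binomial basis, and reading off $C^\infty$-bounds on all four coefficients. These first give control of $k\cdot t_2$: from the top two coefficients ($12A^2$ and $18A^2+6AB$) one obtains that both $\|Q_1W^4(k\cdot t_2)\|_{\mathbf{T}}$ and $\|Q_1W^2(2Wr+1)(k\cdot t_2)\|_{\mathbf{T}}$ are small, and the coprimality $\gcd(W,2Wr+1)=1$ lets one strip the $W$-powers to reach $\|Q_2(k\cdot t_2)\|_{\mathbf{T}}\ll N^{-2}$. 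Only then can one substitute back into the degree-$2$ and degree-$1$ coefficients---which mix $k\cdot t_2$ and $k\cdot t_1$---and run a second coprimality argument (now using $\gcd(A,A+B)=6$) to obtain $\|Q_3(k\cdot t_1)\|_{\mathbf{T}}\ll N^{-1}$. Your analysis covers only a linear-in-$n$ version of this second step; without first controlling $k\cdot t_2$, the conclusion $\|\wt\eta\circ g\|_{C^\infty[\delta^{-1}N]}\le\poly_m(\delta^{-1}L)$ does not follow, and Stage~3 cannot proceed.
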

Here the condition $N\ge\on{poly}_m(\delta^{-1}L)$ is used abusively to express that there is some such polynomial expression such that this condition on $N$ is sufficient; we use similar conventions later without comment. Now, we first state the following explicit binomial coefficient identities. While the precise constant coefficients are unimportant, various powers of $A$ and $B$ will indeed be used in our analysis. 
\begin{claim}\label{clm:identity}
We have 
\begin{align*}
\binom{An^2 + Bn}{1} &= 2A\binom{n}{2} + (A+B)\binom{n}{1}
\end{align*}
and
\begin{align*}
\binom{An^2 + Bn}{2} &= 12A^2\binom{n}{4} + (18A^2 + 6 AB)\binom{n}{3} + (7A^2 + 6AB - A + B^2)\binom{n}{2} \\
&+ \bigg(AB + \binom{A}{2} + \binom{B}{2} \bigg)\binom{n}{1}.
\end{align*}
\end{claim}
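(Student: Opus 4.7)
Both identities are polynomial identities in $n$, and (separately) in $A,B$, so the plan is a direct verification by expanding each side into a common basis. There is essentially no obstacle here beyond bookkeeping; the only choice is which basis to expand into for a clean comparison. I would use the basis of binomial coefficients $\binom{n}{k}$, which is already the form of the right-hand sides.

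For the first identity, I would simply note $\binom{An^2+Bn}{1} = An^2 + Bn$, and compute $2A\binom{n}{2} + (A+B)\binom{n}{1} = A(n^2-n) + (A+B)n = An^2+Bn$.

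For the second identity, my plan is to apply Vandermonde--Chu, $\binom{X+Y}{2} = \binom{X}{2} + XY + \binom{Y}{2}$, with $X = An^2$ and $Y = Bn$, and then expand each of the three pieces using the expansions
\[n = \binom{n}{1},\qquad n^2 = \binom{n}{1} + 2\binom{n}{2},\qquad n^3 = \binom{n}{1} + 6\binom{n}{2} + 6\binom{n}{3},\]
\[n^4 = \binom{n}{1} + 14\binom{n}{2} + 36\binom{n}{3} + 24\binom{n}{4}\]
(coefficients $k!\,S(m,k)$ for Stirling numbers of the second kind). Concretely, one obtains
\[\binom{An^2}{2} = \frac{A^2 n^4 - An^2}{2} = 12A^2\binom{n}{4} + 18A^2\binom{n}{3} + (7A^2-A)\binom{n}{2} + \binom{A}{2}\binom{n}{1},\]
\[(An^2)(Bn) = AB n^3 = 6AB\binom{n}{3} + 6AB\binom{n}{2} + AB\binom{n}{1},\]
\[\binom{Bn}{2} = \frac{B^2n^2 - Bn}{2} = B^2\binom{n}{2} + \binom{B}{2}\binom{n}{1}.\]
Summing these and collecting by $\binom{n}{k}$ yields the coefficients $12A^2$, $18A^2 + 6AB$, $7A^2 + 6AB - A + B^2$, and $AB + \binom{A}{2} + \binom{B}{2}$, matching the stated right-hand side exactly. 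Since both sides are polynomials in $n$ of degree $\le 4$ and the coefficients agree, the identity holds as a polynomial identity and hence for all integers $n$.

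The main (and only) obstacle is arithmetic care in collecting the $\binom{n}{2}$ coefficient: the cancellation $-6A^2 + 6A^2 = 0$ at the $n^3$ level of the direct expansion is what produces the relatively simple form $7A^2+6AB-A+B^2$, and it is worth double-checking this coefficient against the direct expansion $\binom{An^2+Bn}{2} = \tfrac{1}{2}\bigl(A^2n^4 + 2ABn^3 + (B^2-A)n^2 - Bn\bigr)$ as a sanity check.
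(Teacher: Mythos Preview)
Your proof is correct. The paper states this claim without proof, treating it as a routine polynomial identity to be verified by direct expansion; your Vandermonde--Chu decomposition together with the Stirling-number expansions of $n^k$ into falling factorials is exactly such a direct verification, and all the coefficients check out.
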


We also require the following claim regarding polynomial sequences and the $C^{\infty}[N]$-norm.
\begin{claim}\label{clm:shifting}
Fix a constant $C\ge 1$. Suppose that $S$ is a nonzero integer such that $|S|\le C$ and $I$ is an integer such that $|I|\le CN$. If $p$ is a polynomial of degree at most $d$, there exists $S' \le C^{O_d(1)}$ such that 
\[\snorm{S'p(x)}_{C^{\infty}[N]}\le C^{O_d(1)}\snorm{p(Sx+I)}_{C^{\infty}[N]}.\]
\end{claim}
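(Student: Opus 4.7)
The plan is to invert the affine substitution and expand directly in the binomial basis used to define $\snorm{\cdot}_{C^\infty[N]}$. Setting $q(x) := p(Sx+I)$, the inverse substitution yields $p(x) = q((x-I)/S)$, and we take $S' := d!\,|S|^d \le d!\cdot C^d = C^{O_d(1)}$.

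Writing $q(x) = \sum_{k=0}^d \beta_k \binom{x}{k}$ and using the identity $k!\,S^k \binom{(x-I)/S}{k} = \prod_{j=0}^{k-1}(x-I-jS)$, which is an integer-coefficient polynomial in $x$, we obtain
\[ d!\,S^d\,p(x) = \sum_{k=0}^d \beta_k \cdot \frac{d!}{k!}\,S^{d-k} \prod_{j=0}^{k-1}(x-I-jS), \]
in which every factor $d!/k!$ and every $S^{d-k}$ is an integer (since $k\le d$). Expanding $\prod_{j=0}^{k-1}(x-I-jS) = \sum_{l=0}^k e_{k,l} \binom{x}{l}$ in the binomial basis and using that each ``root'' $I+jS$ has magnitude at most $|I|+(k-1)|S|\le O_d(CN)$, Vieta's formulas combined with the integer-valued change-of-basis from monomials to the binomial basis (whose entries are bounded by $O_d(1)$) show that $e_{k,l}\in \mathbf{Z}$ with $|e_{k,l}|\le O_d((CN)^{k-l})$. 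Letting $\gamma_l$ denote the coefficient of $\binom{x}{l}$ in $d!\,S^d\,p(x)$, we then have $\gamma_l = \sum_{k\ge l} \beta_k \cdot (d!/k!)\,S^{d-k}\,e_{k,l}$, which is an $\mathbf{R}$-linear combination of the $\beta_k$ with integer coefficients each of size at most $O_d(C^{d-l}N^{k-l})$.

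The triangle inequality in $\mathbf{T}$, combined with the hypothesis $\snorm{\beta_k}_{\mathbf{T}} \le \snorm{q}_{C^\infty[N]}/N^k$ valid for $k\ge 1$, then gives $\snorm{\gamma_l}_{\mathbf{T}} \le O_d(C^d)\,\snorm{q}_{C^\infty[N]}/N^l$ for all $l \ge 1$, and hence $\snorm{S'p}_{C^\infty[N]}\le C^{O_d(1)}\snorm{q}_{C^\infty[N]}$ as desired. The main subtlety is the necessity of the factor $d!$ in $S'$: the naive choice $S'=|S|^d$ would fail because the lower-order binomial coefficients of $p$ inherit the denominators $k!\,S^k$ from $\binom{(x-I)/S}{k}$, and the extra factor $d!/k!\in\mathbf{Z}$ is precisely what clears these denominators into an integer-coefficient expression amenable to the $\mathbf{T}$-triangle inequality. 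This amounts to checking only that the size bound $|I|\le CN$ allows each factor of $|I+jS|^{k-l}$ to be absorbed into $N^{k-l}$ with just a $C^{k-l}$ multiplier, which makes the final telescoping sum over $l \le k \le d$ converge to the required $C^{O_d(1)}$ bound.
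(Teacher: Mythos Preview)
Your proof is correct and takes a genuinely different route from the paper's. The paper proceeds in two stages: first it replaces the large shift $I$ by a small representative $I'\in[S]$ (so $|I'|\le C$), controlling the integer shift $q(x)\mapsto q(x+(I'-I)/S)$ via Vandermonde's identity $\binom{n+J}{j}=\sum_{t\le j}\binom{n}{t}\binom{J}{j-t}$; then it invokes \cite[Lemma~8.4]{GT12} as a black box for the remaining substitution $p(x)\mapsto p(Sx+I')$ with both parameters now of height at most $C$. Your argument instead inverts the full affine map in one shot, clearing denominators with the explicit multiplier $S'=d!\,|S|^d$ and expanding $\prod_{j=0}^{k-1}(x-I-jS)$ directly in the binomial basis. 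This is more elementary and self-contained (no appeal to \cite{GT12}), and it yields an explicit value for $S'$; the paper's approach, by contrast, is shorter on the page precisely because it outsources the second step to existing machinery. Your observation that the constant term $\beta_0$ does not contaminate $\gamma_l$ for $l\ge 1$ (since $e_{0,l}=0$ for $l\ge 1$) is exactly the check needed to make the $\mathbf{T}$-triangle inequality go through, and your bound $|e_{k,l}|\ll_d(CN)^{k-l}$ from Vieta and the Stirling change-of-basis is the correct quantitative input.
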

\begin{proof}
Let $q(x) = p(Sx + I)$ and $I'\in [S]$ be such that $I' \equiv I \mod S$. Note that 
\[\snorm{p(Sx+I')}_{C^{\infty}[N]} = \snorm{q(x + (I'-I)/S)}_{C^{\infty}[N]}.\]
Vandermonde's identity implies
\[\binom{n+I}{j} = \sum_{0\le t\le j}\binom{n}{t}\binom{I}{j-t}.\]
As $|(I-I')/S|\le CN$, using Vandermonde's identity we have by expansion that 
\[\snorm{q(x + (I-I')/S)}_{C^{\infty}[N]}\le C^{O_d(1)}\snorm{q(x)}_{C^{\infty}[N]}.\]
Putting it together, we have
\[\snorm{p(Sx+I')}_{C^\infty[N]}\le C^{O_d(1)}\snorm{p(Sx+I)}_{C^\infty[N]}.\]
Finally, by \cite[Lemma~8.4]{GT12} (applicable since the heights of $S,I'$ are bounded by $C$) we can find appropriate $S'$ so that $\snorm{S'p(x)}_{C^\infty[N]}\le C^{O_d(1)}\snorm{p(Sx+I')}$. This completes the proof.
\end{proof}

\begin{proof}[Proof of \cref{prop:Malcev-bash}]
Let $\min(\mc{I}) = I$, $S$ denote the difference of the progression $\mc{I}$, and $T$ the length of $\mc{I}$. By assumption, we have that
\[\bigg|\sum_{y\in[T]}F(g(6P_r(Sy + I))\Gamma)\bigg|\ge \delta N^{1/2}W^{-1}.\]
Next, as $2\cdot\xi$ is a nonzero vertical frequency for $F$, we have
\[\int_{y\in G/\Gamma}F(y)~dy = 0.\] 
Therefore, by definition we see the polynomial sequence $g(6P_r(Sy + I))$ is not $3\delta^2L^{-1}$-equidistributed. (Notice that $g(6P_r(Sy + I))$ is a polynomial sequence with respect to the filtration $\wt{G}_{\bullet}$ defined by $G = \wt{G}_0 = \wt{G}_1 = \wt{G}_2 = \wt{G}_3 = \wt{G}_4 \geqslant \wt{G}_5 = G_2 \geqslant \mr{Id}_{G}$.)

Let $\psi(g)$ denote the Mal'cev coordinates of $g\in G$ with respect to $\mc{X}$. By the classification of polynomial sequences in terms of Mal'cev coordinates \cite[Lemma~6.7]{GT10b} and the assumption that $g(0) = \mr{Id}_G$, we have
\[\psi(g(n)) = \binom{n}{2}t_2 + \binom{n}{1}t_1,\]
where $t_i\in \mathbf{R}^m$ and the first $m-\dim(G_2)$ coordinates of $t_2$ are zero. As $g(6P_r(Sy + I))$ is not $3\delta^2L^{-1}$-equidistributed, by \cref{thm:equi-nilmanifold} there exists a nonzero horizontal character $\eta$ such that 
\[\snorm{(\eta \circ g)(6P_r(Sy + I))}_{C^{\infty}[N^{1/2}W^{-1}]}\le \poly_m(\delta^{-1}L).\] 
The implied constants in $\poly_m(\cdot)$ are absolute, as the degree of the filtration under consideration is always bounded by $5$. By \cref{clm:shifting} there exists a positive integer $Q\le\delta^{-O(1)}$ such that 
\begin{equation}\label{eq:horiz-Pr-small}
\snorm{Q(\eta \circ g)(6P_r(y))}_{C^{\infty}[N^{1/2}W^{-1}]}\le \poly_m(\delta^{-1}L).
\end{equation}

By a direct computation, we have 
\[6P_r(y) = Ay^2 + By,\]
where 
\begin{align*}
A = 6W^2\quad\text{ and }\quad B= 6(2Wr+1).
\end{align*}

Now let the horizontal character $\eta$ be represented by $k\in\mathbf{Z}^m$ in Mal'cev coordinates. Thus $(\eta\circ g)(n)=k\cdot(\binom{n}{2}t_2+\binom{n}{1}t_1)$. Plugging into \eqref{eq:horiz-Pr-small} and using \cref{clm:identity}, and unwrapping the definition of the $C^\infty[N^{1/2}W^{-1}]$-norm, we can initially deduce that 
\begin{equation*}
\snorm{12QA^2 \cdot (t_2\cdot k)}_{\mathbf{T}}\le\frac{\poly_m(\delta^{-1}L)W^{4}}{N^2},\qquad\snorm{(18A^2 + 6AB)Q \cdot (t_2\cdot k)}_{\mathbf{T}}\le\frac{\poly_m(\delta^{-1}L)W^{3}}{N^{3/2}}.
\end{equation*}
Therefore, there exists a positive integer $Q_1\le\delta^{-O(1)}$ such that 
\begin{equation}\label{eq:W4}
\snorm{Q_1W^4 \cdot (t_2\cdot k)}_{\mathbf{T}}\le\frac{\poly_m(\delta^{-1}L)W^4}{N^2},\qquad\snorm{Q_1(3W^4 + W^2(2Wr + 1)) \cdot (t_2\cdot k)}_{\mathbf{T}}\le\frac{\poly_m(\delta^{-1}L)W^3}{N^{3/2}}.
\end{equation}
Combining these bounds yields
\[\snorm{Q_1W^2(2Wr + 1) \cdot (t_2\cdot k)}_{\mathbf{T}}\le\frac{\poly_m(\delta^{-1}L)W^{3}}{N^{3/2}}.\]
We now, crucially, use that $\gcd(W,2Wr+1) = 1$. Note that
\[Q_1 (t_2\cdot k) = \frac{T_1}{W^4} + E_1 = \frac{T_2}{W^2(2Wr+1)} + E_2\]
with $|E_1|,|E_2|\le N^{-1}$ (say) and $T_1,T_2\in \mathbf{Z}$. However, 
\[\bigg|\frac{T_1}{W^4} - \frac{T_2}{W^2(2Wr+1)}\bigg|\ge N^{-1/2}\]
unless $T_1\cdot W^2(2Wr+1) - T_2\cdot(W^4) = 0$. It follows that $W^2\mid T_1$ and, therefore,
\[\snorm{Q_1W^2 \cdot (t_2\cdot k)}_{\mathbf{T}}\le\frac{\poly_m(\delta^{-1}L)W^{2}}{N^{2}},\]
using the first bound in~\eqref{eq:W4}. Noting that $W^2\mid A$ and using that $|B|\ll W^2$, we have
\begin{equation}\label{eq:horiz-t2-small}
\snorm{Q_1(7A^2 + 6AB - A) (t_2\cdot k)}_{\mathbf{T}}+\norm{2Q_1 \bigg(AB + \binom{A}{2}\bigg) (t_2\cdot k)}_{\mathbf{T}}\le  \frac{\poly_m(\delta^{-1}L)W^4}{N^2}.
\end{equation}
Now we use \eqref{eq:horiz-Pr-small} again but applied to the lower coefficients, and we appropriately cancel out the contributions from the terms in \eqref{eq:horiz-t2-small}. We find
\begin{equation*}
\snorm{2Q_1(B^2(t_2\cdot k) + 2A (t_1\cdot k))}_{\mathbf{T}}\le \frac{\poly_m(\delta^{-1}L)W^{2}}{N}
\end{equation*}
and
\begin{equation*}
\norm{2Q_1\bigg(\binom{B}{2}(t_2\cdot k) + (A+B) (t_1\cdot k)\bigg)}_{\mathbf{T}}\le \frac{\poly_m(\delta^{-1}L)W}{N^{1/2}}.
\end{equation*}
Multiplying the first equation by $A+B$ and the second equation by $2A$ and subtracting, we find that 
\begin{align*}
\snorm{2Q_1(B^2(A+B) - AB(B-1))(t_2\cdot k)}_{\mathbf{T}}&\le \frac{\poly_m(\delta^{-1}L)W^{3}}{N^{1/2}}.
\end{align*}
As $W^2 \mid A$, we find by similar argumentation that
\begin{align*}
\snorm{2Q_1 B^3(t_2\cdot k)}_{\mathbf{T}}&\le \frac{\poly_m(\delta^{-1}L)W^{3}}{N^{1/2}}.
\end{align*}
Again, crucially, $\gcd((2Wr+1)^3,W) = 1$. As $Q_1(t_2\cdot k)$ is near a fraction with denominator $W^2$, repeating fraction comparison arguments similar to above we find that for $Q_2=4Q_1\le \delta^{-O(1)}$ we have
\[\snorm{Q_2 (t_2\cdot k)}_{\mathbf{T}}\le\frac{\poly_m(\delta^{-1}L)}{N^2}.\]
We may substitute this bound into earlier equations, and using the size bounds on $B$ deduce that 
\begin{equation*}
\snorm{Q_2 A (t_1\cdot k)}_{\mathbf{T}}\le \frac{\on{poly}_m(\delta^{-1}L)W^2}{N},\qquad\snorm{Q_2 (A+B) (t_1\cdot k)}_{\mathbf{T}}\le \frac{\on{poly}_m(\delta^{-1}L)W}{N^{1/2}}.
\end{equation*}
As $\gcd(A,A+B) = \gcd(A,B) = 6$, another fraction comparison argument shows 
\begin{align*}
\snorm{6Q_2 (t_1\cdot k)}_{\mathbf{T}}\le \frac{\on{poly}_m(\delta^{-1}L)}{N}. 
\end{align*}
Thus for $Q_3=6Q_2\le \delta^{-O(1)}$ we have
\begin{equation*}
\snorm{Q_3 (t_2\cdot k)}_{\mathbf{T}}\le \frac{\on{poly}_m(\delta^{-1}L)}{N^2},\qquad\snorm{Q_3 (t_1\cdot k)}_{\mathbf{T}}\le \frac{\on{poly}_m(\delta^{-1}L)}{N}.
\end{equation*}
Note that the quality of major arc control here is comparable to a situation where we knew that $g(y)$ itself were poorly equidistributed on $[N]$.

The remaining proof is now essentially identical to the argument in \cite[Proposition~9.2]{GT12}, as we are in the same essential position. We will define $G'$ to be the connected component of $\on{ker}(\eta)$ (as a subgroup of $G$) and, due to the size bounds on $\eta$, we have that $G'$ is a $\on{poly}_m(\delta^{-1}L)$-rational subgroup. It is seen to be simply connected by considering the Mal'cev coordinate representation for $\eta$.

We choose vectors $u_1,u_2\in\mathbf{R}^m$ such that $\snorm{t_j-u_j}_{\infty}\le \on{poly}_m(\delta^{-1}L)N^{-j}$ for $j=1,2$, such that $Q_3(u_1\cdot k)$ and $Q_3(u_2\cdot k)$ are integers, and such that the first $m-\dim(G_2)$ coordinates of $u_2$ are zero. We then choose vectors $v_1$ and $v_2$ with coordinates rationals with denominator bounded by $\on{poly}_m(\delta^{-1}L)$ and such that $k\cdot u_j = k\cdot v_j$ for $j=1,2$. 

Let $\eps$ and $\gamma$ be the polynomial sequences $\mathbf{Z}\to G$ for which
\begin{equation*}
\psi(\eps(n)) = \binom{n}{2}(t_2-u_2) + \binom{n}{1}(t_1-u_1)
\end{equation*}
and
\begin{equation*}
\psi(\gamma(n)) = \binom{n}{2}v_2 + \binom{n}{1}v_1,
\end{equation*}
and set 
\begin{equation*}
g' = \eps^{-1}g\gamma^{-1}.    
\end{equation*}
By construction, $g'$ takes values in $G'$ since $\eta$ is a horizontal character. We have that $\gamma$ is rational, as the denominators of $v_i$ are $\on{poly}_m(\delta^{-1}L)$-bounded, and therefore by \cite[Lemma~A.11(iv),~A.12(ii)]{GT12} we have that $\gamma(\cdot)$ is $\on{poly}_m(\delta^{-1}L)$-rational and periodic of period at most $\on{poly}_m(\delta^{-1}L)$. The claimed smoothness bounds for $\eps$ follow using that $\snorm{t_j-u_j}_{\infty}\le \on{poly}_m(\delta^{-1}L)N^{-j}$ and \cite[Lemma~A.4]{GT12}, which converts between distance in the metric $d_{\mc{X}}$ and differences in Mal'cev coordinates. This completes the proof. 
\end{proof}

Note that subgroup $G'$ obtained from \cref{prop:Malcev-bash} is not dependent on the vertical character $\xi$ in any manner; we only needed that the mean of $F$ on $G/\Gamma$ is $0$. However, we may iterate \cref{prop:Malcev-bash} until $\xi$ is trivial on $G_2'=G_2\cap G'$.

\begin{lemma}\label{lem:poly-sequence}
Fix $\delta \in (0,1/2)$ and $P$ and $W$ as in \eqref{eq:Wdef}. Let $G/\Gamma$ be an $m$-dimensional filtered nilmanifold of degree $2$ and complexity $L$. Furthermore, let $\mc{X}$ denote the Mal'cev basis of $G/\Gamma$ and let $F\colon G/\Gamma\to\mathbf{C}$ be such that $\snorm{F}_{\mr{Lip}}\le L$ and $F$ has vertical frequency $2\cdot \xi$ such $\snorm{\xi}_{\infty}\le L$. Let $g\colon\mathbf{Z}\to\mathbf{G}$ be a polynomial sequence with respect to the filtration $G = G_0 = G_1\geqslant G_2\geqslant \mr{Id}_{G}$, denoted by $G_{\bullet}$, and $g(0) = \mr{Id}_{G}$. Finally, for $r\in [W]$, define 
\[P_r(y) = \frac{P(Wy + r) - P(r)}{W}.\]

Suppose that $W\le N^{1/10^{4}}$, $N\ge \on{poly}_m(\delta^{-1}L)$, and that 
\[\bigg|\sum_{y\in[\pm T]} F(g(6P_{r}(y))\Gamma)\bigg| \ge \delta N^{1/2}W^{-1}\]
for some $T\in [\delta^{-1} N^{1/2}W^{-1}]$. Then there exists a factorization $g=\eps g'\gamma$ and subgroup $G'$ with polynomial sequences $\eps,g',\gamma\colon\mathbf{Z}\to G$ such that
\begin{itemize}
    \item for all $n\in [\pm \delta^{-2} N]$, $d(\eps(n),\eps(n-1))\le \on{poly}_m(L\eps^{-1})/N$ and $d(\eps(n),\mr{Id}_G)\le \on{poly}_m(L\delta^{-1})$;
    \item $\gamma$ is $\on{poly}_m(L\delta^{-1})$-rational and $\gamma(n)\Gamma$ is periodic with period at most $\on{poly}_m(L\delta^{-1})$;
    \item $g'$ takes values in a connected $\on{poly}_m(L\delta^{-1})$-rational subgroup $G'$ and is a polynomial sequence with respect to the filtration $G_{\bullet}'$, where $G_j' = G_j \cap G'$; and
    \item $\xi$ is trivial on $G_2'=G_2\cap G'$.
\end{itemize}
\end{lemma}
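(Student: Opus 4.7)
The plan is to iterate \cref{prop:Malcev-bash} until the vertical frequency $\xi$ becomes trivial on $G_2\cap G'$. Initially, $[\pm T]$ is itself an arithmetic progression of common difference $1$ contained in $[\pm\delta^{-1}N^{1/2}W^{-1}]$, so a direct application of \cref{prop:Malcev-bash} (with $\delta$ replaced by $\delta/O(1)$ to account for $|[\pm T]|\asymp 2T$) yields a first factorization $g=\eps^{(1)}g^{(1)}\gamma^{(1)}$ where $g^{(1)}$ takes values in a simply connected proper $\poly_m(L\delta^{-1})$-rational subgroup $H_1\subsetneq G$, together with the smoothness, rationality, and periodicity bounds stated there. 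If $\xi$ is already trivial on $G_2\cap H_1$, we are done; otherwise, we iterate.

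The iterative step reduces the hypothesis on $g$ to a correlation hypothesis for $g^{(1)}$ on a nilmanifold quotient of $H_1$, to which \cref{prop:Malcev-bash} can be reapplied. To effect this reduction, partition $[\pm T]$ into arithmetic progressions of common difference equal to the period $Q_1\le\poly_m(L\delta^{-1})$ of $y\mapsto\gamma^{(1)}(6P_r(y))\Gamma$ and of length $\sim\delta T/\poly_m(L\delta^{-1})$, on which $\eps^{(1)}(6P_r(\cdot))$ varies by at most $O(L^{-1})$ (using the Lipschitz bound on $\eps^{(1)}$). On each such subprogression based at $y_j$, writing $\gamma_0:=\gamma^{(1)}(6P_r(y_j))$, the summand becomes
\[
F(g(6P_r(y))\Gamma)=F\bigl(\eps^{(1)}(6P_r(y_j))\,g^{(1)}(6P_r(y))\,\gamma_0\,\Gamma\bigr)+O(L^{-1}).
\]
Left-translating $F$ by $\eps^{(1)}(6P_r(y_j))$ and conjugating through $\gamma_0$ via \cite[Lemmas~A.5,~A.6,~A.10]{GT12} rewrites this as $\wt{F}_j(\wt{g}^{(1)}(6P_r(y))\wt\Gamma)$ on a nilmanifold $H_1/\wt\Gamma$, where $\wt{g}^{(1)}:=\gamma_0^{-1}g^{(1)}\gamma_0$ and $\wt{F}_j$ has Lipschitz norm $\poly_m(L\delta^{-1})$. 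Crucially, since $G$ is $2$-step nilpotent so that $[G,G_2]\subseteq G_3=\mr{Id}_G$ and $G_2$ is central in $G$, inner automorphisms fix $G_2$ pointwise and $\wt{F}_j$ retains vertical frequency $2\xi$ on $G_2\cap H_1$. Pigeonholing over $j$ produces an index at which \cref{prop:Malcev-bash} applies to $\wt{g}^{(1)}$ with parameter $\delta/\poly_m(L\delta^{-1})$, yielding a factorization $\wt{g}^{(1)}=\wt\eps^{(2)}\wt{g}^{(2)}\wt\gamma^{(2)}$ with $\wt{g}^{(2)}$ taking values in a proper subgroup $H_2\subsetneq H_1$. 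Conjugating back by $\gamma_0$ and absorbing this bounded rational element into the surrounding rational/periodic factors gives $g^{(1)}=\eps^{(2)}g^{(2)}\gamma^{(2)}$ on the original nilmanifold with $g^{(2)}$ still taking values in $H_2$.

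Because $\dim H_{k+1}<\dim H_k$ strictly, the iteration terminates after $t\le m$ rounds at a subgroup $G'=H_t$ on which $\xi|_{G_2\cap H_t}$ is trivial (in the worst case because $G_2\cap H_t$ itself is trivial). Concatenating the chain of factorizations into $g=(\eps^{(1)}\cdots\eps^{(t)})\,g'\,(\gamma^{(t)}\cdots\gamma^{(1)})$ via the standard composition lemmas for polynomial sequences \cite[Lemmas~A.7,~A.11,~A.12]{GT12} gives the desired $g=\eps g'\gamma$, with the stated Lipschitz and periodicity bounds valid on $[\pm\delta^{-2}N]$ in order to absorb the compounded $\poly_m(L\delta^{-1})$ quality losses over $t\le m$ iterations.

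The main obstacle is the bookkeeping required to track how the nilmanifold data — Mal'cev basis, rationality, complexity, and vertical frequency — propagate under conjugation by bounded rational elements and restriction to rational subgroups. The key algebraic observation that makes the iteration well-defined is that $2$-step nilpotency forces $G_2$ to be central, so the vertical frequency hypothesis of \cref{prop:Malcev-bash} can be re-verified on each successively smaller nilmanifold; without this centrality the vertical character could shift under conjugation and the iteration would break down.
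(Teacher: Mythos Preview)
Your proposal is correct and follows essentially the same iterative scheme as the paper: apply \cref{prop:Malcev-bash}, pigeonhole onto a short subprogression where $\gamma$ is constant and $\eps$ is nearly constant, conjugate into the smaller subgroup (using centrality of $G_2$ to preserve the vertical frequency), and repeat until $\xi$ vanishes on $G_2\cap G'$, terminating in at most $m$ steps. The only omission is the trivial case $\xi=0$, where \cref{prop:Malcev-bash} cannot be invoked (it requires a nonzero vertical frequency) but the conclusion holds immediately with $G'=G$ and $\eps=\gamma=\mr{Id}_G$.
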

\begin{proof}
We first handle the trivial case where $\xi=0$. This case is dispatched via setting $G' = G$, $g' = g$, and $\eps$ and $\gamma$ to both be identically $\mr{Id}_G$. 

Otherwise, we iteratively define a sequence of parameters $(\delta_i)$ with $\delta_1^{-1} = \delta^{-1} L$ and $\delta_{i+1}^{-1} = \on{poly}_m(\delta_i^{-1}L)$ and a sequence of rational connected subgroups $(G^{(i)})$ with $G^{(1)} = G$ and $G^{(i)}$ being $\delta_i^{-1}$-rational with respect to $G$. We write $G_j^{(i)}=G_j\cap G^{(i)}$. At each stage, we have the factorization 
\[g = \eps_i g_i \gamma_i\]
with $g_i(0) = \mr{Id}_G$, $g_i$ taking values in $G^{(i)}$, $\eps_i$ satisfying, for $n\in [\pm \delta^{-2}N]$, that $d(\eps_i(n),\eps_i(n-1))\le \delta_i^{-1}N^{-1}$ and $d(\eps_i(n),\mr{Id}_G)\le \delta_i^{-1}$, and $\gamma_i$ being $\delta_i^{-1}$-rational and periodic with period at most $\delta_i^{-1}$. We let $\eps_1=\gamma_1=\mr{Id}_G$ and $g_1=g$ to start.

Now given $i$, we define the next factorization data. If $\xi$ is trivial on $G_2^{(i)}$ then we terminate, providing our desired final factorization. Else, decompose $[\pm T]$ into arithmetic progressions that are $N^{1/2}W^{-1}\on{poly}_m(\delta_iL^{-1})$ in length and with common difference divisible by the period of $\gamma_i$. Then, by the pigeonhole principle, there exists such a progression $Q$ for which 
\[\bigg|\sum_{y\in Q}F(g(6P_r(y))\Gamma)\bigg|\ge \on{poly}_m(\delta_iL^{-1})N^{1/2}W^{-1}.\]
By the smoothness of $\eps_i$, the rationality of $\gamma_i$, and the Lipschitz bound for $F$, there exist group elements $\eps_Q$ and $\gamma_Q$, each of size $\on{poly}_m(\delta_i^{-1}L)$, with $\gamma_Q$ being $\on{poly}_m(\delta_i^{-1}L)$-rational, such that 
\[\bigg|\sum_{y\in Q}F(\eps_Qg_i(6P_r(y))\gamma_Q\Gamma)\bigg|\ge \on{poly}_m(\delta_iL^{-1})N^{1/2}W^{-1}.\]
Note here that $\gamma_Q$ is essentially a ``representative'' for $\gamma_i$ in this modular class that is bounded, and not the value of $\gamma_i$ itself. Such a representative exists, as any group element can be made bounded by right-multiplying by an element of the $\Gamma$ \cite[Lemma~A.14]{GT12} and the product of two rational elements is rational with appropriate height bounds \cite[Lemma~A.11]{GT12}. 

Set $F_i(x) = F(\eps_Q\gamma_Qx)$. Note that $F_i$ is $\on{poly}_m(\delta_i^{-1}L)$-Lipschitz, as left-multiplication by bounded elements approximately preserves the metric \cite[Lemma~A.5]{GT12}. Furthermore, letting $g_i' = \gamma_Q^{-1}g_i\gamma_Q$, we have 
\[\bigg|\sum_{y\in Q}F_i(g_i'(6P_r(y))\Gamma)\bigg|\ge \on{poly}_m(\delta_iL^{-1})N^{1/2}W^{-1}.\]
Since $G^{(i)}$ is a $\on{poly}_m(\delta_i^{-1}L)$-rational subgroup of $G$, the conjugate subgroup $\gamma_Q^{-1}G^{(i)}\gamma_Q$ is similarly rational by \cite[Lemma~A.13]{GT12}. Furthermore, note that $\gamma_Q^{-1}G_2^{(i)}\gamma_Q = G_2^{(i)}$, as $G_2^{(i)}\subseteq G_2$ is in the center of $G$ because we have a degree $2$ filtration on $G$. Therefore, as $\xi$ is nonzero on $\gamma_Q^{-1}G_2^{(i)}\gamma_Q = G_2^{(i)}$, and since $G_2^{(i)}$ being simply connected implies that if $\xi$ is nonzero then $2\cdot \xi$ is nonzero, we can apply \cref{prop:Malcev-bash} to obtain
\[g_i' = \wt{\eps}_{i+1} g_{i+1} \wt{\gamma}_{i+1}\]
where $\wt{\gamma}_{i+1}$ is $\on{poly}_m(\delta_i^{-1}L)$-rational and periodic, $d(\wt{\eps}_{i+1}(n),\wt{\eps}_{i+1}(n-1))\le \on{poly}_m(\delta_i^{-1}L)N^{-1}$ and $d(\wt{\eps}_{i+1}(n),\mr{Id}_G)\le \on{poly}_m(\delta_i^{-1}L)$ for $n\in [\pm \delta^{-2}N]$, and $g_{i+1}$ lives in a subgroup $G^{(i+1)}$ that is $\on{poly}_m(\delta_i^{-1}L)$-rational with respect to $G^{(i)}$. Thus,
\[g_i = \gamma_Q \wt{\eps}_{i+1} g_{i+1} \wt{\gamma}_{i+1} \gamma_Q^{-1}\]
and, so, 
\[g = \eps_i \gamma_Q \wt{\eps}_{i+1} g_{i+1} \wt{\gamma}_{i+1} \gamma_Q^{-1}\gamma_i.\]
Taking $\eps_{i+1} = \eps_i \gamma_Q \wt{\eps}_{i+1}$ and $\gamma_{i+1} = \wt{\gamma}_{i+1} \gamma_Q^{-1}\gamma_i$ completes the iteration. In particular, $\gamma_Q \wt{\eps}_{i+1}$ is seen to be sufficiently smooth as left-multiplication by bounded elements approximately preserves distances \cite[Lemma~A.5]{GT12}, and $\eps_i$ is sufficiently smooth as the product of smooth sequences is sufficiently smooth by \cite[Lemma~10.1]{GT12}. The rationality claims for $\gamma_{i+1}$ follow immediately from \cite[Lemma~A.11,~A.12]{GT12}.

Note that at each step of the iteration we have $\delta_{i+1}^{-1} = \on{poly}_{m}(L\delta_{i}^{-1})$, where the implied constants in $\on{poly}_m(\cdot)$ are absolute. Note also that there are at most $m$ iterations, as each iteration decreases the dimension of $G^{(i)}$ (since the $G'$ produced by \cref{prop:Malcev-bash} is a connected proper subgroup), and therefore we obtain the desired result (up to slightly increasing the implicit constants in the underlying notation). 
\end{proof}

\section{Degree-lowering}\label{sec:deg-lower}
The main purpose of this section is to deduce the following key degree-lowering result. 
\begin{proposition}\label{prop:degree-lower}
Fix a positive integer $k\ge 3$, let $w$, $W$, and $P$ be as in \eqref{eq:Wdef}, and let $\delta\in(0,1/2)$. Let $f_1,f_2,f_3\colon\mathbf{Z}\to\mathbf{C}$ be $1$-bounded functions such that $\on{supp}(f_i)\subseteq [\pm \delta^{-1} N]$ for $i=1,2,3$. Suppose that 
\[\snorm{\mc{D}^{1}(f_2,f_3)}_{U^{k}_{W\cdot[N/W]}}^{2^k}\ge \delta N.\]
Furthermore, suppose that $N\ge W^{\Omega(1)}\cdot \exp(\exp(\delta^{-\Omega_k(1)}))$. Then,
\[\min_{i=2,3}\snorm{f_i}_{U^{k-1}_{W\cdot[N/W]}}^{2^{k-1}}\gg\exp(-\exp(\delta^{-O_k(1)}))\cdot N.\]
\end{proposition}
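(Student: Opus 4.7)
The plan is to run the standard degree-lowering pipeline. First, I would reduce the general $k\ge 3$ case to $k=3$ by \emph{dual-difference interchange} (\cref{lem:dual-interchange}): this is an iterated Cauchy--Schwarz maneuver converting large $U^k$-mass of a dual function into large $U^3$-mass of derivatives of the underlying $f_i$, losing only $\delta^{-O_k(1)}$ at each step. In parallel, a pigeonhole on residues $r\pmod{W}$ restricts the $f_i$ to a single congruence class, replacing $P$ by $P_r(y)=(P(Wy+r)-P(r))/W$ and converting the $U^{k-1}_{W\cdot[N/W]}$-norms into plain $U^{k-1}_{[N/W]}$-norms.

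At $k=3$, I would apply the quantitative Green--Tao $U^3$-inverse theorem to produce a degree-$2$ filtered nilmanifold $G/\Gamma$ of bounded dimension and complexity, together with a polynomial sequence $g\colon\mathbf{Z}\to G$ with $g(0)=\mr{Id}_G$ and a Lipschitz function $F$, such that
\[
\bigg|\sum_{x}F(g(x)\Gamma)\,\mc{D}^1(f_2,f_3)(x)\bigg|\ge\exp(-\delta^{-O(1)})\,N.
\]
A vertical Fourier expansion lets me assume $F$ has a single nonzero vertical frequency $2\cdot\xi$. Following an Altman-style lifting trick (passing to $6x$ on a suitable sub-progression), I apply \cref{lem:dual-expansion} to expand $F(g(6x)\Gamma)$ as $\sum_\alpha\prod_{j=1}^3 F_{j,\alpha}(g(\tau_j(x,P_r(y)))\Gamma)+G(x,P_r(y))$, where the three factors carry vertical frequencies $-9\xi,8\xi,2\xi$ respectively. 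Since $\tau_1(x,y)$ and $\tau_2(x,y)$ are linear in the pair $(x+P_r(y),x+2P_r(y))$, a change of variables followed by Cauchy--Schwarz stashing (peeling off the $\alpha$-th summand) reduces matters to establishing that
\[
\bigg|\mathbf{E}_{y\in[\pm T]}F_{3,\alpha}(g(6P_r(y))\Gamma)\bigg|
\]
is not negligible for some surviving index $\alpha$, where $F_{3,\alpha}$ still carries the vertical frequency $2\cdot\xi$.

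This is precisely the hypothesis of \cref{lem:poly-sequence}. Iterating that factorization until $\xi$ becomes trivial on $G'\cap G_2$, the remaining nilsequence effectively lives on a nilmanifold with trivial degree-$2$ component, and hence is a degree-$1$ object. A further vertical Fourier decomposition reduces the correlation to one against pure linear phases $e(\kappa x)$ on a sub-progression of size $\Omega(N)$. The polynomial identity $x=2(x+P_r(y))-(x+2P_r(y))$, combined with the change of variable $x\mapsto x-P_r(y)$, then rewrites the correlation as a two-point count of the form $\sum_x\mathbf{E}_y\widetilde{f_2}(x)\widetilde{f_3}(x+P_r(y))$, with $\widetilde{f_j}$ differing from $f_j$ by bounded linear phases. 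Standard $U^2$-control for the two-term pattern $(x,x+P_r(y))$ (a Fourier inversion plus Gowers--Cauchy--Schwarz computation in the spirit of S\'ark\"ozy) then forces both $\snorm{f_2}_{U^2_{[N/W]}}$ and $\snorm{f_3}_{U^2_{[N/W]}}$ to be large, which upgrades back to the claimed $U^{k-1}_{W\cdot[N/W]}$-bound after undoing the pigeonhole.

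The main obstacle I expect is the equidistribution input into \cref{lem:poly-sequence}: one must argue that poor equidistribution of the composed sequence $g(6P_r(y))$ yields a factorization of $g(y)$ itself of comparable quality, without bleeding factors of $W$ into the final bound. This is precisely what \cref{prop:Malcev-bash} accomplishes via a Mal'cev-coordinate analysis exploiting $\gcd(W,2Wr+1)=1$, and threading this through the iteration in \cref{lem:poly-sequence}, combined with the quantitative bookkeeping across dual-difference interchange and stashing, produces the doubly-exponential $\exp(-\exp(\delta^{-O_k(1)}))$ bound in the conclusion.
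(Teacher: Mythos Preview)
Your proposal is correct and follows essentially the same route as the paper: dual-difference interchange to reduce to $k=3$, pigeonhole in residues $\bmod\ W$, the $U^3$-inverse theorem with a vertical-frequency reduction, Altman's lifting to the pattern $(6x,6y,3(x+y),2(x+2y))$, \cref{lem:dual-expansion} to factor $F(g(6x)\Gamma)$, isolation of the $y$-average $\mathbf{E}_y F_{3,\alpha}(g(6P_r(y))\Gamma)$, \cref{lem:poly-sequence} to kill the degree-$2$ part, Fourier-expansion of the resulting degree-$1$ nilsequence, and finally \cref{lem:sarkozy} after the identity $x=2(x+P_r(y))-(x+2P_r(y))$. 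One small point: the paper isolates the $y$-average not by ``stashing'' but by Fourier inversion and Parseval in the $x$-variable (so the average is actually $\mathbf{E}_y F_3(g(6P_r(y))\Gamma)\,e(\beta P_r(y))$ with a linear twist, which is then absorbed into $g$ via the nonzero vertical frequency); this is a minor terminological/technical distinction and does not affect the argument.
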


We will also require the following variant of the above result; the proof is identical, just replacing the polynomial $P(y) = Wy^2 + y$ with $-Wy^2-y$. 
\begin{proposition}\label{prop:degree-lower-2}
Fix a positive integer $k\ge 3$, and let $w$, $W$, and $P$ be as in \eqref{eq:Wdef}, and let $\delta\in (0,1/2)$. Let $f_1,f_2,f_3\colon\mathbf{Z}\to\mathbf{C}$ be $1$-bounded functions such that $\on{supp}(f_i)\subseteq [\delta^{-1} N]$ for $i=1,2,3$. Suppose that 
\[\snorm{\mc{D}^{3}(f_1,f_2)}_{U^{k}_{W\cdot[N/W]}}^{2^k}\ge \delta N.\]
Furthermore, suppose that $N\ge W^{\Omega(1)}\cdot \exp(\exp(\delta^{-\Omega_k(1)}))$. Then,
\[\min_{i=1,2}\snorm{f_i}_{U^{k-1}_{W\cdot[N/W]}}^{2^{k-1}}\gg\exp(-\exp(\delta^{-O_k(1)}))\cdot N.\]
\end{proposition}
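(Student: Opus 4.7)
The plan is to deduce \cref{prop:degree-lower-2} directly from \cref{prop:degree-lower} via a reflection $x \mapsto -x$, which interchanges the roles of $\mc{D}^1$ and $\mc{D}^3$ (and effectively replaces $P$ by $-P$ in the dual). Setting $\tilde{f}_i(x) := f_i(-x)$ for $i = 1, 2$, an elementary substitution inside the expectation gives
\[
\mc{D}^3(f_1, f_2)(-x) = \mathbf{E}_{y \in [\pm M]} \tilde{f}_1(x + 2P(y))\, \tilde{f}_2(x + P(y)) = \mc{D}^1(\tilde{f}_2, \tilde{f}_1)(x),
\]
so $\mc{D}^3(f_1, f_2)$ is, up to a reflection of the ambient variable, exactly a $\mc{D}^1$-type dual of reflected inputs.

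Next I would verify that the box norm $U^k_{W \cdot [N/W]}$ is invariant under the reflection $g \mapsto g(-\cdot)$. A translation of the outer variable $x$ in $\snorm{g}_{U^k_Q}^{2^k}$ makes the integrand depend on the shift parameters only through the differences $h_i' - h_i$, and since $h_i, h_i'$ are drawn independently from the same set $Q$ the joint distribution of these differences is symmetric under global sign change; hence $\snorm{g}_{U^k_Q} = \snorm{g}_{U^k_{-Q}}$. A direct change of variables then gives $\snorm{g(-\cdot)}_{U^k_Q} = \snorm{g}_{U^k_{-Q}} = \snorm{g}_{U^k_Q}$. Combined with the identity above, this yields
\[
\snorm{\mc{D}^3(f_1, f_2)}_{U^k_{W \cdot [N/W]}} = \snorm{\mc{D}^1(\tilde{f}_2, \tilde{f}_1)}_{U^k_{W \cdot [N/W]}}.
\]

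The support hypothesis $\supp(f_i) \subseteq [\delta^{-1} N]$ gives $\supp(\tilde{f}_i) \subseteq [-\delta^{-1}N, 0] \subseteq [\pm \delta^{-1}N]$, so \cref{prop:degree-lower} applies to $(\tilde{f}_2, \tilde{f}_1)$ playing the roles of $(f_2, f_3)$ there (augmented by any dummy $1$-bounded first argument, which does not enter the conclusion). This produces $\min_{i=1,2} \snorm{\tilde{f}_i}_{U^{k-1}_{W \cdot [N/W]}}^{2^{k-1}} \gg \exp(-\exp(\delta^{-O_k(1)})) \cdot N$, and one more application of the reflection invariance of the box norm converts each $\tilde{f}_i$ back to $f_i$, yielding the claim. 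The main point requiring care is the reflection invariance of the box norm, since $W \cdot [N/W]$ is not symmetric about $0$; alternatively, as the paper remarks, one may simply rerun the proof of \cref{prop:degree-lower} with $P(y)$ replaced by $-P(y) = -Wy^2 - y$, checking that the Weyl-type exponential-sum manipulations and the nilmanifold factorization from \cref{sec:nil-main,sec:deg-lower} are insensitive to this sign flip (in particular, the coprimality conditions $\gcd(W, 2Wr+1) = 1$ used in \cref{prop:Malcev-bash} still hold for $-P$).
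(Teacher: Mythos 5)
Your proof is correct, and it takes a genuinely different route from the paper's. The paper disposes of \cref{prop:degree-lower-2} with a one-line remark that the proof of \cref{prop:degree-lower} goes through verbatim with $P$ replaced by $-P=-Wy^2-y$; this in principle means re-running the whole degree-lowering argument with the sign flipped and re-checking the Weyl-type and coprimality computations throughout \cref{sec:nil-main,sec:deg-lower} (as you observe at the end, the crucial $\gcd(W,2Wr+1)=1$ is unaffected). You instead give a formal black-box reduction of \cref{prop:degree-lower-2} to \cref{prop:degree-lower} via the reflection $x\mapsto -x$. The identity $\mc{D}^3(f_1,f_2)(-x)=\mc{D}^1(\tilde f_2,\tilde f_1)(x)$ with $\tilde f_i:=f_i(-\cdot)$ is correct, as is the reflection invariance of the box norm $\snorm{\cdot}_{U^k_Q}$: after centering the outer variable the integrand depends only on the differences $h_i'-h_i$, whose joint law is symmetric under global sign change because the $h_i,h_i'$ are i.i.d.\ uniform on $Q$, and hence $\snorm{g}_{U^k_Q}=\snorm{g}_{U^k_{-Q}}=\snorm{g(-\cdot)}_{U^k_Q}$. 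The support bookkeeping $[\delta^{-1}N]\mapsto[-\delta^{-1}N,0]\subseteq[\pm\delta^{-1}N]$ is fine, $f_1$ is indeed a dummy in \cref{prop:degree-lower}, and a second application of reflection invariance at the $U^{k-1}$ level converts $\tilde f_i$ back to $f_i$ in the conclusion. Your reduction is shorter and more robust in that it uses \cref{prop:degree-lower} purely as a black box and avoids re-auditing the nilmanifold and circle-method computations; the only overhead is recording the (elementary) symmetry lemma for the box norm, which the paper's approach does not need.
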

\begin{remark*}
The methods in this paper do \emph{not} prove the analogous statement for $\mc{D}^{2}(f_3,f_1)$, as our methods do not prove the needed statement corresponding to \cref{lem:constraint}. By symmetry, the constraints required for $\mc{D}^{1}(f_2,f_3)$ and $\mc{D}^{3}(f_1,f_2)$ are identical.
\end{remark*}

\subsection{\texorpdfstring{$U^2$}{U2}-control for S\'{a}rk\"{o}zy-type configurations}
We first require $U^2$-control for S\'{a}rk\"{o}zy-type configurations. The proof we give is identical to that of Green \cite[Section~3]{Gre02}, modulo standard circle method computations that we place in \cref{sec:circle}. 

\begin{lemma}\label{lem:sarkozy}
There exists a constant $c = c_{\ref{lem:sarkozy}}>0$ such that the following holds. Let $W$ be as in \eqref{eq:Wdef} with $W\le N^{c}$ and let $f_i\colon\mathbf{Z}\to\mathbf{C}$ be $1$-bounded with $\on{supp}(f_i)\subseteq [\pm \delta^{-1} N]$. Define 
\[P_k(y) = \frac{P(Wy + k) - P(k)}{W}\]
for $k\in [W]$, and suppose that 
\[\bigg|\sum_{x\in \mathbf{Z}}\mathbf{E}_{y\in [\pm N^{1/2}W^{-1}]}f_1(x+P_k(y))f_2(x+2P_k(y))\bigg|\ge \delta N.\]
Then,
\[\min_{i\in \{1,2\}}\sup_{\Theta\in \mathbf{T}}|\wh{f_i}(\Theta)|\gg \delta^{O(1)}N.\]
\end{lemma}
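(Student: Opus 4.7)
The plan is to proceed by Fourier analysis, following Green \cite{Gre02}. After the substitution $x\mapsto x-P_k(y)$ (for each fixed $y$), the hypothesis rewrites as
\[\bigg|\mathbf{E}_{y\in[\pm N^{1/2}W^{-1}]}\sum_{x\in\mathbf{Z}}f_1(x)f_2(x+P_k(y))\bigg|\ge\delta N.\]
Applying Fourier inversion to $f_1,f_2$, swapping the order of summation and integration, and using orthogonality of characters on $\mathbf{Z}$, the left-hand side equals
\[\bigg|\int_\mathbf{T}\wh{f_1}(\eta)\,\wh{f_2}(-\eta)\,\wh{\mu}_{P_k}(\eta)~d\eta\bigg|,\]
where $\wh{\mu}_{P_k}(\eta):=\mathbf{E}_{y\in[\pm N^{1/2}W^{-1}]}e(-P_k(y)\eta)$ is the exponential sum attached to the polynomial $P_k(y)=W^2y^2+(2Wk+1)y$ (obtained by expanding the definition of $P_k$).

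Next, I would invoke the exponential-sum estimates of \cref{sec:circle} to split $\mathbf{T}$ into major and minor arcs. Define $\mathfrak{M}_Q:=\bigcup_{q\le Q}\bigcup_{(a,q)=1}\{\eta\colon\snorm{\eta-a/q}_\mathbf{T}\le Q/N\}$ at level $Q:=\delta^{-C_0}$, so that $|\mathfrak{M}_Q|\ll Q^3/N$. On the minor-arc complement $\mathbf{T}\setminus\mathfrak{M}_Q$, the Weyl-type bound from \cref{sec:circle} gives $|\wh{\mu}_{P_k}(\eta)|\le\delta^{100}$ once $C_0$ is chosen sufficiently large and $W\le N^{c}$ for a small absolute constant $c$. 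Splitting the integral accordingly, Parseval controls the minor-arc piece:
\[\bigg|\int_{\mathbf{T}\setminus\mathfrak{M}_Q}\wh{f_1}(\eta)\,\wh{f_2}(-\eta)\,\wh{\mu}_{P_k}(\eta)~d\eta\bigg|\le\delta^{100}\snorm{f_1}_{\ell^2}\snorm{f_2}_{\ell^2}\ll\delta^{99}N,\]
which is negligible compared to $\delta N$ for $\delta$ smaller than an absolute constant. Hence the major arcs contribute the bulk:
\[\delta N/2\le\bigg|\int_{\mathfrak{M}_Q}\wh{f_1}(\eta)\,\wh{f_2}(-\eta)\,\wh{\mu}_{P_k}(\eta)~d\eta\bigg|\le\sup_{\eta\in\mathbf{T}}|\wh{f_1}(\eta)|\cdot|\mathfrak{M}_Q|^{1/2}\snorm{f_2}_{\ell^2},\]
using $|\wh{\mu}_{P_k}|\le 1$, pulling out $\sup|\wh{f_1}|$, and applying Cauchy--Schwarz. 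Rearranging yields $\sup_\eta|\wh{f_1}(\eta)|\gg\delta^{O(1)}N$. Repeating the argument but pulling out $\sup|\wh{f_2}(-\eta)|=\sup_\eta|\wh{f_2}(\eta)|$ in place of $\sup|\wh{f_1}|$ produces the symmetric bound for $\wh{f_2}$, so $\min_{i\in\{1,2\}}\sup_{\eta\in\mathbf{T}}|\wh{f_i}(\eta)|\gg\delta^{O(1)}N$.

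The main obstacle is ensuring that the minor-arc Weyl bound for $\wh{\mu}_{P_k}$ is genuinely independent of $W$ (up to slowly growing losses). A naive application of Weyl's inequality to $P_k(y)=W^2y^2+(2Wk+1)y$ would sacrifice factors of $W$, because the leading coefficient is $W^2$ while the summation window $[\pm N^{1/2}/W]$ has length only $N^{1/2}/W$. The remedy, mirroring the fraction-comparison reasoning that appears in the proof of \cref{prop:Malcev-bash}, is to exploit the coprimality $\gcd(W^2,2Wk+1)=1$: by comparing the rational approximations produced by the $y$- and $y^2$-coefficients of $P_k$, one can recover a Weyl-type estimate whose minor-arc threshold $Q$ depends polynomially only on $\delta^{-1}$ provided $W\le N^{c}$. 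This delicate analytic point is precisely what is isolated in \cref{sec:circle}.
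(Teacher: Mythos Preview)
Your argument is correct and arrives at the same conclusion, but the route differs from the paper's. After the identical Fourier setup, the paper estimates the integral $\int_{\mathbf{T}}\wh{f_1}\,\wh{f_2}\,\wh{F}$ by pulling out $\sup|\wh{f_1}|^{1/3}$ and applying H\"older with exponents $(3,2,6)$, feeding in the $L^6$ moment bound $\int_{\mathbf{T}}|\wh{F}|^6\ll N^2W^{-6}$ of \cref{lem:L6-bound}. That $L^6$ bound in turn is proved via a major/minor arc split together with the major-arc asymptotic and Gauss-sum estimates of \cref{lem:gauss,lem:major-arc-asym}. You instead perform the major/minor split directly on the integral: an $L^\infty$ bound for $\wh{\mu}_{P_k}$ on the minor arcs, and Cauchy--Schwarz together with the small Lebesgue measure of the major arcs on the complement.

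Your approach is arguably more elementary: it never needs the major-arc asymptotic or any Gauss-sum evaluation, only the measure of the major arcs. The price is that the $W$-free $L^\infty$ minor-arc bound you invoke is not packaged as a black box in \cref{sec:circle}; \cref{lem:Weyl} gives only the leading coefficient, so you must use the two-coefficient version of Weyl (e.g.\ \cite[Proposition~4.3]{GT12}, as in the proof of \cref{lem:Linfinity}) and then run the coprimality fraction-comparison argument you describe to kill the $W^2$ in the denominator. That argument is valid---it is essentially the same computation as in the proofs of \cref{prop:Malcev-bash} and \cref{lem:Linfinity}---but your final sentence slightly overstates what \cref{sec:circle} literally isolates. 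The paper's $L^6$ route has the advantage of being a single reusable moment bound, at the cost of carrying along the full major-arc machinery.
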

\begin{proof}
We have 
\[\bigg|\sum_{x\in \mathbf{Z}}\sum_{y\in[\pm N^{1/2}W^{-1}]}f_1(x+P_k(y))f_2(x+2P_k(y))\bigg|\ge \delta N^{3/2}W^{-1}.\]
Let $F(t)$ denote the indicator of the set $\{P_k(y)\colon y\in [\pm N^{1/2}W^{-1}]\}$\footnote{Note that $P(y_1) = P(y_2)$ implies $(y_1-y_2)(W(y_1+y_2)+1) = 0$. Therefore, every element in the set occurs with multiplicity $1$.}, and thus we have
\[\bigg|\sum_{x\in \mathbf{Z}}\sum_{y\in \mathbf{Z}}f_1(x+t)f_2(x+2t)F(t)\bigg|\ge \delta N^{3/2}W^{-1}.\]
Applying Fourier inversion, this is equivalent to 
\[\bigg|\int_{\mathbf{T}}\wh{f_1}(\Theta)\wh{f_2}(-\Theta)\wh{F}(\Theta)~d\Theta\bigg|\ge \delta N^{3/2}W^{-1}.\]
We now prove the result for $i = 1$; the result for $i = 2$ is analogous. Note that
\begin{align*}
\delta N^{3/2}W^{-1} &\le \bigg|\int_{\mathbf{T}}\wh{f_1}(\Theta)\wh{f_2}(-\Theta)\wh{F}(\Theta)~d\Theta\bigg|\le \sup_{\Theta\in \mathbf{T}}|\wh{f_1}(\Theta)|^{1/3}\cdot\int_{\mathbf{T}}|\wh{f_1}(\Theta)|^{2/3}|\wh{f_2}(-\Theta)||\wh{F}(\Theta)|~d\Theta\\
&\le \sup_{\Theta\in \mathbf{T}}|\wh{f_1}(\Theta)|^{1/3}\cdot\bigg(\int_{\mathbf{T}}|\wh{f_1}(\Theta)|^{2}~d\Theta\bigg)^{1/3}\bigg(\int_{\mathbf{T}}|\wh{f_2}(-\Theta)|^{2}~d\Theta\bigg)^{1/2}\bigg(\int_{\mathbf{T}}|\wh{F}(\Theta)|^{6}~d\Theta\bigg)^{1/6}\\
&\ll \delta^{-O(1)}N^{5/6}\sup_{\Theta\in \mathbf{T}}|\wh{f_1}(\Theta)|^{1/3}\bigg(N^{2}W^{-6}\bigg)^{1/6}\\
&\ll \delta^{-O(1)}N^{7/6}W^{-1}\sup_{\Theta\in \mathbf{T}}|\wh{f_1}(\Theta)|^{1/3},
\end{align*}
where we have used \cref{lem:L6-bound} (with $N$ replaced by $N^{1/2}W^{-1}$) to bound the $L^6$-norm of $\wh{F}$.
\end{proof}

\subsection{Dual-difference interchange}
The version of dual-difference interchange we use is a minor variant of \cite[Lemma~7.4]{Pel20}; we include a proof for completeness. 
\begin{lemma}\label{lem:dual-interchange}
Consider a $1$-bounded function $f\colon\mathbf{Z}\times S\to\mathbf{C}$ such that $\supp{f(\cdot,y)}\subseteq [-CN,CN]$ for all $y\in S$, and integers $T_1, T_2$ such that $T_1\cdot T_2\le CN$. Set $F(x) := \mathbf{E}_{y\in S}f(x,y)$, fix integers $1\le \ell\le k$, and suppose that 
\[\sum_{x\in \mathbf{Z}}\mathbf{E}_{\substack{h_i,h_i'\in T_1 \cdot [T_2]\\1\le i\le k}}\Delta'_{(h_i,h_i')_{i=1}^{k}}F(x)\ge \delta N.\]
Then, we have that 
\[\mathbf{E}_{\substack{h_i,h_i'\in T_1 \cdot [T_2] \\ 1\le i\le \ell}}\snorm{\mathbf{E}_{y\in S}\Delta'^{(x)}_{(h_i,h_i')_{i=1}^{\ell}}f(x,y)}_{U_{T_1 \cdot [T_2]}^{k-\ell}}^{2^{k-\ell}} \gg (C^{-1}\delta)^{O_k(1)}N.\]
\end{lemma}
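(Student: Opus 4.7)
The plan is iterative Cauchy--Schwarz (Jensen), collapsing the $y$-dependence one derivative coordinate at a time. Expanding $F(x) = \mathbf{E}_{y\in S} f(x,y)$ in the hypothesis and unfolding the $\Delta'$ operators yields
\[L := \sum_x \mathbf{E}_{\vec{h},\vec{h}'} \mathbf{E}_{(y_\epsilon) \in S^{2^k}} \prod_{\epsilon \in \{0,1\}^k} \mathcal{C}^{c(\epsilon)} f\Big(x + \sum_{i=1}^k h_i^{\epsilon_i},\, y_\epsilon\Big) \ge \delta N,\]
where $\vec{h} = (h_1, \ldots, h_k)$, $\vec{h}' = (h_1', \ldots, h_k')$, $h_i^0 = h_i$, $h_i^1 = h_i'$, the $y_\epsilon$ are i.i.d.\ uniform on $S$, $\mathcal{C}$ is complex conjugation, and $c\colon\{0,1\}^k \to \{0,1\}$ is the sign function determined by the definition of $\Delta'$; its crucial property is that flipping any single $\epsilon_i$ toggles $c(\epsilon)$. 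For $j \in \{0, 1, \ldots, \ell\}$, set
\[L_j := \sum_x \mathbf{E}_{\vec{h},\vec{h}'} \mathbf{E}_{(y_\alpha) \in S^{2^{k-j}}} \prod_\epsilon \mathcal{C}^{c(\epsilon)} f\Big(x + \sum_i h_i^{\epsilon_i},\, y_{\epsilon_{>j}}\Big),\]
where $\epsilon_{>j} := (\epsilon_{j+1}, \ldots, \epsilon_k)$, so the $y$-dependence factors through only the last $k-j$ coordinates. Clearly $L_0 = L$, and a direct unpacking of the $U^{k-\ell}_{T_1 \cdot [T_2]}$-norm shows that $L_\ell$ equals the right-hand side of the lemma.

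The main step is to show $L_{j-1} \le L_j$ for each $1 \le j \le \ell$. Grouping the $2^k$ factors in the product by the value of $\epsilon_j$ and using that flipping $\epsilon_j$ toggles $c(\epsilon)$, the $\epsilon_j = 1$ part equals the complex conjugate of the $\epsilon_j = 0$ part (after substituting $h_j \mapsto h_j'$ and the appropriate $y$-variables). In $L_{j-1}$, the $y$'s split into two independent families $(y_{(0,\alpha)})_\alpha$ and $(y_{(1,\alpha)})_\alpha$, so we may factor: letting $Q(h; x, \vec{h}_{\ne j}, \vec{h}'_{\ne j}, (y_\alpha))$ denote the $\epsilon_j = 0$ product with $h_j$ replaced by $h$ and $y_{(0,\alpha)}$ by $y_\alpha$, we obtain
\[L_{j-1} = \sum_x \mathbf{E}_{\vec{h}_{\ne j},\vec{h}'_{\ne j}} \Big|\mathbf{E}_{h,(y_\alpha)} Q(h; \ldots, (y_\alpha))\Big|^2 \quad\text{and}\quad L_j = \sum_x \mathbf{E}_{\vec{h}_{\ne j},\vec{h}'_{\ne j}} \mathbf{E}_{(y_\alpha)} \Big|\mathbf{E}_h Q(h; \ldots, (y_\alpha))\Big|^2.\]
The pointwise Cauchy--Schwarz bound $|\mathbf{E}_{(y_\alpha)} V|^2 \le \mathbf{E}_{(y_\alpha)} |V|^2$ applied to $V((y_\alpha)) := \mathbf{E}_h Q(h; \ldots, (y_\alpha))$ yields $L_{j-1} \le L_j$.

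Iterating gives $L_\ell \ge L_{\ell-1} \ge \cdots \ge L_0 \ge \delta N$, which is the desired conclusion (in fact with no loss in the constants, stronger than the stated bound $\gg (C^{-1}\delta)^{O_k(1)} N$). The main (essentially only) technical obstacle is the combinatorial bookkeeping: verifying the factorization by $\epsilon_j$ produces the claimed structure, and checking that $L_\ell$ coincides with the unfolded right-hand side of the lemma. Notably, the support hypothesis on $f$ and the constraint $T_1 T_2 \le CN$ play no role in this argument; they presumably feature elsewhere in the application of the lemma.
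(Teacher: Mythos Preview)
Your proof is correct, and in fact sharper than the paper's: you obtain $L_\ell\ge\delta N$ with no loss, whereas the paper only claims $\gg(C^{-1}\delta)^{O_k(1)}N$. Both arguments are iterative Cauchy--Schwarz collapsing one coordinate of $y$-dependence per step, but the implementations differ. The paper squares the sum and applies Cauchy--Schwarz over all variables except $h_k'$, bounding one factor trivially by $\ll_k CN$ using the support hypothesis $\supp f(\cdot,y)\subseteq[-CN,CN]$ and the constraint $T_1T_2\le CN$; this duplicates $h_k'$ at the cost of a factor $CN$ per iteration. You instead recognize that $L_{j-1}$ and $L_j$ both have the form $\sum_x\mathbf{E}_{\vec h_{\ne j},\vec h'_{\ne j}}[\cdots]$ with the bracket being $|\mathbf{E}_{(y_\alpha)}V|^2$ versus $\mathbf{E}_{(y_\alpha)}|V|^2$ respectively, so Jensen gives $L_{j-1}\le L_j$ directly. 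Your observation that the support and $T_1T_2\le CN$ hypotheses play no role is therefore accurate for your argument; they are used in the paper only to control the ``trivial'' factor after Cauchy--Schwarz.
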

\begin{proof}
The proof is exactly as in \cite[Lemma~7.4]{Pel20}, noting that the properties of the dual function are used only in the form of $F$ given above. For the computation below, let $\vec{h} = (h_1,\ldots,h_{k-1})$ and $\vec{h}' = (h_1',\ldots,h_{k-1}')$, and let $\mc{C}^t$ denote complex conjugation $t$ times (which depends only on the parity of $t$). We have, using Cauchy--Schwarz to duplicate $h_k'$ in the middle,
\begin{align*}
&\bigg(\sum_{x\in \mathbf{Z}}\mathbf{E}_{\substack{h_i,h_i'\in T_1 \cdot [T_2] \\ 1\le i\le k}}\Delta'_{(h_i,h_i')_{i=1}^{k}}F(x)\bigg)^2\\
&=\bigg(\mathbf{E}_{\substack{y_{\omega0},y_{\omega1}\in S\\\omega\in \{0,1\}^{k-1}}}\sum_{x\in \mathbf{Z}}\mathbf{E}_{\substack{h_i,h_i'\in T_1 \cdot [T_2] \\ 1\le i\le k}}\prod_{\omega\in \{0,1\}^{k-1}}\mc{C}^{|\omega|-1}(f(x + \vec{h}\cdot \omega + \vec{h'}\cdot (1-\omega) + h_k ,y_{\omega0})\\
&\qquad\qquad\qquad\qquad\qquad\qquad\qquad\qquad\qquad\qquad\times \ol{f(x + \vec{h}\cdot \omega + \vec{h'}\cdot (1-\omega) + h_k' ,y_{\omega1})})\bigg)^2\\
&\le\bigg(\mathbf{E}_{\substack{y_{\omega0},y_{\omega1}\in S\\\omega\in \{0,1\}^{k-1}}}\sum_{x\in \mathbf{Z}}\mathbf{E}_{\substack{h_i,h_j'\in T_1 \cdot [T_2]\\1\le i\le k\\1\le j\le k-1}}\prod_{\omega\in \{0,1\}^{k-1}}|f(x + \vec{h}\cdot \omega + \vec{h'}\cdot (1-\omega) + h_k ,y_{\omega0})|^2\bigg)\\
&\ \ \cdot \bigg(\mathbf{E}_{\substack{y_{\omega0},y_{\omega1}\in S\\\omega\in \{0,1\}^{k-1}}}\sum_{x\in \mathbf{Z}}\mathbf{E}_{\substack{h_i,h_j'\in T_1 \cdot [T_2]\\1\le i\le k\\1\le j\le k-1}}\mathbf{E}_{h_{k,1}',h_{k,2}'\in T_1\cdot [T_2]}\prod_{\omega\in \{0,1\}^{k-1}}\mc{C}^{|\omega|-1}(f(x + \vec{h}\cdot \omega + \vec{h'}\cdot (1-\omega) + h_{k,1}' ,y_{\omega1})\\
&\qquad\qquad\qquad\qquad\qquad\qquad\qquad\qquad\qquad\qquad\qquad\qquad\qquad\quad \times\ol{f(x + \vec{h}\cdot \omega + \vec{h'}\cdot (1-\omega) + h_{k,2}' ,y_{\omega1})})\bigg)\\
&\ll CN\cdot \bigg(\mathbf{E}_{\substack{y_{\omega0},y_{\omega1}\in S\\\omega\in \{0,1\}^{k-1}}}\sum_{x\in \mathbf{Z}}\mathbf{E}_{\substack{h_i,h_i'\in T_1 \cdot [T_2] \\ 1\le i\le k}}\prod_{\omega\in \{0,1\}^{k-1}}\mc{C}^{|\omega|-1}\Delta'^{(x)}_{(h_k,h_k')}f(x + \vec{h}\cdot \omega + \vec{h'}\cdot (1-\omega) ,y_{\omega1})\bigg)\\
&\ll CN\cdot \bigg(\mathbf{E}_{\substack{y_{\omega}\in S\\\omega\in \{0,1\}^{k-1}}}\sum_{x\in \mathbf{Z}}\mathbf{E}_{\substack{h_i,h_i'\in T_1 \cdot [T_2]\\1\le i\le k}}\prod_{\omega\in \{0,1\}^{k-1}}\mc{C}^{|\omega|-1}\Delta'^{(x)}_{(h_k,h_k')}f(x + \vec{h}\cdot \omega + \vec{h'}\cdot (1-\omega),y_{\omega})\bigg).
\end{align*}
The result follows by replacing replacing $f$ by $\Delta'^{(x)}_{(h_k,h_k')}f$ and applying iterating, for a total of $\ell$ times. We use that $T_1 \cdot T_2$ is smaller than $CN$ in order to guarantee appropriate support conditions and bounds.
\end{proof}

\subsection{Hensel's lemma}
We will also require an elementary result number-theoretic result; this is ultimately why the $W$-trick can be used to treat arithmetic progressions with common difference of the form $y^2-1$, but not $y^2$. 
\begin{proposition}\label{prop:Hensel}
Let $Q(y) = ay^2 + by$ and fix a prime $p$ such that $p\mid a$ but $p\nmid b$. Then, for all $k\ge 1$, $Q(y)$ gives a bijective map $\mathbf{Z}/p^k\mathbf{Z}\to\mathbf{Z}/p^k\mathbf{Z}$.  
\end{proposition}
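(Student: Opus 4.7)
The plan is to exploit the fact that $\mathbf{Z}/p^k\mathbf{Z}$ is finite, so bijectivity of $Q$ follows from injectivity (or, equivalently, from surjectivity). I would proceed by injectivity and work directly with the algebraic factorization
\[
Q(y_1) - Q(y_2) = a(y_1^2-y_2^2) + b(y_1-y_2) = (y_1 - y_2)\bigl(a(y_1+y_2)+b\bigr),
\]
aiming to show the second factor is always a unit modulo $p^k$, so that the congruence $Q(y_1) \equiv Q(y_2) \pmod{p^k}$ forces $y_1 \equiv y_2 \pmod{p^k}$.

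The key step is the observation that, since $p \mid a$, we have $a(y_1+y_2)+b \equiv b \pmod p$, and the hypothesis $p \nmid b$ then makes this residue a unit of $\mathbf{Z}/p\mathbf{Z}$. Standard lifting shows units modulo $p$ remain units modulo $p^k$, so $a(y_1+y_2)+b$ admits a multiplicative inverse in $\mathbf{Z}/p^k\mathbf{Z}$. Multiplying both sides of $(y_1-y_2)(a(y_1+y_2)+b)\equiv 0 \pmod{p^k}$ by this inverse yields $y_1 \equiv y_2 \pmod{p^k}$, giving injectivity, and hence bijectivity by finiteness.

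An alternative route, should I prefer a surjectivity argument, is via Hensel's lemma: one checks $Q'(y) = 2ay+b \equiv b \not\equiv 0 \pmod p$ (valid for all $p$, including $p=2$, since the hypothesis $p\mid a$ makes $2ay$ vanish mod $p$), and notes that $Q(y) \equiv by \pmod p$ is already a bijection on $\mathbf{Z}/p\mathbf{Z}$ because $b$ is invertible there. Hensel's lemma then lifts the unique mod-$p$ preimage of each target to a unique preimage mod $p^k$.

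I do not anticipate any real obstacle; the argument fits on a single line once the factorization is written down. The only mildly delicate point — and presumably the reason the proposition is stated explicitly — is the asymmetric use of the two hypotheses: $p \mid a$ is what removes the $a(y_1+y_2)$ term modulo $p$, and $p \nmid b$ is what keeps the remaining linear part invertible. This is precisely the number-theoretic reflection of why $P(y) = Wy^2 + y$ behaves well under the $W$-trick while a pure square $Wy^2$ would not.
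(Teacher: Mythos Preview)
Your proposal is correct. Your primary route via the factorization $Q(y_1)-Q(y_2)=(y_1-y_2)\bigl(a(y_1+y_2)+b\bigr)$ and the observation that the second factor is a unit modulo $p^k$ is more elementary than the paper's proof, which takes exactly your alternative route: the paper notes that $Q$ reduces to a nontrivial linear map modulo $p$, that $Q'(y)=2ay+b$ is nonzero modulo $p$, and then invokes Hensel's lemma. Your factorization argument has the minor advantage of being self-contained (no appeal to Hensel needed), while the paper's approach makes explicit the connection to Hensel that motivates the proposition's name; substantively the two are equivalent, and you have correctly identified both.
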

\begin{proof}
Note that for $k = 1$ this is immediate, as $P(y)$ reduces to a nontrivial linear function on $\mathbf{Z}/p\mathbf{Z}$. Furthermore, note that $P'(y) = 2ay + b$ is always nonzero when viewed modulo $p$. Therefore, the desired result follows from Hensel's lemma. 
\end{proof}

\subsection{Completing the proof of \texorpdfstring{\cref{prop:degree-lower}}{Proposition 6.1}}
Before proceeding with the main proof, we require the $U^{3}$-inverse theorem. The result stated follows by embedding the interval $[N]$ into a slightly larger cyclic group and using the $U^3$-inverse theorem of Green and Tao \cite[Theorem~12.8]{GT08b}\footnote{Note that the theorem stated in \cite[Theorem~12.8]{GT08b} produces correlation of a shifted version of $f$ with a nilsequence but, as remarked after the theorem, the shift can be removed.}. We give a brief deduction of the inverse theorem stated below from \cite[Theorem~12.8]{GT08b}, since the definition of $U^3$-norm we use is slightly different from the standard version. 
\begin{theorem}\label{thm:U^3}
Suppose that $f\colon\mathbf{Z}\to\mathbf{C}$ is a $1$-bounded function such that $\on{supp}(f)\subseteq [\pm N]$ and 
\[\snorm{f}_{U^{3}_{[5N]}}^{8}\ge \delta N.\]
Then, there exists a degree $2$ nilmanifold $G/\Gamma$ with dimension $\delta^{-O(1)}$ and complexity $\exp(\delta^{-O(1)})$, a function $F\colon G/\Gamma\to\mathbf{C}$ with $\snorm{F}_{\mr{Lip}}\le \exp(\delta^{-O(1)})$, and a polynomial sequence $g\colon\mathbf{Z}\to G$ such that 
\[\bigg|\sum_{n\in\mathbf{Z}} f(n) F(g(n)\Gamma)\bigg|\ge \exp(-\delta^{O(1)})N.\]
\end{theorem}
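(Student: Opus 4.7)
The plan is to reduce the theorem to the Green--Tao $U^3$-inverse theorem \cite[Theorem~12.8]{GT08b}, which is stated for cyclic groups $\mathbf{Z}/N'\mathbf{Z}$ with the standard (normalized) $U^3$-norm. The two substantive tasks are: (i) converting the unnormalized box-norm hypothesis of \cref{def:box-norm} into a bound for the standard $U^3(\mathbf{Z}/N'\mathbf{Z})$-norm after embedding $f$ into a slightly larger cyclic group, and (ii) lifting the conclusion back to $\mathbf{Z}$ while absorbing the shift-in-argument that appears in the cited theorem.

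For task (i), I would carry out the standard change of variables $y := x + h_1 + h_2 + h_3$ and $k_i := h_i' - h_i$ inside the definition of $\|f\|_{U^3_{[5N]}}^8$. Counting the number of $(x,h_i,h_i')$-preimages of each pair $(y,k)$ yields a Fej\'er-kernel-weighted form of the standard Gowers cube product,
\begin{equation*}
    \|f\|_{U^3_{[5N]}}^8 \;=\; \sum_{y\in\mathbf{Z}}\sum_{k\in\mathbf{Z}^3}\biggl(\prod_{\omega\in\{0,1\}^3}\mathcal{C}^{|\omega|}f(y+\omega\cdot k)\biggr)\prod_{i=1}^{3}\mu_{5N}(k_i),
\end{equation*}
up to a harmless overall complex conjugation (where $\mathcal{C}$ is complex conjugation and $\mu_{5N}$ is the Fej\'er kernel from \eqref{eq:fejer}). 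Setting $N' := 20N$ and extending $f$ by zero to $\mathbf{Z}/N'\mathbf{Z}$, the pointwise bound $\mu_{5N}\le 1/(5N)$ together with the support constraint $\supp(f)\subseteq[\pm N]$ forces $|k_i|\le 2N < N'/2$ (so no wraparound occurs), yielding
\begin{equation*}
    \|f\|_{U^3_{[5N]}}^8 \;\le\; (5N)^{-3}(N')^{4}\,\|f\|_{U^3(\mathbf{Z}/N'\mathbf{Z})}^{8},
\end{equation*}
and hence $\|f\|_{U^3(\mathbf{Z}/N'\mathbf{Z})}^8 \gg \delta$.

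For task (ii), I would apply \cite[Theorem~12.8]{GT08b} to $f$ on $\mathbf{Z}/N'\mathbf{Z}$ to obtain a $2$-step nilmanifold $G/\Gamma$ of dimension $\delta^{-O(1)}$ and complexity $\exp(\delta^{-O(1)})$, a function $F$ with $\|F\|_{\mathrm{Lip}}\le\exp(\delta^{-O(1)})$, a polynomial sequence $g_0\colon\mathbf{Z}\to G$, and a shift $a\in\mathbf{Z}/N'\mathbf{Z}$ with
\begin{equation*}
    \Bigl|\mathbf{E}_{n\in\mathbf{Z}/N'\mathbf{Z}}f(n+a)\overline{F(g_0(n)\Gamma)}\Bigr|\;\ge\;\exp(-\delta^{-O(1)}).
\end{equation*}
Following the remark after \cite[Theorem~12.8]{GT08b}, the shift is absorbed by replacing $g_0$ with $g(n):=g_0(n-a)$, which is again a polynomial sequence with respect to the same filtration. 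Since $f$ is supported in $[\pm N]\subseteq\mathbf{Z}/N'\mathbf{Z}$, the expectation equals $(N')^{-1}\sum_{n\in\mathbf{Z}}f(n)\overline{F(g(n)\Gamma)}$, where we have identified $g$ with its natural integer lift (taking the same polynomial formula in Mal'cev coordinates). Multiplying through by $N'\asymp N$ and replacing $F$ by $\overline{F}$ gives the desired conclusion.

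The only real care-point is the support accounting in (i): verifying that the support of $f$ forces every relevant $k_i$ to lie in $[-2N,2N]$, so that enlarging to $\mathbf{Z}/N'\mathbf{Z}$ loses at most a constant factor. The rest is a black-box application of \cite[Theorem~12.8]{GT08b} together with standard Mal'cev-coordinate bookkeeping, and I do not anticipate a serious obstacle.
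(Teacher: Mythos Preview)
Your overall strategy---reduce to the Green--Tao $U^3$-inverse theorem on a cyclic group, then transfer back---is exactly what the paper does, and your task (ii) is fine. The gap is in task (i). The inequality
\[
\|f\|_{U^3_{[5N]}}^8 \;\le\; (5N)^{-3}(N')^4\,\|f\|_{U^3(\mathbf{Z}/N'\mathbf{Z})}^8
\]
does \emph{not} follow from the pointwise bound $\mu_{5N}(k_i)\le 1/(5N)$, because the cube product $\sum_y\prod_\omega\mathcal{C}^{|\omega|}f(y+\omega\cdot k)$ is not sign-definite in $k$. Replacing a weight by a larger one is only valid when the summand is nonnegative. In fact the inequality is false in general: after peeling two differences you are left comparing $\|h\|_{U^1_{[5N]}}^2=\int|\widehat{h}|^2\widehat{\mu}_{5N}$ with $(5N)^{-1}|\widehat{h}(0)|^2$ for $h=\Delta_{k_1,k_2}f$, and one can easily have the former positive while the latter vanishes (take $h$ a mean-zero step function on $[1,2N]$).

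The paper handles this by a short detour that exploits positivity one level down. Writing $\|f\|_{U^3_{[5N]}}^8=\sum_h\mu_{5N}(h)\|\Delta_hf\|_{U^2_{[5N]}}^4$ with each summand nonnegative, it applies the $U^2$-inverse theorem (\cref{lem:U2-inver}) to deduce that $\sup_\beta|\widehat{\Delta_hf}(\beta)|\gg\delta^{O(1)}N$ for a positive proportion of $h\in[\pm 2N]$. A large Fourier coefficient is stable on an interval of length $\asymp\delta^{O(1)}/N$, so $\int|\widehat{\Delta_hf}|^4\gg\delta^{O(1)}N^3$ for these $h$, and hence the unweighted integer Gowers sum $\sum_{x,h_1,h_2,h_3}\Delta_{h_1,h_2,h_3}f(x)\gg\delta^{O(1)}N^4$. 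Only at this point is the embedding into $\mathbf{Z}/(2L+1)\mathbf{Z}$ performed. The cost is a polynomial loss in $\delta$, which is harmless here; your argument as written would give no loss, but the inequality it relies on is not available.
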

\begin{proof}
Note that 
\begin{equation*}
\delta N\le\snorm{f}_{U^{3}_{[5N]}}^{8} = \sum_{h}\mu_{5N}\snorm{\Delta_h f}_{U^2_{[5N]}}^2\asymp \mathbf{E}_{h\in [\pm 2N]}\snorm{\Delta_h f}_{U^2_{[5N]}}^2,
\end{equation*}
where we have used that $\Delta_h f = 0$ for $|h|>2N$ and that there exist absolute constants $c,C>0$ such that $cN^{-1}\leq\mu_{5N}(h)\leq C N^{-1}$ for all $|h|\le 2N$. By Markov and \cref{lem:U2-inver}, we find that 
\[\mathbf{E}_{h\in [\pm 2N]}\sup_{\beta\in \mathbf{T}}\bigg|\sum_{x\in \mathbf{Z}} \Delta_hf(x)e(\beta x)\bigg|\gg \delta^{O(1)}N.\]
Note that if the $\beta$ Fourier sum is large then so will the $[\beta\pm\delta^{O(1)}/N]$ Fourier sums. So, by the identity
\[\sum_{x,h_1,h_2}f(x)\ol{f(x+h_1)}\ol{f(x+h_2)}f(x+h_1+h_2) = \int_{\mathbf{T}}|\wh{f}(\Theta)|^4~d\Theta\]
and Markov, it follows that 
\[\mathbf{E}_{h_3\in [\pm 2N]}\sum_{x,h_1,h_2}(\Delta_{h_3}f)(x)\ol{(\Delta_{h_3}f)(x+h_1)}\ol{(\Delta_{h_3}f)(x+h_2)}(\Delta_{h_3}f)(x+h_1+h_2)\gg \delta^{O(1)}N^3.\]
This implies
\begin{equation*}
\sum_{x,h_1,h_2,h_3}\Delta_{h_1,h_2,h_3}f(x)\gg \delta^{O(1)}N^4.
\end{equation*}
Now treat $f$ as a function on the cyclic group $\mathbf{Z}/(2L+1) \mathbf{Z}$, where $L\in [25N,50N]$, $2L+1$ is prime, and we identify $\mathbf{Z}/(2L+1) \mathbf{Z}$ with $[-L,L]$. The above lower bound implies that $f$, viewed as a function on $\mathbf{Z}/(2L+1) \mathbf{Z}$, has large $U^3$-norm in the sense of \cite[Theorem~12.8]{GT08b}, and therefore the desired result follows from \cite[Theorem~12.8]{GT08b}.
\end{proof}

We now perform a preliminary transformation of \cref{thm:U^3} that allow us to assume that $g(0) = \mr{Id}_{G}$ and that $F$ has a vertical frequency.
\begin{theorem}\label{thm:U^3-mod}
Suppose that $f\colon\mathbf{Z}\to\mathbf{C}$ is a $1$-bounded function such that $\on{supp}(f)\subseteq [\pm N]$ and 
\[\snorm{f}_{U^{3}_{[5N]}}^{8}\ge \delta N.\]
Then there exists a degree $2$ nilmanifold $G/\Gamma$ with dimension $\delta^{-O(1)}$ and complexity $\exp(\delta^{-O(1)})$, a function $F\colon G/\Gamma\to\mathbf{C}$ with $\snorm{F}_{\mr{Lip}}\le \poly_{\delta^{-1}}(\delta^{-1})$ possessing a vertical frequency $\xi$ with $\snorm{\xi}\le \poly_{\delta^{-1}}(\delta^{-1})$, and a polynomial sequence $g\colon\mathbf{Z}\to G$ with $g(0) = \mr{Id}_{G}$ such that
\[\bigg|\sum_{n\in\mathbf{Z}} f(n) F(g(n)\Gamma)\bigg|\ge \poly_{\delta^{-1}}(\delta)N.\]
\end{theorem}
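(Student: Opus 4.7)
The plan is to start from the conclusion of \cref{thm:U^3} and perform two standard post-processing steps: a base-point normalization to achieve $g(0) = \mr{Id}_G$, and a vertical Fourier decomposition followed by pigeonhole to isolate a single vertical frequency. Throughout, write $m = \delta^{-O(1)}$ for the dimension and $L = \exp(\delta^{-O(1)})$ for the complexity and Lipschitz bound produced by \cref{thm:U^3}.

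First, I would normalize $g(0)$. By \cite[Lemma~A.14]{GT12}, there exist $\gamma_0 \in \Gamma$ and $\eta_0 \in G$ with $g(0) = \eta_0 \gamma_0$ and $d_{\mc{X}}(\eta_0, \mr{Id}_G) \le \poly_m(L)$. Setting $g'(n) := \eta_0^{-1} g(n) \gamma_0^{-1}$ and $\wt{F}(x\Gamma) := F(\eta_0 x \Gamma)$, one checks that $g'(0) = \mr{Id}_G$, that $g'$ is still a polynomial sequence with respect to $G_\bullet$ (polynomial sequences are closed under left/right translation by constants), and that the original correlation is preserved, since $g(n)\Gamma = \eta_0 g'(n)\Gamma$. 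The Lipschitz constant of $\wt{F}$ remains $\poly_m(L)$ by the approximate isometry property for left-multiplication by bounded elements \cite[Lemma~A.5]{GT12}.

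Next, I would perform a vertical Fourier expansion on $\wt F$. Since $G_\bullet$ is of degree two, $G_2$ is central in $G$ and $G_2/(G_2 \cap \Gamma)$ is a torus of dimension at most $m$, whose characters are indexed by a lattice of frequencies $\xi$. Writing $\wt F = \sum_\xi \wt F_\xi$ via the integral formula $\wt F_\xi(x\Gamma) = \int_{G_2/(G_2\cap\Gamma)} \wt F(g_2 x\Gamma) \, \overline{\chi_\xi(g_2)} \, dg_2$, I would then smooth vertically against a Fejér-type kernel on $G_2/(G_2\cap\Gamma)$ of bandwidth $K = \poly_m(L)$. Using the $\poly_m(L)$-Lipschitz bound on $\wt F$ to control the vertical modulus of continuity, the contribution of the tail $\snorm{\xi} > K$ can be forced to be at most half the desired correlation $\exp(-\delta^{O(1)})N$. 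A pigeonhole over the at most $K^m = \poly_m(L)$ surviving frequencies then selects a single $\xi$ with $\snorm{\xi} \le \poly_m(L)$ and correlation $\gg \exp(-\delta^{O(1)})N/\poly_m(L)$; the Lipschitz norm of this single $\wt F_\xi$ remains $\poly_m(L)$ by the integral representation and the metric lemmas of \cite[Appendix~A]{GT12}. Unfolding $\poly_m(L)$ with $m = \delta^{-O(1)}$ and $L = \exp(\delta^{-O(1)})$ gives bounds of the advertised form $\poly_{\delta^{-1}}(\delta^{-1})$ for the Lipschitz constant and the frequency size, and $\poly_{\delta^{-1}}(\delta) \cdot N$ for the correlation.

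The main technical obstacle is quantitative: ensuring that the Fejér truncation and the vertical Fourier coefficients inherit Lipschitz and size bounds that scale exactly as $\poly_m(L)$, rather than worse. This requires explicit invocation of the rationality and metric lemmas of \cite[Appendix~A]{GT12} (as upgraded in \cite{TT21}) with careful tracking of the dimensional dependencies, but is by now a standard maneuver in the quantitative nilsequence literature, so no conceptually new ingredient is needed.
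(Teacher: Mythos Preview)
Your proposal is correct and follows essentially the same approach as the paper: apply \cref{thm:U^3}, then perform base-point normalization and vertical Fourier expansion with pigeonhole. The only difference is that the paper performs these two post-processing steps in the opposite order (vertical expansion via \cref{lem:vertical-expansion} first, then the $g(0)=\mr{Id}_G$ normalization), but since $G_2$ is central the normalization preserves vertical frequency, so the ordering is immaterial.
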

\begin{proof}
First apply \cref{thm:U^3} to find some $G/\Gamma,F,g$ which appropriately correlated with $f$. We may replace $F$ by some $F'$ which has a vertical frequency $\snorm{\xi}\le\on{poly}_{\delta^{-1}}(\delta^{-1})$ by applying \cref{lem:vertical-expansion} with error parameter $\eps$ taken to be $\exp(-\delta^{-O(1)})$ and using the pigeonhole principle. The Lipschitz constant is now of quality $\on{poly}_{\delta^{-1}}(\delta^{-1})$. Note here we are using that $\poly_{\delta^{-1}}(\exp(\delta^{-O(1)}))\le\poly_{\delta^{-1}}(\delta^{-1})$ up to changing the implicit constants.

To force $g(0) = \mr{Id}_{G}$, by using \cite[Lemma~A.14]{GT12} we can factor $g(0) = \{g(0)\}[g(0)]$ with $\snorm{\psi(\{g(0)\})}_{\infty}\le 1$ and $[g(0)]\in \Gamma$. Then, we have that 
\begin{align*}
F'(g(n)\Gamma) &= F'(g(n)g(0)^{-1}g(0)\Gamma) \\
&= F'(g(n)g(0)^{-1}\{g(0)\}\Gamma) \\
&= F'(\{g(0)\}(\{g(0)\}^{-1}g(n)g(0)^{-1}\{g(0)\})\Gamma),
\end{align*}
and taking $\wt{F}(x) = F'(\{g(0)\}^{-1}x)$ and $\wt{g}(n) = \{g(0)\}^{-1}g(n)g(0)^{-1}\{g(0)\}$ gives the desired. 
\end{proof}

We are now in position to complete the proof of \cref{prop:degree-lower}.
\begin{proof}[{Proof of \cref{prop:degree-lower}}]
Throughout the proof $\delta$ will be assumed to be smaller than an appropriate absolute constant. 

\noindent\textbf{Step 1: Applying dual-difference interchange.} By the definition of the box-norm, we have that 
\[\sum_{x\in \mathbf{Z}} \mathbf{E}_{\substack{h_i,h_i'\in W\cdot[ N/W]\\ 1\le i\le k}}\Delta'_{(h_i,h_i')_{i=1}^{k}} \mc{D}^{1}(f_2,f_3) \ge \delta N.\]
Recall that $\mc{D}^{1}(f_2,f_3)(x) = \mathbf{E}_{y\in [\pm M]}f_2(x+P(y))f_3(x+2P(y))$ with $M=\lfloor N^{1/2}W^{-1/2}\rfloor$, and define 
\[g(x,y) = f_2(x+P(y))f_3(x+2P(y))\mbm{1}_{|y|\le M}\mbm{1}_{|x|\le 100\delta^{-1} N}.\]
It follows via the support conditions on the $f_i$ that we immediately have  
\[\sum_{x\in \mathbf{Z}} \mathbf{E}_{\substack{h_i,h_i'\in W\cdot[ N/W] \\ 1\le i\le k}}\Delta'_{(h_i,h_i')_{i=1}^{k}} (\mathbf{E}_{y\in [\pm M]}g(x,y)) \ge \delta N.\]
Applying \cref{lem:dual-interchange}, we deduce that 
\[\mathbf{E}_{\substack{h_i,h_i'\in W\cdot[ N/W] \\ 1\le i\le k-3}}\snorm{\mathbf{E}_{y\in [\pm M]} \Delta'^{(x)}_{(h_i,h_i')_{i=1}^{k-3}}g(x,y)}_{U^{3}_{W\cdot[ N/W]}}^{8}\gg \delta^{O_k(1)}N.\]
Therefore, there are at least $\delta^{O_k(1)}(N/W)^{2(k-3)}$ shifts $(h_i,h_i')_{i=1}^{k-3}\in (W\cdot[N/W])^{2\times(k-3)}$ such that 
\[\snorm{\mathbf{E}_{y\in [\pm M]} \Delta'^{(x)}_{(h_i,h_i')_{i=1}^{k-3}}g(x,y)}_{U^{3}_{W\cdot[ N/W]}}^{8}\gg \delta^{O_k(1)}N.\]

\noindent\textbf{Step 2: Setup for applying the $U^3$-inverse theorem.}
For the next few labeled steps, we fix shifts $(h_i,h_i')_{i=1}^{k-3}\in (W\cdot[N/W])^{2\times(k-3)}$ such that 
\[\snorm{\mathbf{E}_{y\in [\pm M]} \Delta'^{(x)}_{(h_i,h_i')_{i=1}^{k-3}}g(x,y)}_{U^{3}_{W\cdot[N/W]}}^{8}\gg \delta^{O_k(1)}N.\]
Furthermore, denote
\[f_j^{(1)}(x) = \Delta'^{(x)}_{(h_i,h_i')_{i=1}^{k-3}}f_j(x)\]
for $j\in \{2,3\}$. 

Since all the differences defining the box-norm are divisible by $W$, we have 
\[\sum_{j\in [W]}\snorm{\mathbf{E}_{y\in [\pm M]}f_2^{(1)}(Wx + j + P(y))f_3^{(1)}(Wx + j + 2P(y))}_{U^{3}_{[N/W]}}^{8}\gg \delta^{O_k(1)}N.\]
By the triangle inequality, we have
\[\sum_{\substack{j\in [W]\\k\in [W]}}\snorm{\mathbf{E}_{\substack{y\in [\pm MW^{-1}]}}f_2^{(1)}(Wx + j + P(Wy + k))f_3^{(1)}(Wx + j + 2P(Wy + k)))}_{U^{3}_{[N/W]}}^{8}\gg \delta^{O_k(1)}NW.\]
Defining
\[f_{i,t}^{(2)}(x) = f_i^{(1)}(Wx + t),\quad\text{and}\quad P_r(y) = \frac{P(Wy+r)-P(r)}{W},\]
we therefore have 
\[\sum_{\substack{j\in [W]\\k\in [W]}}\snorm{\mathbf{E}_{\substack{y\in [\pm MW^{-1}]}}f_{2,j+P(k)}^{(2)}(x + P_k(y))f_{3,j+2P(k)}^{(2)}(x + 2P_k(y))}_{U^{3}_{[N/W]}}^{8}\gg \delta^{O_k(1)}NW.\]
Thus, for at least a $\delta^{O_k(1)}$ fraction of pairs $(j,k)\in [W]^2$, we have that 
\[\snorm{\mathbf{E}_{\substack{y\in [\pm MW^{-1}]}}f_{2,j+P(k)}^{(2)}(x + P_k(y))f_{3,j+2P(k)}^{(2)}(x + 2P_k(y))}_{U^{3}_{[N/W]}}^{8}\gg \delta^{O_k(1)}NW^{-1}.\]

We fix such $j$ and $k$ for the next few labeled steps within the argument. We now perform a certain set of artificial changes of variables; this change of variable is directly inspired by work of Altman \cite{Alt22}, and is used to reduce to considering the ``flag'' set of forms $\{2(x+2y), 3(x+y), 6y, 6x\}$ that was considered in \cref{sec:nil-main}.

Define $f_{2,t}^{(3)}(x) = f_{2,t}^{(2)}(x/3)\mbm{1}_{3|x}$, $f_{3,t}^{(3)}(x) = f_{3,t}^{(2)}(x/2)\mbm{1}_{2|x}$, 
\[H(x) = \mathbf{E}_{\substack{y\in [\pm MW^{-1}]}}f_{2,j+P(k)}^{(3)}(3(x + P_k(y)))f_{3,j+2P(k)}^{(3)}(2(x + 2P_k(y))),\]
and
\[H^{\ast}(x) = H(x/6)\mbm{1}_{6|x}.\]
Note that
\[\snorm{H^{\ast}(x)}_{U^{3}_{[6N/W]}}^{8}\gg \delta^{O_k(1)}NW^{-1};\]
this follows via expanding the definition of the box-norm and noting that $H^{\ast}$ is only supported on multiples of $6$.

\noindent\textbf{Step 3: Applying the $U^3$-inverse theorem and reduction to \cref{lem:poly-sequence}.}
Note that, by \cref{cor:rescale-loss}, we have
\[\snorm{H^{\ast}(x)}_{U^{3}_{[10^{3}\delta^{-1}N/W]}}^{8}\gg \delta^{O_k(1)}NW^{-1}.\]
Therefore, by \cref{thm:U^3-mod} (applied noting that $H^{\ast}(x)$ has support contained in $[\pm 30\delta^{-1} N/W]$), we have 
\[\bigg|\sum_{x\in \mathbf{Z}}H^{\ast}(x) F(g(x)\Gamma)\bigg|\ge \poly_{\delta^{-1}}(\delta^{O_k(1)})NW^{-1},\]
where $G/\Gamma$, $F\colon G/\Gamma\to\mathbf{C}$, and $g$ are as in \cref{thm:U^3-mod}. Let the vertical character of $F$ be $\xi$. Unwinding the definition of $H^{\ast}(x)$, we in fact that have that 
\[\bigg|\sum_{x\in \mathbf{Z}}H(x) F(g(6x)\Gamma)\bigg|\ge \poly_{\delta^{-1}}(\delta^{O_k(1)})NW^{-1}.\]
Inserting the definition of $H(x)$ yields
\begin{equation}\label{eq:correl}
\bigg|\sum_{x\in \mathbf{Z}}F(g(6x)\Gamma)\mathbf{E}_{\substack{y\in [\pm MW^{-1}]}}f_{2,j+P(k)}^{(3)}(3(x + P_k(y)))f_{3,j+2P(k)}^{(3)}(2(x + 2P_k(y)))\bigg|\ge \poly_{\delta^{-1}}(\delta^{O_k(1)})NW^{-1}.  
\end{equation}
We will return to \eqref{eq:correl} eventually; we first deduce a series of structural claims regarding the polynomial sequence $g(\cdot)$. 

Applying \cref{lem:dual-expansion} with $\eps = \poly_{\delta^{-1}}(\delta^{O_k(1)})$ and using the pigeonhole principle to choose a single $\alpha$, there exist functions $F_1$ with vertical character $-9\xi$, $F_2$ with vertical character $8\xi$, and $F_3$ with vertical character $2\xi$ such that 
\begin{align*}
&\bigg|\sum_{x\in \mathbf{Z}}\mathbf{E}_{\substack{y\in [\pm MW^{-1}]}}F_3(g(6P_k(y))\Gamma)f_{2,j+P(k)}^{(3)}(3(x + P_k(y)))F_2(g(3(x+P_k(y)))\Gamma)\\
&\qquad\qquad\qquad\qquad f_{3,j+2P(k)}^{(3)}(2(x + 2P_k(y)))F_1(g(2(x+2P_k(y)))\Gamma)\bigg|\ge \poly_{\delta^{-1}}(\delta^{O_k(1)})NW^{-1}
\end{align*}
and $\snorm{F_i}_{\mr{Lip}}\le \poly_{\delta^{-1}}(\delta^{-O_k(1)})$ for each $i=1,2,3$. Set $\wt{F}_1(z):=f_{2,j+P(k)}^{(3)}(3z)F_2(g(3z)\Gamma)$ and $\wt{F}_2(z):=f_{3,j+2P(k)}^{(3)}(2z)F_1(g(2z)\Gamma)$. By Parseval's identity, we have
\begin{align*}
&\bigg|\mathbf{E}_{\substack{y\in [\pm MW^{-1}]}}F_3(g(6P_k(y))\Gamma)\int_{\mathbf{T}}\widehat{\wt{F}_1}(\eta)\widehat{\wt{F}_2}(\eta)e(\eta P_k(y))d\eta\bigg|\ge \poly_{\delta^{-1}}(\delta^{O_k(1)})NW^{-1}
\end{align*}
or equivalently
\begin{align*}
&\bigg|\int_{\mathbf{T}}\widehat{\wt{F}_1}(\eta)\widehat{\wt{F}_2}(\eta)\mathbf{E}_{\substack{y\in[\pm MW^{-1}]}}F_3(g(6P_k(y))\Gamma)e(\eta P_k(y))d\eta\bigg|\ge \poly_{\delta^{-1}}(\delta^{O_k(1)})NW^{-1}.
\end{align*}

Thus, there exists $\beta \in \mathbf{T}$ such that
\[\bigg|\mathbf{E}_{\substack{y\in [\pm MW^{-1}]}}F_3(g(6P_k(y))\Gamma)e(6\beta P_k(y))\bigg|\ge \poly_{\delta^{-1}}(\delta^{O_k(1)}),\]
since $\max\{\snorm{\wt{F}_1}_2^2,\snorm{\wt{F}_2}_2^2\}\ll \delta^{-1} NW^{-1}$. Now, fix a choice of $\beta^{\ast}\in \mathbf{T}$ such that $2\beta^{\ast} = \beta$.

\noindent\textbf{Step 4: Applying \cref{lem:poly-sequence}.}
We now use \cref{lem:poly-sequence} to reduce the degree of the polynomial sequence $g$. The argument splits into two cases. In the case when $\xi$ is zero, we will be able to directly reduce the degree of the nilsequence; we defer this case until later. 

If $\xi$ is nonzero, let $G_{\bullet}$ denote the degree $2$ filtration $G = G_0 = G_1\geqslant G_2\geqslant \mr{Id}_G$ relative to which $g$ is a polynomial sequence. As $\xi$ is nonzero, there exists $h\in G_2$ such that $\xi(h) = \beta^{\ast}$. Define $\wt{g}(n) = g(n) h^{n}$. Then $\wt{g}(0) = \mr{Id}_G$ and $\wt{g}$ is a polynomial sequence with respect to $G_{\bullet}$. By construction, we have that 
\[\bigg|\mathbf{E}_{\substack{y\in [\pm MW^{-1}]}}F_3(\wt{g}(6P_k(y))\Gamma)\bigg|\ge \poly_{\delta^{-1}}(\delta^{O_k(1)}).\]

This is exactly the setup of \cref{lem:poly-sequence}. We may thus factor $\wt{g}(n)$ as $\wt{g} = \eps \cdot g' \cdot \gamma$ with $\eps, g',\gamma\in\on{Poly}(\mathbf{Z},G_\bullet)$, where
\begin{itemize}
    \item for all $t\in [\pm 100\delta^{-1}\cdot N/W]$, $d(\eps(t),\eps(t-1))\le W\on{poly}_{\delta^{-1}}(\delta^{-O_k(1)})/N$ and $d(\eps(t),\mr{Id}_G)\le \on{poly}_{\delta^{-1}}(\delta^{-O_k(1)})$,
    \item $\gamma$ is $\on{poly}_{\delta^{-1}}(\delta^{-O_k(1)})$-rational and $\gamma(n)\Gamma$ is periodic with period at most $\on{poly}_{\delta^{-1}}(\delta^{-O_k(1)})$, 
    \item $g'$ takes values only $G'$, a connected proper $\on{poly}_{\delta^{-1}}(\delta^{-O_k(1)})$-rational subgroup with respect to $\mc{X}$, and may be viewed as a polynomial sequence with respect to the filtration $G_{\bullet}'$, where $G_i'
    = G' \cap G_i$,
    \item $\xi$ is trivial on $G_2' = G' \cap G_2$.
\end{itemize}
\noindent\textbf{Step 5: Setup for degree-reduction.}
Recall from \eqref{eq:correl} that
\[\bigg|\sum_{x\in \mathbf{Z}}F(g(6x)\Gamma)\mathbf{E}_{\substack{y\in [\pm MW^{-1}]}}f_{2,j+P(k)}^{(3)}(3(x + P_k(y))f_{3,j+2P(k)}^{(3)}(2(x + 2P_k(y)))\bigg|\ge \poly_{\delta^{-1}}(\delta^{O_k(1)})NW^{-1}.\]
By the definitions of $f_{2,t}^{(3)}$, $f_{3,t}^{(3)}$, and $\wt{g}$ and since $F$ has vertical character $\xi$, it follows that 
\begin{align*}
&\bigg|\sum_{x\in \mathbf{Z}}e(-6\beta^{\ast}x)F(\wt{g}(6x)\Gamma)\mathbf{E}_{\substack{y\in [\pm MW^{-1}]}}f_{2,j+P(k)}^{(2)}(x + P_k(y))f_{3,j+2P(k)}^{(2)}(x + 2P_k(y))\bigg|\\
&\qquad\qquad\ge \poly_{\delta^{-1}}(\delta^{O_k(1)})NW^{-1}.
\end{align*}
We next break $[\pm 100\delta^{-1}N/W]$ into nearly-equal length arithmetic progressions $\{Q_1,\ldots, Q_t\}$ of length $\on{poly}_{\delta^{-1}}(\delta^{O_k(1)})N/W$, with difference equal to the period of $\gamma$. By the pigeonhole principle, there exists $Q\in\{Q_1,\ldots, Q_t\}$ such that 
\begin{align*}
&\bigg|\sum_{x\in Q}e(-6\beta^{\ast}x)F(\wt{g}(6x)\Gamma)\mathbf{E}_{\substack{y\in [\pm MW^{-1}]}}f_{2,j+P(k)}^{(2)}(x + P_k(y))f_{3,j+2P(k)}^{(2)}(x + 2P_k(y))\bigg|\\
&\qquad\qquad\ge \poly_{\delta^{-1}}(\delta^{O_k(1)})|Q|.
\end{align*}
By choosing the implicit constant in the length of $Q$ sufficiently large (so that the length is small), we get, in fact, that 
\begin{align*}
&\bigg|\sum_{x\in Q}e(-6\beta^{\ast}x)F(\eps_Qg'(6x)\gamma_Q\Gamma)\mathbf{E}_{\substack{y\in [\pm MW^{-1}]}}f_{2,j+P(k)}^{(2)}(x + P_k(y))f_{3,j+2P(k)}^{(2)}(x + 2P_k(y))\bigg|\\
&\qquad\qquad\ge \poly_{\delta^{-1}}(\delta^{O_k(1)})|Q|,
\end{align*}
where $\eps_Q$ is a $\on{poly}_{\delta^{-1}}(\delta^{-O_k(1)})$-bounded element and $\gamma_Q$ is $\on{poly}_{\delta^{-1}}(\delta^{-O_k(1)})$-bounded and rational element. (A similar argument appears in the proof of \cref{lem:poly-sequence}.) Let $\wt{F}(x) = F(\eps_Q\gamma_Q x)$ and $g^{(2)} = \gamma_Q^{-1}g'\gamma_Q$, so that  
\begin{align*}
&\bigg|\sum_{x\in Q}e(-6\beta^{\ast}x)\wt{F}(g^{(2)}(6x)\Gamma)\mathbf{E}_{\substack{y\in [\pm MW^{-1}]}}f_{2,j+P(k)}^{(2)}(x + P_k(y))f_{3,j+2P(k)}^{(2)}(x + 2P_k(y))\bigg|\\
&\qquad\qquad\ge \poly_{\delta^{-1}}(\delta^{O_k(1)})|Q|. 
\end{align*}

Note that $g^{(2)}$ is a polynomial sequence with respect to the filtration $\gamma_Q^{-1}G_{\bullet}'\gamma_Q$. We now claim that $\xi$ is trivial on $\gamma_Q^{-1}G_{2}'\gamma_Q$. Indeed, $G_2'\subseteq G_2$ and $[G,G_2] = \mr{Id}_G$, and thus $G_{2}'\subseteq Z(G)$ (the center of $G$). It follows that  $\gamma_Q^{-1}G_{2}'\gamma_Q = G_2'$ and thus $\xi$ is trivial on $\gamma_Q^{-1}G_{2}'\gamma_Q$. Furthermore, note that $\wt{F}$ has vertical frequency $\xi$, is $\on{poly}_{\delta^{-1}}(\delta^{-O_k(1)})$-Lipschitz (with respect to a suitable Mal'cev basis on $\gamma_Q^{-1}G'\gamma_Q$), and that $\gamma_Q^{-1}G'\gamma_Q$ is $\on{poly}_{\delta^{-1}}(\delta^{-O_k(1)})$-rational (see \cite[Lemma~A.13]{GT12}) with respect to $G$.

Let $\Gamma' = (\gamma_Q^{-1}G'\gamma_Q)\cap \Gamma$. Since $(\gamma_Q^{-1}G'\gamma_Q)\Gamma/\Gamma \cong (\gamma_Q^{-1}G'\gamma_Q)/\Gamma'$, we have
\begin{align*}
&\bigg|\sum_{x\in Q}e(-6\beta^{\ast}x)\wt{F}(g^{(2)}(6x)\Gamma')\mathbf{E}_{\substack{y\in [\pm MW^{-1}]}}f_{2,j+P(k)}^{(2)}(x + P_k(y))f_{3,j+2P(k)}^{(2)}(x + 2P_k(y))\bigg|\\
&\qquad\qquad\ge \poly_{\delta^{-1}}(\delta^{O_k(1)})|Q|. 
\end{align*}
As $\gamma_Q^{-1}G'\gamma_Q$ is a sufficiently rational subgroup, one may put a Mal'cev basis $\mc{X}'$ on $\Gamma'$ such that the Lipschitz bounds on $F$ transfer to $\mc{X}'$. We note that until this point, we have been operating under the assumption that $\xi$ is nonzero. When $\xi$ is zero, by taking $\beta^{\ast} = 0$, we can immediately find ourselves in the same situation by taking $\eps_Q = \gamma_Q = \mr{Id}_G$, $G' = G$, and $\Gamma' = \Gamma$.

From these last couple steps, the key extra property we have guaranteed compared to \eqref{eq:correl} is that we know $g^{(2)}$ lives in $\gamma_Q^{-1}G'\gamma_Q$ and also $\xi$ is trivial on $G_2\cap(\gamma_Q^{-1}G'\gamma_Q)$.

\noindent\textbf{Step 6: Degree-reduction.}
We are finally in a position to obtain the necessary degree reduction. Given the above setup, we define $G^{\ast} := \gamma_Q^{-1}G'\gamma_Q/(\gamma_Q^{-1}G_2'\gamma_Q)$ and take $g^{(3)} \equiv g^{(2)} \mod \gamma_Q^{-1}G_2'\gamma_Q$ to be a polynomial sequence in $G^{\ast}$. Furthermore, let $\Gamma^{\ast} = \Gamma'/(\Gamma'\cap \gamma_Q^{-1}G_2'\gamma_Q)$ and $F^{\ast}$ be the projection of $\wt{F}$ from the domain $G'/\Gamma'$ to the domain $G^{\ast}/\Gamma^{\ast}$ (which is well defined, as $\wt{F}$ is invariant under $\gamma_Q^{-1}G_2'\gamma_Q$). We have
\begin{align*}
&\bigg|\sum_{x\in Q}e(-6\beta^{\ast}x)F^{\ast}(g^{(3)}(6x)\Gamma^{\ast})\mathbf{E}_{\substack{y\in [\pm MW^{-1}]}}f_{2,j+P(k)}^{(2)}(x + P_k(y))f_{3,j+2P(k)}^{(2)}(x + 2P_k(y))\bigg|\\
&\qquad\qquad\ge \poly_{\delta^{-1}}(\delta^{O_k(1)})|Q|.
\end{align*}
Note, however, that now $g^{(3)}$ is a polynomial sequence of degree $1$. Combining \cref{lem:vertical-expansion}, the fact that the functions $f_{2,j+P(k)}^{(2)}$, $f_{3,j+2P(k)}^{(2)}$ are $1$-bounded, and the fact that $Q$ is an arithmetic progression of appropriate length and common difference, it follows using \cref{lem:interval-corr} that
\begin{align*}
&\sup_{\alpha\in \mathbf{T}}\bigg|\sum_{x\in \mathbf{Z}}e(-\alpha x)\mathbf{E}_{\substack{y\in [\pm MW^{-1}]}}f_{2,j+P(k)}^{(2)}(x + P_k(y))f_{3,j+2P(k)}^{(2)}(x + 2P_k(y))\bigg|\ge \poly_{\delta^{-1}}(\delta^{O_k(1)})NW^{-1}.
\end{align*}

\noindent\textbf{Step 7: $U^2$-control of S\'{a}rk\"{o}zy-type configurations}
Fix $\alpha$ such that 
\begin{align*}
\bigg|\sum_{x\in \mathbf{Z}}e(-\alpha x)\mathbf{E}_{\substack{y\in [\pm MW^{-1}]}}f_{2,j+P(k)}^{(2)}(x + P_k(y))f_{3,j+2P(k)}^{(2)}(x + 2P_k(y))\bigg|\ge \poly_{\delta^{-1}}(\delta^{O_k(1)})NW^{-1}.
\end{align*}
This is equivalent to 
\begin{align*}
&\bigg|\sum_{x\in \mathbf{Z}}\mathbf{E}_{\substack{y\in [\pm MW^{-1}]}}f_{2,j+P(k)}^{(2)}(x + P_k(y)) e(-2\alpha(x+P_k(y)))\\
&\qquad\qquad\qquad\qquad\cdot f_{3,j+2P(k)}^{(2)}(x + 2P_k(y))e(\alpha(x+2P_k(y))\bigg|\ge \poly_{\delta^{-1}}(\delta^{O_k(1)})NW^{-1}.
\end{align*}
This immediately implies, by \cref{lem:sarkozy}, that 
\[\min_{i\in \{2,3\}}\sup_{\alpha\in \mathbf{T}}\bigg|\sum_{x\in \mathbf{Z}}e(\alpha x)f_{i, j + (i-1)P(k)}^{(2)}(x)\bigg|\ge \poly_{\delta^{-1}}(\delta^{O_k(1)})NW^{-1}\]
for our original choice of $(j,k)\in[W]^2$.

\noindent\textbf{Step 8: Unwinding and deducing the final result.}
Note that if one samples $j\in [W]$ and $k\in [W]$ uniformly, then $j + (i-1)P(k)$ is uniformly distributed modulo $W$ for each $i$. Also, recall that the correlation was deduced for a positive portion of $j$ and $k$. So, we can deduce
\[\min_{i\in \{2,3\}}\sum_{j\in [W]}\sup_{\alpha\in \mathbf{T}}\bigg|\sum_{x\in \mathbf{Z}}e(\alpha x)f_{i, j}^{(2)}(x)\bigg|\ge \poly_{\delta^{-1}}(\delta^{O_k(1)})N.\]
By the converse to the $U^2$-inverse theorem (see, e.g., \cref{lem:converse}), it follows that 
\[\min_{i\in \{2,3\}}\sum_{j\in [W]}\snorm{f_{i, j}^{(2)}(x)}_{U^{2}_{[N/W]}}^{4}\ge \poly_{\delta^{-1}}(\delta^{O_k(1)})N.\]
Inserting the definition of $f_{i,j}^{(2)}$ yields
\[\min_{i\in \{2,3\}}\snorm{f_{i}^{(1)}(x)}_{U^{2}_{W\cdot[N/W]}}^{4}\ge \poly_{\delta^{-1}}(\delta^{O_k(1)})N.\]
We now unwind the definition of $f_{i}^{(1)}$. Recall from Step 1 that a positive proportion of shifts $(h_i,h_i')_{i=1}^{k-3}\in (W\cdot[N/W])^{k-3}$ were satisfied conditions sufficient for the analysis in Step 2 (and thus subsequent steps) to follow. Therefore, using that the box-norm is always nonnegative, we obtain 
\[\min_{i\in \{2,3\}}\mathbf{E}_{\substack{h_j,h_j'\in W\cdot[N/W]\\ 1\le j\le k-3}}\snorm{\Delta_{(h_j,h_j')_{i=1}^{k-3}}f_{i}(x)}_{U^{2}_{W\cdot[N/W]}}^{4}\ge \poly_{\delta^{-1}}(\delta^{O_k(1)})N.\]
By the definition of the box-norm, this is equivalent to
\[\min_{i\in \{2,3\}}\snorm{f_{i}(x)}_{U^{k-1}_{W\cdot[\pm N/W]}}^{2^{k-1}}\ge \poly_{\delta^{-1}}(\delta^{O_k(1)})N.\]
This (finally) completes the proof. 
\end{proof}

\section{Proof of \texorpdfstring{\cref{thm:main}}{Theorem 1.1}}\label{sec:main}

\subsection{Initial \texorpdfstring{$U^s$}{Us}-norm control and degree-lowering output}
To obtain our initial $U^s$-norm control for the counting operator $\Lambda^{W}$, we can, essentially, apply \cite[Theorem~6.1]{Pel20} as a black-box.
\begin{proposition}\label{prop:PET+Quant-output}
There exists a positive integer $s = s_{\ref{prop:PET+Quant-output}}$ such that the following holds. Fix $1$-bounded functions $f_1,f_2,f_3\colon\mathbf{Z}\to\mathbf{C}$ with $\on{supp}(f_i)\subseteq [\pm CN]$ for $i=1,2,3$, $W$, $M$, and $P$ as in \eqref{eq:Wdef}, and $N\ge (W\delta^{-1})^{\Omega(1)}$. If
\[\bigg|\Lambda^{W}(f_1,f_2,f_3)\bigg|\ge \delta MN,\]
then
\[\min_{i\in [3]}\snorm{f_i}_{U^{s}_{W\cdot[N/W]}}^{2^{s}}\gg_{C} \delta^{O(1)}N.\]
\end{proposition}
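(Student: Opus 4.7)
The plan is to combine the PET induction scheme of Bergelson and Leibman~\cite{BL96} with the quantitative concatenation theorems of~\cite[Theorem~6.1]{Pel20}, following the same high-level strategy used in~\cite{Pel19,PP19,PP22,Pel20}. First I would apply a sequence of Cauchy--Schwarz inequalities to the $3$-term polynomial configuration $(x,x+P(k),x+2P(k))$, successively duplicating the $k$-variable and translating in $x$ in order to isolate one function $f_i$ at a time. The standard PET weight-vector analysis applied to this configuration (which has degree $2$ and length $3$) terminates in $O(1)$ steps and shows that the assumed bound $|\Lambda^W(f_1,f_2,f_3)|\ge\delta MN$ implies a lower bound of the form
\[
\mathbf{E}_{\substack{h_j,h_j'\in H_j\\1\le j\le s_0}}\sum_{x\in\mathbf{Z}}\Delta'_{(h_j,h_j')_{j=1}^{s_0}}f_i(x)\gg_C \delta^{O(1)}N
\]
for each $i\in[3]$, where each $H_j$ is an explicit set of shifts arising from differences of $P$-values (i.e., of the form $\{c_j(P(k)-P(k'))\colon k,k'\in[\pm M]\}$ or sums of such), each of size $\asymp N/W$.

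Next I would invoke the quantitative concatenation results of~\cite[Section~5]{Pel20} to collapse this box-norm lower bound into a $U^s$-norm lower bound for each $f_i$ over a single common shift set. The concatenation machinery takes several box-norm bounds with ``polynomial'' shift sets and, via further Cauchy--Schwarz manipulations with only polynomial loss in $\delta$, produces a single $U^s$-norm lower bound; in our $W$-tricked setting, the fact that $P(k)=Wk^2+k$ ensures that the relevant shift sets naturally align with the set $W\cdot[N/W]$ up to a bounded-index sublattice structure, which is precisely what allows one to conclude control by $U^s_{W\cdot[N/W]}$ rather than by $U^s$ over a generic interval. The final value of $s=s_{\ref{prop:PET+Quant-output}}$ is absolute because the number of Cauchy--Schwarz applications in both the PET and concatenation stages depends only on the degree and length of the configuration.

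The main obstacle will be verifying that the output of the PET induction in our setting matches the hypotheses of the concatenation theorems of~\cite[Section~5]{Pel20}, which are phrased primarily for polynomial progressions whose polynomials have pairwise distinct degrees. Our configuration has $P(y)$ and $2P(y)$ sharing the same degree, so the statement of~\cite[Theorem~6.1]{Pel20} cannot be invoked as a literal black box; however, the underlying arithmetic-combinatorial arguments (PET to reduce to box-norms, concatenation to merge them into $U^s$-norms) are insensitive to this distinction, requiring only that the shift sets $H_j$ produced by PET be of polynomial-image type. The remaining bookkeeping---tracking the weight vectors through the PET recursion, checking that the $W$-trick structure survives the Cauchy--Schwarz steps, and extracting the precise shift set $W\cdot[N/W]$---is routine but tedious. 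The support condition $\on{supp}(f_i)\subseteq[\pm CN]$ contributes only a $O_C(1)$ factor to the implicit constants, and the hypothesis $N\ge(W\delta^{-1})^{\Omega(1)}$ ensures that all polynomial image sets and arithmetic progressions encountered throughout the argument are of their expected size.
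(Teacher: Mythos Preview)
Your strategy is correct and is essentially what the paper does: PET induction followed by quantitative concatenation, as packaged in \cite[Theorem~6.1]{Pel20}. The difference is one of economy. The paper's proof is a single paragraph that invokes \cite[Theorem~6.1]{Pel20} directly as a black box: after shifting so the $f_i$ are supported in $[2CN]$, one applies the theorem once for each $i\in\{1,2,3\}$, centering the configuration on $f_i$ and feeding in the two remaining polynomials (e.g., $P_1(y)=-P(y)$ and $P_2(y)=P(y)$ when centering on $f_2$). Your concern that the same-degree issue blocks a literal citation is not one the paper shares; the PET and concatenation machinery underlying \cite[Theorem~6.1]{Pel20} requires only that the polynomials have distinct leading terms, and the distinct-degrees hypothesis in \cite{Pel20} enters only later, in the degree-lowering phase. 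So the re-derivation you sketch is unnecessary, though it would of course work.

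The one step you treat vaguely and the paper makes explicit is the post-processing of the norm parameters. The black box outputs control by a box-norm with shift sets in $qW\cdot[\delta^{O(1)}N/W]$ for some bounded $q$, not literally $W\cdot[N/W]$. The paper converts this to the stated $U^s_{W\cdot[N/W]}$-norm, with only polynomial loss in $\delta$, via the elementary rescaling lemmas in its appendix (\cref{lem:mod-up}, \cref{lem:rescale-down}, \cref{cor:rescale-loss}, \cref{cor:mod-loss}). Your remark that the shift sets ``naturally align'' with $W\cdot[N/W]$ is the right intuition but would need these lemmas (or equivalent) to be made rigorous.
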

\begin{proof}
By shifting the $f_i$, we may assume that they are supported in $[2CN]$ instead. The result is then, essentially, an immediate consequence of \cite[Theorem~6.1]{Pel20}. For $f_1$, apply the result with $P_1(y) = 2Wy^2 + y$ and $P_2(y) = 4Wy^2 + 2y$; for $f_2$, apply the result with $P_1(y) = -2Wy^2 - y$ and $P_2(y) = 2Wy^2 + y$; and 
for $f_3$, apply the result with $P_1(y) = -2Wy^2 - y$ and $P_2(y) = -4Wy^2 - 2y$. In each case, we take $M = \sqrt{N/W}$ and the desired result follows, except that the box-norm may have shift parameters lying in $qW\cdot[\delta^{O(1)}N/W]$ with $q\ll 1$. By applying \cref{lem:mod-up,lem:rescale-down}, we may assume that the shift parameters are the same and thus instead a Gowers norm with parameter $qW\cdot[\delta^{O(1)}N/W]$ with $q\ll 1$. This Gowers norm can be upgraded to the one in the conclusion of the proposition using \cref{cor:rescale-loss,cor:mod-loss}.
\end{proof}

By combining \cref{prop:PET+Quant-output} with our key degree-lowering result, we can deduce that $\Lambda^W$ is controlled by the $U^2$-norm. For the statements below, we let $\exp^k$ denote the $k$-fold iterated exponential. 
\begin{proposition}\label{prop:degree-lower-output}
There exists a positive integer $K = K_{\ref{prop:degree-lower-output}}$ such that the following holds. Suppose that $f_1,f_2,f_3\colon\mathbf{Z}\to\mathbf{C}$ are $1$-bounded functions with $\on{supp}(f_i)\subseteq [CN]$ for $i=1,2,3$, $W$, $M$, and $P$ are as in \eqref{eq:Wdef}, and $N\ge W^{\Omega(1)} \exp^{K}(\delta^{-1})$. If
\[\bigg|\Lambda^{W}(f_1,f_2,f_3)\bigg|\ge \delta MN,\]
then
\[\min_{i\in [3]}\snorm{f_i}_{U^{2}_{W\cdot[N/W]}}^{4}\gg_{C} (\exp^{K}(\delta^{-1}))^{-1}N.\]
\end{proposition}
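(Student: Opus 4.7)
The plan is to iterate the degree-lowering results \cref{prop:degree-lower,prop:degree-lower-2} combined with stashing, using \cref{prop:PET+Quant-output} to seed the induction at the level $U^s$ where $s=s_{\ref{prop:PET+Quant-output}}$, and then to reduce the level one step at a time down to $U^2$.

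More precisely, I would prove by downward induction on $k\in\{2,3,\ldots,s\}$ the following statement: whenever $g_1,g_2,g_3\colon\mathbf{Z}\to\mathbf{C}$ are $1$-bounded, supported in $[\pm C'N]$, and satisfy $|\Lambda^W(g_1,g_2,g_3)|\ge\eta MN$ (with $N$ suitably large), we have $\min_i\snorm{g_i}_{U^k_{W\cdot[N/W]}}^{2^k}\gg_{C'}\eps_k(\eta)\cdot N$ for some explicit function $\eps_k$. The base case $k=s$ is precisely \cref{prop:PET+Quant-output}, with $\eps_s(\eta)=\eta^{O(1)}$. For the inductive step, assuming the claim at level $k\ge 3$, given a triple $(g_1,g_2,g_3)$ as above, I would first apply stashing twice to obtain
\[\left|\Lambda^W(\mc{D}^1(g_2,g_3),\overline{g_2},\overline{g_3})\right|,\ \left|\Lambda^W(\overline{g_1},\overline{g_2},\mc{D}^3(g_1,g_2))\right|\gg\eta^2 NM.\]
Each dual function is $1$-bounded and supported in $[\pm(C'+O(1))N]$, so the inductive hypothesis at level $k$ applied to each stashed triple yields $\snorm{\mc{D}^1(g_2,g_3)}_{U^k_{W\cdot[N/W]}}^{2^k}$ and $\snorm{\mc{D}^3(g_1,g_2)}_{U^k_{W\cdot[N/W]}}^{2^k}$ each $\gg_{C'}\eps_k(\eta^2)\cdot N$. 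Feeding these into \cref{prop:degree-lower} and \cref{prop:degree-lower-2} respectively gives $U^{k-1}$-norm lower bounds of quality $\exp(-\exp(\eps_k(\eta^2)^{-O(1)}))\cdot N$ for $\min_{i\in\{2,3\}}\snorm{g_i}^{2^{k-1}}_{U^{k-1}_{W\cdot[N/W]}}$ and $\min_{i\in\{1,2\}}\snorm{g_i}^{2^{k-1}}_{U^{k-1}_{W\cdot[N/W]}}$. Since $\{2,3\}\cup\{1,2\}=\{1,2,3\}$, taking the smaller of these establishes the claim at level $k-1$ with $\eps_{k-1}(\eta)\asymp\exp(-\exp(\eps_k(\eta^2)^{-O(1)}))$.

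Iterating downward from $k=s$ to $k=3$ produces the desired $U^2$-control after $s-2$ applications. Unrolling the resulting recursion shows that $\eps_2(\delta)^{-1}$ is bounded by $\exp^K(\delta^{-1})$ for some constant $K$ depending only on $s$, yielding the required $K_{\ref{prop:degree-lower-output}}$. The size hypothesis $N\ge W^{\Omega(1)}\exp^K(\delta^{-1})$ is enough to verify the size conditions of \cref{prop:PET+Quant-output,prop:degree-lower,prop:degree-lower-2} at each step. Since the dual map $\mc{D}^2$ is unavailable (per the remark after \cref{prop:degree-lower-2}), the crucial combinatorial observation is that stashing with $\mc{D}^1$ and $\mc{D}^3$ alone still covers the index set $\{1,2,3\}$. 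Apart from this, the main obstacle is essentially bookkeeping: verifying that the support constants of the iterated dual functions remain bounded (they grow by an additive $O(1)$ per iteration, harmless across $s-2$ iterations) and propagating the iterated-exponential loss consistently through the recursion.
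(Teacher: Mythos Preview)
Your proposal is correct and follows essentially the same approach as the paper: downward induction on $k$ from $s=s_{\ref{prop:PET+Quant-output}}$ to $2$, with the base case given by \cref{prop:PET+Quant-output}, and the inductive step carried out by stashing with $\mc{D}^1$ and $\mc{D}^3$, applying the inductive hypothesis to obtain large $U^k$-norms of these duals, and then invoking \cref{prop:degree-lower,prop:degree-lower-2} to pass to $U^{k-1}$, using exactly your observation that $\{2,3\}\cup\{1,2\}=\{1,2,3\}$. Your tracking of the support growth and of the iterated-exponential recursion is slightly more explicit than the paper's presentation but otherwise identical in substance.
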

\begin{proof}
Let $s=s_{\ref{prop:PET+Quant-output}}$. We prove by downwards induction on $k\in\{2,\ldots,s\}$ that given appropriate support and boundedness conditions on functions $h_i\colon\mathbf{Z}\to\mathbf{C}$, we have that $|\Lambda^W(h_1,h_2,h_3)|\ge\delta MN$ implies
\[\min_{i\in[3]}\snorm{h_i}_{U^2_{W\cdot[N/W}}^4\gg_C\exp^{2(s-k)}(\delta^{-O(1)})^{-1}N.\]
For $k=s$, this is \cref{prop:PET+Quant-output}. The result for $k=2$ with $h_i=f_i$ is the desired.

Now suppose that we have established the result for $k\ge 3$ and wish to prove it for $k-1$. Note that
\begin{align*}
\delta MN\ll\Lambda^{W}(h_1,h_2,h_3) &= (2M+1)\sum_{x\in \mathbf{Z}}h_i(x)\mc{D}^{1}(h_{2},h_{3})(x)\\
&\le (2M+1)\bigg(\sum_{x\in \mathbf{Z}}|h_i(x)|^2\bigg)^{1/2}\bigg(\sum_{x\in \mathbf{Z}}|\mc{D}^{1}(h_{2},h_{3})(x)|^2\bigg)^{1/2}\\
&\ll M\cdot N^{1/2}\cdot \Lambda^{W}(\ol{\mc{D}^{1}(h_{2},h_{3})},h_2,h_3)^{1/2}
\end{align*}
and, therefore,
\[\Lambda^{W}(\ol{\mc{D}^{1}(h_{2},h_{3})},h_2,h_3)\gg\delta^{2}MN.\]
Now apply the inductive hypothesis with $h_1$ replaced by $\ol{\mc{D}^{1}(h_{2},h_{3})}$ (which still is bounded and with appropriate support) and $\delta$ replaced by $\Omega(\delta^2)$. We deduce
\[\snorm{\mc{D}^{1}(h_{2},h_{3})}_{U^k_{W\cdot[N/W]}}^{2^k}\gg_C\exp^{2(s-k)}(\delta^{-O(1)})^{-1}N.\]
Similarly, we have
\[\snorm{\mc{D}^{3}(h_{1},h_{2})}_{U^k_{W\cdot[N/W]}}^{2^k}\gg_C\exp^{2(s-k)}(\delta^{-O(1)})^{-1}N.\]
(Note that the $O(1)$ exponents here may decay with each induction step, but $s=s_{\ref{prop:PET+Quant-output}}$ is an absolute constant so this will remain bounded at the end.)

Now using \cref{prop:degree-lower,prop:degree-lower-2}, it follows that
\[\min_{i\in[3]}\snorm{h_i}_{U^{k-1}_{W\cdot[N/W]}}^{2^{k-1}}\gg_C\exp^{2(s-k) + 2}(\delta^{-O(1)})^{-1}N,\]
using that $O_k(1)=O(1)$ as $k$ is bounded, which completes the induction.
\end{proof}

\subsection{Completing the proof}
We are now in position to complete the proof. The following result states that, for $1$-bounded functions, the counting operators $\Lambda^{W}$ and $\Lambda^{\mr{Model}}$ agree up to a universal scaling factor. 
\begin{proposition}\label{prop:transference}
There exists an integer $K = K_{\ref{prop:transference}}>0$ such that the following holds. Suppose $f_1,f_2,f_3\colon\mathbf{Z}\to\mathbf{C}$ are $1$-bounded functions such that $\on{supp}(f_i)\subseteq [N]$ for $i=1,2,3$, $W$, $M$, $w$, and $P$ are as in \eqref{eq:Wdef}, and $N\gg W^{\Omega(1)}$ and $W\gg\exp^{K}(\delta^{-1})$. Then, 
\[\bigg|(NW)^{1/2}\Lambda^{W}(f_1,f_2,f_3) - \Lambda^{\mr{Model}}(f_1,f_2,f_3)\bigg|\le\delta N^2.\]
\end{proposition}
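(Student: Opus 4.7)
The plan is to argue by contradiction, following the outline in \cref{sec:sketch}: assume $|\wt\Lambda(f_1,f_2,f_3)|\geq\delta N^2$, first establish $U^2$-norm control of $\wt\Lambda$, then apply a stashing-type maneuver with the $U^2$-inverse theorem to reduce to the case that each $f_i$ is a pure linear phase times $1_{[N]}$, and finally rule out this reduced case by a direct Fourier-analytic computation.

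\textbf{Steps 1--2 ($U^2$-control and reduction to linear phases).} By the triangle inequality, either $(NW)^{1/2}|\Lambda^W(f_1,f_2,f_3)|\geq\delta N^2/2$ or $|\Lambda^{\mr{Model}}(f_1,f_2,f_3)|\geq\delta N^2/2$. In the first case, since $(NW)^{1/2}\asymp N/M$, \cref{prop:degree-lower-output} yields $\min_i\snorm{f_i}_{U^2_{W\cdot[N/W]}}^4\gg(\exp^K(\delta^{-1}))^{-1}N$. In the second case, \cref{lem:model-control} combined with the box-norm reparametrization results of \cref{sec:random} yields the analogous bound (losing at worst $\poly(W)$, which is absorbed into the hypothesis $W\gg\exp^K(\delta^{-1})$). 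Denote the resulting common lower bound by $\eta N$. Next, by the $U^2$-inverse theorem (\cref{lem:U2-inver}), some $f_i$ has $|\widehat{f_i}(\alpha_i)|\gg\eta^{O(1)}N$; writing $f_i=c_i\,e(\alpha_i\cdot)1_{[N]}+f_i^{\flat}$ with $|c_i|\ll 1$ and $\snorm{f_i^{\flat}}_{U^2_{W\cdot[N/W]}}$ strictly decreased, the contribution of $f_i^{\flat}$ to $\wt\Lambda$ is controlled by the $U^2$-control just derived. Iterating through all three slots --- applying Cauchy--Schwarz on the remaining two slots at each stage, analogously to the stashing discussion following \cref{def:dual} --- produces $\alpha_1,\alpha_2,\alpha_3\in\mathbf{T}$ and constants $|c_j|\ll 1$ such that
\[\bigl|\wt\Lambda\bigl(c_1e(\alpha_1\cdot)1_{[N]},c_2e(\alpha_2\cdot)1_{[N]},c_3e(\alpha_3\cdot)1_{[N]}\bigr)\bigr|\gg\eta^{O(1)}N^2.\]

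\textbf{Step 3 (Direct Fourier computation).} Expanding both counting operators on these inputs, the sum over $x$ factors out as $S_h(\beta_0):=\sum_x e(\beta_0 x)1_{[N]}(x)1_{[N]}(x+h)1_{[N]}(x+2h)$ with $\beta_0:=\alpha_1+\alpha_2+\alpha_3$; since $S_h(\beta_0)$ varies slowly with $h$, Abel summation reduces matters to the uniform estimate
\[\sup_{\beta\in\mathbf{T}}\bigg|(NW)^{1/2}\sum_{|k|\leq M}e(\beta P(k))-\sum_{d\in\mathbf{Z}}\nu(d)e(\beta d)\bigg|\ll\eta^{O(1)}N\]
with $\beta:=\alpha_2+2\alpha_3$. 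This is a circle-method comparison: the weight $\nu(d)=\sqrt{N/d}\,\mbm{1}_{1\leq d\leq N}$ is designed precisely so that its Fresnel-type asymptotic $\widehat{\nu}(\beta)\approx 2\sqrt{N}\int_0^{\sqrt{N}}e(\beta s^2)\,ds$ matches the Weyl/stationary-phase asymptotic of $\sum_{|k|\leq M}e(\beta P(k))$ after the change of variable $d\leftrightarrow P(k)\approx Wk^2$. On minor arcs, both sums enjoy square-root cancellation and the leading-order asymptotics agree; on major arcs $\beta\approx a/q$, the polynomial $P(k)=Wk^2+k$ contributes a Gauss-type arithmetic factor $q^{-1}\sum_{r\bmod q}e(aP(r)/q)$, which by \cref{prop:Hensel} is a complete permutation sum on $\mathbf{Z}/q\mathbf{Z}$ (using that the linear coefficient of $P$ is coprime to $W$, so the relevant small $q$ must share a factor with $W$) and hence is negligible. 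The $W$-trick therefore kills any major-arc residue once $W$ is sufficiently large; the detailed estimates are collected in \cref{sec:circle}. Taking $W\gg\exp^K(\delta^{-1})$ for $K$ large enough produces the required error bound, contradicting the lower bound from Step~2.

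The main obstacle is Step 3: one must quantify the Fresnel--Weyl match uniformly in $\beta$ and ensure the $W$-trick annihilates the major-arc residues with sufficient quantitative strength. This is why $W$ is forced to be doubly exponentially large in $\delta^{-1}$, in line with the hypothesis of the proposition; the $\eta N$ arising from Step~1 is itself already of the form $(\exp^K(\delta^{-1}))^{-1}N$, so the $W$-dependence must dominate it.
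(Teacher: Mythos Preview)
Your overall plan---contradiction, $U^2$-control of $\wt\Lambda$, reduction to linear phases, Fourier comparison---matches the paper's. The gap is in how you pass from $U^2$-control to ``each $f_i$ is a single linear phase''.

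The decomposition $f_i=c_ie(\alpha_i\cdot)1_{[N]}+f_i^\flat$ with $\snorm{f_i^\flat}_{U^2}$ ``strictly decreased'' does not do what you need. Your $U^2$-control has iterated-exponential loss: $|\wt\Lambda|\ge\delta N^2$ only forces $\snorm{f_i}_{U^2_{W\cdot[N/W]}}^4\ge(\exp^K(\delta^{-1}))^{-1}N$. Contrapositively, to make $|\wt\Lambda(\ldots,f_i^\flat,\ldots)|<\delta N^2$ you would need $\snorm{f_i^\flat}_{U^2}^4$ below this same threshold, not merely smaller than before. Peeling off one Fourier mode decreases the $U^2$-norm only polynomially, so any energy-increment iteration would produce roughly $\exp^K(\delta^{-1})$ phases, and the triple sum over them cannot be beaten by the $w^{-1/2}$ saving in Step~3. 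This is not a technicality you can sweep into the hypothesis on $W$: the number of phases and the required $w$ both scale like $\exp^K(\delta^{-1})$, so the argument never closes.

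The paper sidesteps this by stashing \emph{on $\wt\Lambda$ itself} before invoking the inverse theorem. One introduces the difference dual
\[
D^{1,*}(f_2,f_3)(x)=N^{-1}\Big((NW)^{1/2}\!\!\sum_{|k|\le M}\!f_2(x{+}P(k))f_3(x{+}2P(k))-\sum_d\nu(d)f_2(x{+}d)f_3(x{+}2d)\Big),
\]
so that $\wt\Lambda(g,f_2,f_3)=N\sum_xg(x)D^{1,*}(f_2,f_3)(x)$ for every $g$. Cauchy--Schwarz gives $|\wt\Lambda(D^{1,*},f_2,f_3)|\gg\delta^2N^2$; now your triangle-inequality step combined with \cref{lem:model-control} or \cref{prop:degree-lower-output} shows $\snorm{D^{1,*}}_{U^2_{W\cdot[N/W]}}$ is large. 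A \emph{single} application of the $U^2$-inverse theorem---necessarily per residue class mod $W$, since that is what $U^2_{W\cdot[N/W]}$ decomposes into---produces a $1$-bounded $\wt f_1$ which is a linear phase on each class $j\bmod W$ and for which $\sum_x\wt f_1(x)D^{1,*}(x)$, hence $\wt\Lambda(\wt f_1,f_2,f_3)$, is large. No iteration, no blow-up. Repeating in slots $2$ and $3$ yields the structured $\wt f_i$. Because these are per-class phases rather than global ones, the final exponential-sum comparison is naturally organised by residue class $k\in[W]$, which is exactly what \cref{lem:Linfinity} provides (it bounds $\wh{\nu^*(W\cdot+k)-\nu(W\cdot+k)}$, not $\wh{\nu^*-\nu}$).
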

\begin{proof}
Assume for the sake of contradiction that 
\[\bigg|(NW)^{1/2}\Lambda^{W}(f_1,f_2,f_3) - \Lambda^{\mr{Model}}(f_1,f_2,f_3)\bigg|\ge\delta N^2,\]
and define
\[\wt{\Lambda}(f_1,f_2,f_3) = (NW)^{1/2}\Lambda^{W}(f_1,f_2,f_3) - \Lambda^{\mr{Model}}(f_1,f_2,f_3).\]
For this proof, define the modified dual functions
\begin{align*}
D^{1,\ast}(f_2,f_3)(x) &= N^{-1}((NW)^{1/2}\sum_{|k|\le M}f_2(x+P(k))f_3(x+2P(k)) - \sum_{d\in \mathbf{Z}}f_2(z+d)f_3(z+2d)\nu(d)),\\
D^{2,\ast}(f_3,f_1)(x) &= N^{-1}((NW)^{1/2}\sum_{|k|\le M}f_1(x-P(k))f_3(x+P(k)) - \sum_{d\in \mathbf{Z}}f_1(z-d)f_3(z+d)\nu(d)),\\
D^{3,\ast}(f_1,f_2)(x) &= N^{-1}((NW)^{1/2}\sum_{|k|\le M}f_1(x-2P(k))f_2(x-P(k)) - \sum_{d\in \mathbf{Z}}f_1(z-2d)f_2(z-d)\nu(d)).
\end{align*}
By an application of the Cauchy--Schwarz inequality analogous to that used in \cref{prop:degree-lower-output} (and at the end of \cref{sec:notation}), we have
\[\bigg|\wt{\Lambda}(D^{1,\ast}(f_2,f_3),f_2,f_3)\bigg| \gg \delta^2N^2.\]
By the triangle inequality, we have that 
\[\bigg|\Lambda^{\mr{Model}}(D^{1,\ast}(f_2,f_3),f_2,f_3)\bigg|\gg \delta^2N^2\quad\text{or}\quad\bigg|(NW)^{1/2}\Lambda^{W}(D^{1,\ast}(f_2,f_3),f_2,f_3)\bigg|\gg \delta^2N^2.\]
Therefore, by \cref{lem:model-control}, \cref{prop:degree-lower-output}, and \cref{lem:mod-up}, we have 
\[\snorm{D^{1,\ast}(f_2,f_3)}_{U^{2}_{W\cdot[ N/W]}}^{4}\gg \exp^{K_{\ref{prop:degree-lower-output}}}(\delta^{-O(1)})N.\]
Applying the $U^2$-inverse theorem \cref{lem:U2-inver} to each progression of spacing $W$ and passing to the interval $[\pm C_1 N]$ (which contains the support of $D^{1,\ast}(f_2,f_3)$), there exist constants $\alpha_{j,1},\beta_{j,1}$ for each $j\in [W]$ such that 
\[\wt{f}_1(x) := \mbm{1}_{x\in [\pm C_1N]}\sum_{j\in[W]}\mbm{1}_{W\mid(x-j)}e(\alpha_{j,1}x + \beta_{j,1})\]
satisfies
\[\sum_{x\in \mathbf{Z}} \wt{f}_1(x) D^{1,\ast}(f_2,f_3)(x)\gg \exp^{K_{\ref{prop:degree-lower-output}}}(\delta^{-O(1)})^{-1}N.\]
By construction, this implies that 
\[|\wt{\Lambda}(\wt{f}_1,f_2,f_3)|\ge \exp^{K_{\ref{prop:degree-lower-output}}}(\delta^{-O(1)})^{-1}N^2.\]
Repeating this procedure, we find $\alpha_{j,i},\beta_{j,i}$ for each $j\in [W]$ such that defining
\[\wt{f}_i(x) := \mbm{1}_{x\in[\pm C_i N]}\sum_{j\in [W]}\mbm{1}_{W\mid(x-j)}e(\alpha_{j,i}x + \beta_{j,i})\]
for $i\in\{2,3\}$ (with appropriate absolute constants $C_i$), we have 
\[|\wt{\Lambda}(\wt{f}_1,\wt{f}_2,\wt{f}_3)|\gg \exp^{3K_{\ref{prop:degree-lower-output}}}(\delta^{-O(1)})^{-1}N^2.\]
Unwinding the definition of $\Lambda^{W}$ and $\Lambda^{\mr{Model}}$ (recall $\nu$ is supported on $[1,N]$), we have 
\begin{align*}
&\bigg|\sum_{\substack{x\in \mathbf{Z}\\d\in[N]}}\left((NW)^{1/2}\wt{f}_1(x)\wt{f}_2(x+d)\wt{f}_3(x+2d)\mbm{1}_{d\in \{P(k):k\in \mathbf{Z}\}\cap [N]} - \wt{f}_1(x)\wt{f}_2(x+d)\wt{f}_3(x+2d)\nu(d)\right)\bigg|\\
&\qquad\gg\exp^{3K_{\ref{prop:degree-lower-output}}}(\delta^{-O(1)})^{-1}N^2.
\end{align*}
Define $\nu^{\ast}(d) = (NW)^{1/2}\cdot |\{d = P(k)\colon k\in \mathbf{Z}\text{ and } |k|\le M\}|$. Since $P$ is injective on $\mathbf{Z}$ so this set is only size $0$ or $1$. We have
\begin{align*}
&\bigg|\sum_{\substack{x\in \mathbf{Z}\\d\in \mathbf{Z}}}\wt{f}_1(x)\wt{f}_2(x+d)\wt{f}_3(x+2d)(\nu^{\ast}(d) - \nu(d))\bigg|\gg \exp^{3K_{\ref{prop:degree-lower-output}}}(\delta^{-O(1)})^{-1}N^2.
\end{align*}
Note that 
\begin{align*}
&\bigg|\sum_{\substack{x\in \mathbf{Z}\\d\in \mathbf{Z}}}\wt{f}_1(x)\wt{f}_2(x+d)\wt{f}_3(x+2d)(\nu^{\ast}(d) - \nu(d))\bigg|\\
&\le \sum_{k,\ell\in [W]}\bigg|\sum_{\substack{x\in \mathbf{Z}\\d\in \mathbf{Z}\\W\mid(x-\ell)\\W\mid(d-k)}}\wt{f}_1(x)\wt{f}_2(x+d)\wt{f}_3(x+2d)(\nu^{\ast}(d) - \nu(d))\bigg|\\
&\le \sum_{k,\ell\in [W]}\bigg|\sum_{\substack{x\in \mathbf{Z}\\d\in \mathbf{Z}}}\wt{f}_1(Wx + \ell)\wt{f}_2(W(x+d) + \ell + k )\wt{f}_3(W(x+d) + \ell + 2k)(\nu^{\ast}(Wd + k) - \nu(Wd + k))\bigg|\\
&\le W^2\sup_{\substack{\alpha_1,\alpha_2,\alpha_3\in \mathbf{T}\\k,\ell\in[W]}}\bigg|\sum_{\substack{x\in \mathbf{Z}\\d\in \mathbf{Z}}}e(\alpha_1x)\mbm{1}_{|Wx + \ell|\le C_1N}e(\alpha_2(x+d))\mbm{1}_{|W(x+d) + \ell + k|\le C_2N}\\
&\qquad\qquad\qquad\qquad e(\alpha_3(x+2d))\mbm{1}_{|W(x+2d) + \ell + 2k|\le C_3N}(\nu^{\ast}(Wd + k) - \nu(Wd + k))\bigg|.
\end{align*}
Letting $\tau_{i,\alpha_i}(x) = e(\alpha_ix)\mbm{1}_{|x|\le C_iNW^{-1}}$, we have 
\[\sup_{\substack{\alpha_1,\alpha_2,\alpha_3\in \mathbf{T}\\ k\in [W]}}\bigg|\sum_{\substack{x\in \mathbf{Z}\\d\in \mathbf{Z}}}\tau_{1,\alpha_1}(x)\tau_{2,\alpha_2}(x+d)\tau_{3,\alpha_3}(x+2d)(\nu^{\ast}(Wd + k) - \nu(Wd + k))\bigg|\gg\exp^{3K_{\ref{prop:degree-lower-output}}}(\delta^{-O(1)})^{-1}N^2W^{-2}. \]
We now take a Fourier transform. Defining $\wt{\nu}_k(d) = (\nu^{\ast}(Wd + k) - \nu(Wd + k))$ we have
\begin{align*}
\bigg|\sum_{\substack{x\in \mathbf{Z}\\d\in \mathbf{Z}}}&\tau_{1,\alpha_1}(x)\tau_{2,\alpha_2}(x+d)\tau_{3,\alpha_3}(x+2d)(\nu^{\ast}(Wd + k) - \nu(Wd + k))\bigg|\\
&=\bigg|\int_{\mathbf{T}^2}\wh{\tau_{1,\alpha_1}}(\Theta_1)\wh{\tau_{2,\alpha_2}}(\Theta_2)\wh{\tau_{3,\alpha_3}}(-\Theta_1-\Theta_2)\wh{\wt{\nu}}_k(\Theta_2)~d\Theta_1d\Theta_2\bigg|\\
&\le \snorm{\wh{\wt{\nu}}_k}_{\infty} \cdot \int_{\mathbf{T}^2}|\wh{\tau_{1,\alpha_1}}(\Theta_1)|\cdot |\wh{\tau_{2,\alpha_2}}(\Theta_2)|\cdot |\wh{\tau_{3,\alpha_3}}(-\Theta_1-\Theta_2)|~d\Theta_1d\Theta_2\\
&\le \snorm{\wh{\wt{\nu}}_k}_{\infty} \bigg(\int_{\mathbf{T}^2}|\wh{\tau_{1,\alpha_1}}(\Theta_1)|^{3/2}\cdot |\wh{\tau_{2,\alpha_2}}(\Theta_2)|^{3/2}~d\Theta_1d\Theta_2\bigg)^{1/3}\\
&\qquad\qquad\bigg(\int_{\mathbf{T}^2}|\wh{\tau_{1,\alpha_1}}(\Theta_1)|^{3/2}\cdot |\wh{\tau_{3,\alpha_3}}(-\Theta_1-\Theta_2)|^{3/2}~d\Theta_1d\Theta_2\bigg)^{1/3}\\
&\qquad\qquad\bigg(\int_{\mathbf{T}^2}|\wh{\tau_{2,\alpha_2}}(\Theta_2)|^{3/2}\cdot |\wh{\tau_{3,\alpha_3}}(-\Theta_1-\Theta_2)|^{3/2}~d\Theta_1d\Theta_2\bigg)^{1/3} \\
&=\snorm{\wh{\wt{\nu}}_k}_{\infty} \prod_{i\in [3]}\bigg(\int_{\mathbf{T}}|\wh{\tau_{i,\alpha_i}}(\Theta)|^{3/2}d\Theta\bigg)^{2/3}\ll\snorm{\wh{\wt{\nu}}_k}_{\infty}\cdot (N/W),
\end{align*}
where in the final line we have used standard fact that the $L^p$-norm of the Fourier transform of an interval of length $N$ is $\ll_p N^{(p-1)/p}$ for $p>1$. However, by \cref{lem:Linfinity}, we have 
\[\sup_{k\in [W]}\snorm{\wh{\wt{\nu}}_k}_{\infty}\ll \frac{N}{W}\cdot \frac{1}{\sqrt{w}}.\]
We have our desired contradiction if $w$ (i.e., $W$) is sufficiently large with respect to $\delta^{-1}$. 
\end{proof}

The main result now follows in a straightforward manner. 
\begin{proof}[{Proof of \cref{thm:main}}]
Let $S$ be a subset of density $\delta$ in $[N]$ and $W$ be a sufficiently large parameter to be chosen at the end of the proof. By the pigeonhole principle, there exists $j\in[4W]$ such that $S_j = S\cap (4W\mathbf{Z} + j)$ has size at least $\delta N/(4W)$. Set $S_{j}^{\ast} = (S_j-j)/(4W) \subseteq[N/(4W)]$. Note that, since $4W^2y^2+4Wy=(2Wy+1)^2-(2Wy+1)$, differences of the form $((2Wy+1)^2-1)/(4W) = Wy^2 + y$ in the set $S_{j}^{\ast}$ lift to differences of the form $z^2-1$ in $S$. By \cref{lemma:lower-bound}, we have 
\[\Lambda^{\mr{Model}}(\mbm{1}_{S_{j}^{\ast}},\mbm{1}_{S_{j}^{\ast}},\mbm{1}_{S_{j}^{\ast}})\ge \exp(-\log(2/\delta)^{O(1)})N^2W^{-2}.\]
Taking $N\ge W^{\Omega(1)}\exp^{K+1}(\delta^{-1})$ and $W\ge \exp^{K+1}(\delta^{-1})$ with $K = K_{\ref{prop:transference}}$, \cref{prop:transference} with $N$ replaced by $N/W$ (note this alters the value of $M$) and $\delta$ appropriately changed implies
\[\Lambda^W(\mbm{1}_{S_{j}^{\ast}},\mbm{1}_{S_{j}^{\ast}},\mbm{1}_{S_{j}^{\ast}})\ge\exp(-\log(2/\delta)^{O(1)})N^{3/2}W^{-2}.\]
However, if $S$ is free of nontrivial progressions of the form $x, x+y^2-1, x+2(y^2-1)$, we have
\[\Lambda^{W}(\mbm{1}_{S_{j}^{\ast}},\mbm{1}_{S_{j}^{\ast}},\mbm{1}_{S_{j}^{\ast}})\ll N/W.\]
Therefore, if $\delta \ge \log_{K+2}(N)^{-1}$, taking $N=W^{\Omega(1)}\exp^{K+1}(\delta^{-1})$ and $W=\exp^{K+1}(\delta^{-1})$, we obtain a nontrivial progression of the form $x,x+y^2-1,x+2(y^2-1)$ in $S$, as desired.
\end{proof}

\bibliographystyle{amsplain0-full.bst}
\bibliography{main.bib}

\appendix
\section{Conventions regarding nilsequences and effective equidistribution}\label{sec:nilmanifold}

We begin this appendix by giving the precise definition of the complexity of a nilmanifold; this definition is exactly as in \cite[Definition~6.1]{TT21}.
\begin{definition}\label{def:nilman}
Let $s\ge 1$ be an integer and let $K>0$. A \emph{filtered nilmanifold $G/\Gamma$ of degree $s$ and complexity at most $K$} consists of the following:
\begin{itemize}
    \item a nilpotent, connected, and simply connected Lie group $G$ of dimension $m$, which can be identified with its Lie algebra $\log G$ via the exponential map $\exp\colon\log G\to G$;
    \item a filtration $G_{\bullet} = (G_i)_{i\ge 0}$ of closed connected subgroups $G_i$ of $G$ with 
    \[G = G_0 = G_1\geqslant G_1\geqslant \cdots\geqslant G_s\geqslant G_{s+1} = \mr{Id}_G\]
    such that $[G_i,G_j]\subseteq G_{i+j}$ for all $i,j\ge 0$;
    \item a discrete cocompact subgroup $\Gamma$ of $G$; and
    \item a linear basis $\mathcal{X}=\{X_1,\ldots, X_{m}\}$ of $\log G$, known as a \emph{Mal'cev basis}.
\end{itemize}
We, furthermore, require that this data obeys the following conditions:
\begin{enumerate}
    \item for $1\le i,j\le m$, one has Lie algebra relations
    \[[X_i,X_j] = \sum_{i,j<k\le m}c_{ijk}X_k\]
    for rational numbers $c_{ijk}$ of height at most $K$;
    \item for each $1\le i\le s$, the Lie algebra $\log G_i$ is spanned by $\{X_j\colon m-\dim(G_i)<j\le m\}$; and
    \item the subgroup $\Gamma$ consists of all elements of the form $\exp(t_1X_1)\cdots\exp(t_{m}X_{m})$ with $t_i\in \mathbf{Z}$.
\end{enumerate}
\end{definition}
We note that the conditions imply $[G,G_s]=\mr{Id}_G$, i.e., $G_s$ is contained in the center of $G$ (commutes with every element).

Next, we will define polynomial sequences in filtered nilpotent groups. This concrete definition is equivalent (by~\cite[Lemma 6.7]{GT12}) to the one given in~\cite{GT12}.
\begin{definition}\label{def:polyseq}
We adopt the conventions of \cref{def:nilman}. Let $G$ be a filtered nilpotent group of degree $s$. A function $g\colon\mathbf{Z}\to G$ is a \emph{polynomial sequence} if there exist elements $g_i\in G_{i}$ for $i=0,\ldots,s$ such that
    \begin{equation*}
        g(n)=g_0g_1^{\binom{n}{1}}\cdots g_s^{\binom{n}{s}},
    \end{equation*}
    where $\binom{n}{i}=\frac{1}{i!}\prod_{j=0}^{i-1}(n-j)$, for all $n\in\mathbf{Z}$
\end{definition}
We will denote the set of polynomial sequences $g\colon\mathbf{Z}\to G$ relative to the filtration $G_\bullet$ of $G$ by $\on{Poly}(\mathbf{Z},G_\bullet)$. It turns out that $\on{Poly}(\mathbf{Z},G_\bullet)$ is a group under the natural multiplication of sequences--this is due to Lazard~\cite{Lazard54} and Leibman~\cite{Leibman98,Leibman02}.

We will also require the definition of rational points, sequences, and subgroups. 
\begin{definition}\label{def:rational}
We adopt the conventions of \cref{def:nilman}. We say that $\gamma\in G$ is \emph{$Q$-rational} if there exists an integer $0<r\le Q$ such that $\gamma^{r}\in \Gamma$. A \emph{$Q$-rational point in $G/\Gamma$} is any point of the form $\gamma \Gamma$ for some $\gamma\in G$ that is $Q$-rational. A sequence $(\gamma(n))_{n=1}^\infty$ in $G$ is \emph{$Q$-rational} if all elements in the sequence are $Q$-rational.

Finally, we say a closed connected subgroup $G'$ of $G$ is \emph{$Q$-rational relative to $\mc{X}$} if its Lie algebra $\mf{g}'$ is spanned by linear combinations of the form $\sum_{i\in[m]}a_iX_i$ with $a_1,\ldots,a_m\in\mathbf{Q}$ all of height at most $Q$.
\end{definition}

Now we can define Mal'cev coordinates, the explicit metrics on $G$ and $G/\Gamma$ used in our work, and the precise definition of the Lipschitz norm of functions on $G/\Gamma$. These definitions are exactly as in \cite[Appendix~A]{GT12}.
\begin{definition}\label{def:Lip}
We adopt the conventions of \cref{def:nilman}. Given a Mal'cev basis $\mc{X}$ and $g\in G$, there exists $(u_1,\ldots,u_m)\in \mathbf{R}^m$ such that 
\[g = \exp(u_1X_1)\cdots\exp(u_mX_m),\]
and we define the \emph{Mal'cev coordinates $\psi=\psi_{\mathcal{X}}\colon G\to\mathbf{R}^m$ for $g$ relative to $\mc{X}$} by
\[\psi(g) := (u_1,\ldots, u_m).\]
We then define a metric $d = d_{\mc{X}}$ on $G$ by
\[d(x,y) := \bigg\{\sum_{i=1}^{n}\min(|\psi(x_ix_{i+1}^{-1})|,|\psi(x_{i+1}x_{i}^{-1})|)\colon n\in\mathbf{N}, x_1,\ldots,x_{n+1}\in G, x_1 = x, x_{n+1} = y\bigg\},\]
where $|\cdot|$ denotes the $\ell^\infty$-norm on $\mathbf{R}^m$, and define a metric on $G/\Gamma$ by
\[d(x\Gamma,y\Gamma) = \inf_{\gamma,\gamma'\in\Gamma}d(x\gamma,y\gamma').\]
Furthermore, for any function $F\colon G/\Gamma\to\mathbf{C}$, we define 
\[\snorm{F}_{\mr{Lip}} := \snorm{F}_{\infty} + \on{sup}_{\substack{x,y\in G/\Gamma \\ x\neq y}}\frac{|F(x)-F(y)|}{d(x,y)}.\]
\end{definition}
We now define the notion of equidistribution of a sequence on $G/\Gamma$ which we will require.
\begin{definition}\label{def:equidistributed}
Given a length $N$, a sequence $(g(n)\Gamma)_{n\in\Gamma}$ is $\delta$-equidistributed if for all Lipschitz functions $F\colon G/\Gamma\to\mbf{C}$ we have that 
\[\bigg|\mathbf{E}_{n\in[N]}F(g(n)\Gamma) - \int_{G/\Gamma}F\bigg|\le \delta \snorm{F}_{\mr{Lip}}.\]
\end{definition}

We will require the notion of a horizontal character and the notion of a function $F$ having a vertical frequency; our definitions are exactly as in \cite[Definitions~1.5,~3.3,~3.4,~3.5]{GT12}.
\begin{definition}\label{def:characters}
Given a filtered nilmanifold $G/\Gamma$, the \emph{horizontal torus} is defined to be \[(G/\Gamma)_{\mr{ab}}:=G/[G,G]\Gamma.\] A \emph{horizontal character} is a continuous homomorphism $\eta\colon G\to\mathbf{T}$ that annihilates $\Gamma$; such characters may be equivalently viewed as characters on the horizontal torus. A horizontal character is \emph{nontrivial} if it is not identically zero. 

Furthermore, if the nilmanifold $G/\Gamma$ has degree $s$, the vertical torus is defined to be \[G_s/(G_s\cap \Gamma).\] A \emph{vertical character} is a continuous homomorphism $\xi\colon G_s\to\mathbf{T}$ that annihilates $\Gamma\cap G_s$. Setting $m_s = \dim G_s$, one may use the last $m_s$ coordinates of the Mal'cev coordinate map to identify $G_s$ and $G_s/(G_s\cap \Gamma)$ with $\mathbf{R}^{m_s}$ and $\mathbf{R}^{m_s}/\mathbf{Z}^{m_s}$, respectively. Thus, we may identify any vertical character $\xi$ with a unique $k\in \mathbf{Z}^{m_s}$ such that $\xi(x) = k\cdot x$ under this identification $G_s/(\Gamma\cap G_s) \cong \mathbf{R}^{m_s}/\mathbf{Z}^{m_s}$. We refer to $k$ as the \emph{frequency} of the character $\xi$, we write $|\xi| :=\snorm{k}_{\infty}$ to denote the magnitude of the frequency $\xi$, and say that a function $F\colon G/\Gamma\to\mathbf{C}$ \emph{has a vertical frequency $\xi$} if 
\[F(g_s\cdot x) = e(\xi(g_s))F(x)\]
for all $g_s\in G_s$ and $x\in G/\Gamma$.
\end{definition}

Finally, we will require the definition of the smoothness norm of a polynomial sequence $\mathbf{Z}\to\mathbf{T}$.
\begin{definition}\label{def:smooth}
Any polynomial sequence $g\colon\mathbf{Z}\to\mathbf{T}$ can be expressed uniquely as \[g(n) = \sum_{i=0}^{d}\alpha_i\binom{n}{i}\]
with $\alpha_0,\ldots,\alpha_d\in\mathbf{T}$~\cite[Exercise 1.6.11]{Tao12}.
We then define 
\[\snorm{g}_{C^{\infty}[N]} := \max_{1\le j\le d}N^j\snorm{\alpha_j}_{\mathbf{T}}.\]
\end{definition}

We will need the fact that any Lipschitz function of a nilsequence can be well-approximated by a sum of vertical characters. The statement we require is, essentially, \cite[Lemma~A.6]{Len23}; our proof closely follows \cite[Lemma~3.7]{GT12}, given a sufficiently explicit estimate for approximating functions on the torus as a sum of characters. We provide a proof below, as the statement in \cite[Lemma~A.6]{Len23} has several typos. To give the proof, we require a version of Fourier expansion on the torus, which we will obtain by quantifying the proof of \cite[Proposition~1.1.13]{Tao12} (or \cite[Lemma~A.9]{GT08}).
\begin{lemma}\label{lem:torus-expand}
Fix $0<\eps<1/2$, and let $F\colon\mathbf{T}^d\to\mathbf{C}$ with $\snorm{F}_{\mr{Lip}}\le L$, where, for $x,y\in \mathbf{T}^d$, we have $d(x,y) = \max_{1\le i\le d}\snorm{x_i-y_i}_{\mathbf{T}}$. There exists an absolute constant $C = C_{\ref{lem:torus-expand}}>0$ such that we can write
\[F(x) = \sum_{|\xi|\le (CLd\eps^{-1})^{2}}c_{\xi}e(\xi\cdot x) + \wt{F}(x)\]
for a choice of $\wt{F},c_\xi$ with $\snorm{\wt{F}}_{\infty}\le \eps$ and $\sum_{\xi}|c_{\xi}|\le (3CLd\eps^{-1})^{5d}$.
\end{lemma}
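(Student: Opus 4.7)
The plan is to smooth $F$ by convolution with the $d$-dimensional Fej\'er kernel of some order $N$, producing a trigonometric polynomial $F_0$ whose Fourier support is contained in $[-N,N]^d$, and then to bound both the approximation error $\snorm{F-F_0}_\infty$ and the $\ell^1$-sum of the Fourier coefficients of $F_0$ explicitly in terms of $L$, $d$, and $\eps$.

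First, I will let $K_N(t) = \tfrac{1}{N}(\sin(\pi N t)/\sin(\pi t))^2$ be the standard one-dimensional Fej\'er kernel, set $K_N^{\otimes d}(y) = \prod_{i=1}^d K_N(y_i)$, and put $F_0 := F \ast K_N^{\otimes d}$. Since $K_N^{\otimes d}$ is nonnegative with total mass $1$ on $\mathbf{T}^d$ and has Fourier coefficients $\prod_i(1-|\xi_i|/N)_+$ supported in $[-N,N]^d$, the Lipschitz hypothesis yields
\[\snorm{F - F_0}_\infty \le L \int_{\mathbf{T}^d} K_N^{\otimes d}(y)\max_i \snorm{y_i}_{\mathbf{T}}\,dy \le Ld\int_\mathbf{T} K_N(t)\snorm{t}_{\mathbf{T}}\,dt \ll \frac{Ld\log(N+1)}{N},\]
where the last estimate comes from splitting $K_N$ into its peak region $|t|\le 1/N$ (where $K_N \le N$) and its tails (where $K_N(t)\le 1/(N\snorm{t}_\mathbf{T}^2)$). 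Choosing $N$ to be a sufficiently large multiple of $Ld\eps^{-1}\log(Ld\eps^{-1}+2)$ forces $\snorm{F - F_0}_\infty \le \eps$ while keeping $N \le (CLd\eps^{-1})^2$ for an absolute constant $C$.

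Next, to bound $\sum_\xi |c_\xi|$ where $c_\xi = \widehat F(\xi)\widehat{K_N^{\otimes d}}(\xi)$, I will use that the Lipschitz hypothesis forces the weak partial derivatives $\partial_i F$ to have $L^\infty$-norm at most $L$, so integration by parts gives $|\widehat F(\xi)| \le L/(2\pi|\xi_i|)$ whenever $\xi_i \neq 0$, and $|\widehat F(\xi)| \le L$ always. Combining with $|\widehat{K_N^{\otimes d}}|\le 1$ gives $|c_\xi| \ll L/(1+\max_i|\xi_i|)$, and grouping by the value $k := \max_i |\xi_i|$ (the number of frequencies at level $k\ge 1$ is at most $2d(2k+1)^{d-1}$) yields
\[\sum_{|\xi|\le N}|c_\xi| \ll L + \sum_{k=1}^N \frac{L\cdot 2d(2k+1)^{d-1}}{k} \ll L(CN)^d.\]
Substituting $N\le (CLd\eps^{-1})^2$ and enlarging $C$ absorbs this into the claimed bound $(3CLd\eps^{-1})^{5d}$, since $2d+1\le 5d$ for all $d\ge 1$.

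The only mildly delicate point is the appeal to bounded weak derivatives of Lipschitz functions on $\mathbf{T}^d$ (i.e.\ membership in $W^{1,\infty}(\mathbf{T}^d)$); this can be bypassed by first pre-convolving $F$ with a tiny additional smoothing bump and passing to the limit. Beyond careful bookkeeping of constants, I do not anticipate any real obstacle.
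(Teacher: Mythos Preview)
Your proposal is correct and follows the same core strategy as the paper: convolve with the $d$-dimensional Fej\'er kernel to obtain a trigonometric polynomial, then bound the approximation error via the Lipschitz hypothesis and control the $\ell^1$-sum of coefficients. The two proofs differ only in how these two bounds are executed.

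For the error $\snorm{F-F_0}_\infty$, you obtain the sharper estimate $O(Ld\log N/N)$ whereas the paper simply records the cruder $O(Ld/\sqrt{R})$; both suffice, and the paper's weaker bound is actually what drives the exponent $2$ in the frequency cutoff $(CLd\eps^{-1})^2$. For the coefficient sum, the paper takes a more elementary route than you do: it just uses the trivial bound $|\wh{F}(\xi)|\le\snorm{F}_\infty\le L$ together with $R^d\mu_R(\xi)\le 1$, and then counts the at most $(2R+1)^d$ frequencies in the support. This gives $\sum|c_\xi|\le L(2R+1)^d$, which already fits inside $(3CLd\eps^{-1})^{5d}$ once $R=\lceil(CLd\eps^{-1})^2\rceil$. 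Your integration-by-parts argument yields a genuinely smaller $\ell^1$-bound, but at the cost of the ``mildly delicate'' appeal to $W^{1,\infty}$ that you flag; the paper avoids that delicacy entirely since the trivial estimate is already enough for the stated conclusion.
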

\begin{proof}
Let $R\ge 1$ be a integer cutoff parameter to be chosen later and define 
\[F_R(x) := \sum_{k\in \mathbf{Z}^d}R^d\mu_R(k)\wh{F}(k)e(k\cdot x),\]
recalling the definition~\eqref{eq:fejer} of $\mu_R$. It is a basic fact from Fourier analysis that
\[F_R(x) = \int_{\mathbf{T}^d}F(y)K_{R}(x-y)dy,\]
where 
\[K_R(y) = \prod_{i=1}^{d}\frac{1}{R}\bigg(\frac{\sin(\pi R y_i)}{\sin(\pi y_i)}\bigg)^2=\prod_{i=1}^d\left(\sum_{|h|\le R}\left(1-\frac{|h|}{R}\right)e(hy_i)\right).\]
Noting that $\int_{\mathbf{T}^d}K_R(y)dy=1$,
\[
\frac{1}{R}\left(\frac{\sin(\pi R y_i)}{\sin(\pi y_i)}\right)^{2}\le C_0 \frac{R}{(1 + R\snorm{y_i}_{\mathbf{T}})^{2}}
\]
for some absolute constant $C_0>0$,
and $F$ is $L$-Lipschitz, we get
\begin{align*}
\snorm{F-F_{R}}_{\infty}&\le \sup_{x\in \mathbf{T}^d}\int_{\mathbf{T}^d}|F(x)-F(y)|\cdot K_{R}(x-y)~dy\\
&\le L\sup_{x\in \mathbf{T}^d}\int_{\mathbf{T}^d}\max_{1\le i\le d}\snorm{x_i-y_i}_{\mathbf{T}}\cdot K_{R}(x-y)~dy\\
&\le L\sup_{x\in \mathbf{T}^d}\int_{\mathbf{T}^d}\sum_{1\le i\le d}\snorm{x_i-y_i}_{\mathbf{T}}\cdot K_{R}(x-y)~dy\\
&\leq Ld\int_{\mathbf{T}}\|y\|_{\mathbf{T}}\cdot \frac{1}{R}\bigg(\frac{\sin(\pi R y)}{\sin(\pi y)}\bigg)^2~dy\leq C \frac{Ld}{\sqrt{R}}
\end{align*}
for some absolute constant $C>0$. The result follows by taking $R=\lceil(CLd\eps^{-1})^2\rceil$, noting that $\snorm{\wh{F}}_{\infty}\le L$, $\mu_{R}(k)\le R^{-d}$, and that $\mu_{R}(k)$ is supported on $k\in \mathbf{Z}^d$ such that $\snorm{k}_{\infty}\le R$.
\end{proof}

We now extend this result to general filtered nilmanifolds by using Fourier analysis on the final nontrivial group of the filtration, $G_s$.
\begin{lemma}\label{lem:vertical-expansion}
Fix $0<\eps<1/2$, let $G/\Gamma$ be a filtered nilmanifold of dimension $m$, degree $s$, and complexity at most $K$, and let $F\colon G/\Gamma\to\mathbf{C}$ satisfy $\snorm{F}_{\mr{Lip}}\le L$. Then one may represent 
\[F(x) = \sum_{|\xi|\le \on{poly}_{m}(LK\eps^{-1})} F_{\xi}(x) + G(x),\]
with
\begin{enumerate}
    \item $\snorm{G}_{\infty}\le \eps$;
    \item $F_{\xi}$ has vertical frequency $\xi$; and
    \item $F_{\xi}$ has Lipschitz norm bounded by $\on{poly}_{m}(LK\eps^{-1})$.
\end{enumerate}
\end{lemma}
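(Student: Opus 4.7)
The plan is to obtain the decomposition by Fourier expansion along the central subgroup $G_s$. Because the filtration has degree $s$, $[G,G_s]\subseteq G_{s+1}=\mr{Id}_G$, so $G_s$ is central in $G$, its left-action descends to a well-defined action on $G/\Gamma$, and the vertical torus $G_s/(G_s\cap\Gamma)$ is identified with $\mathbf{T}^{m_s}$ (where $m_s=\dim G_s\le m$) via the exponential of the last $m_s$ Mal'cev basis vectors. For each $x\in G/\Gamma$ I would define $\phi_x\colon\mathbf{T}^{m_s}\to\mathbf{C}$ by $\phi_x(t):=F(\exp(t_1X_{m-m_s+1}+\cdots+t_{m_s}X_m)\cdot x)$. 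Using \cite[Lemma~A.4,~A.5]{GT12}, the parametrization $t\mapsto\exp(t_1X_{m-m_s+1}+\cdots+t_{m_s}X_m)$, restricted to the standard fundamental domain $[0,1]^{m_s}$, is $\on{poly}_m(K)$-Lipschitz and lands in a $\on{poly}_m(K)$-bounded subset of $G$, so $\phi_x$ is $\on{poly}_m(LK)$-Lipschitz on $\mathbf{T}^{m_s}$.

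Next I would apply \cref{lem:torus-expand} to each $\phi_x$ at scale $\eps$, yielding
\[\phi_x(t)=\sum_{|\xi|\le Q}c_\xi(x)e(\xi\cdot t)+\wt\phi_x(t),\qquad Q=\on{poly}_m(LK\eps^{-1}),\quad\snorm{\wt\phi_x}_{\infty}\le\eps.\]
Setting $t=0$ gives the candidate decomposition: define $F_\xi(x):=c_\xi(x)=\int_{\mathbf{T}^{m_s}}\phi_x(t)e(-\xi\cdot t)\,dt$, which is $\Gamma$-invariant since $\phi_{x\gamma}=\phi_x$ on $G/\Gamma$, and set $G(x):=\wt\phi_x(0)$. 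The bound $\snorm{G}_\infty\le\eps$ is immediate. For the vertical-frequency property, given $h\in G_s$ corresponding to a parameter $s\in\mathbf{T}^{m_s}$, I would use the abelianness of $G_s$ (from centrality) to write $\phi_{h\cdot x}(t)=\phi_x(t+s)$, and then translation-invariance of Haar measure yields $F_\xi(h\cdot x)=e(\xi(h))F_\xi(x)$.

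For the Lipschitz bound on $F_\xi$, I would argue that $\phi_x(t)$ depends Lipschitz-continuously on $x$ uniformly in $t$: the $\on{poly}_m(K)$-boundedness of $\exp(t_1X_{m-m_s+1}+\cdots+t_{m_s}X_m)$ combined with \cite[Lemma~A.5]{GT12} gives $|\phi_x(t)-\phi_y(t)|\le L\on{poly}_m(K)d(x,y)$, and this transfers to $F_\xi$ since the latter is an average of such functions against a bounded character. Thus $\snorm{F_\xi}_{\mr{Lip}}\le\on{poly}_m(LK)\le\on{poly}_m(LK\eps^{-1})$. The main obstacle is bookkeeping rather than conceptual: I must verify that the Mal'cev parametrization of the vertical torus is a $\on{poly}_m(K)$-Lipschitz bijection on the fundamental domain and that the distortion of the metric on $G/\Gamma$ under left-multiplication by bounded elements is also $\on{poly}_m(K)$. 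Both facts follow from the quantitative rationality results in \cite[Appendix~A]{GT12} with the explicit $\on{poly}_m$ dependencies indicated at the start of \cref{sec:nil-main}, so no new estimate is required---the content of the lemma is essentially an application of the Fej\'er-kernel truncation in \cref{lem:torus-expand} fiberwise along $G_s$-orbits.
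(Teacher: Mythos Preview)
Your approach is essentially the same as the paper's: Fourier expand $F$ fiberwise along the $G_s$-orbits via the Fej\'er-kernel truncation of \cref{lem:torus-expand}, then read off $F_\xi$ and the error at $t=0$. One small slip: the coefficients $c_\xi(x)$ that \cref{lem:torus-expand} produces are the Fej\'er-weighted values $R^{m_s}\mu_R(\xi)\wh{\phi_x}(\xi)$ rather than the raw Fourier coefficients $\int\phi_x(t)e(-\xi\cdot t)\,dt$, but this scalar factor is harmless for the vertical-frequency and Lipschitz claims, so the argument stands.
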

\begin{remark*}
The bounds in this specific lemma could likely be substantially improved with a more careful treatment.
\end{remark*}
\begin{proof}
The proof follows exactly as in \cite[Lemma~3.7]{GT12} (with the quantification as suggested by \cite{TT21}) so we will be brief with details. Let $R\ge1$ be an integer cutoff and let $K_R$ denote the same kernel as in the proof of \cref{lem:torus-expand}. Define 
\[F_1(y) :=\int_{\mathbf{R}^{m_s}/\mathbf{Z}^{m_s}}F(\Theta y)K_R(\Theta)~d\Theta\]
where we have identified the last group in the filtration with $\mathbf{R}^{m_s}/\mathbf{Z}^{m_s}$ for the appropriate integer $m_s\le m$ (and therefore $\Theta y$ makes sense for $y\in G/\Gamma$, explicitly defined as $\psi_{\mc{X}}^{-1}(\Theta^\ast)y$ where $\Theta^\ast$ is $0$ in the first $m-m_s$ coordinates and $\Theta$ in the final $m_s$). Fourier expansion in $\mathbf{R}^{m_s}/\mathbf{Z}^{m_s}$ gives that 
\[F_1(y) :=\sum_{k\in \mathbf{Z}^{m_s}}F^{\wedge}(y;k)(R^{m_s}\mu_{R}(k))\]
where 
\[F^{\wedge}(y;k) := \int_{\mathbf{R}^{m_s}/\mathbf{Z}^{m_s}}F(\Theta y)e(-k\cdot \Theta)~d\Theta.\]
The estimates from \cref{lem:torus-expand} now complete the proof, noting that metric on $G/\Gamma$ when descended to the torus is $\on{poly}_{m}(K)$-equivalent to the standard metric on $\mathbf{R}^{m_s}/\mathbf{Z}^{m_s}$.
\end{proof}

\section{Circle method estimates}\label{sec:circle}
The material within this appendix consists of standard circle method computations, aside from proving an $L^{\infty}$-comparison estimate between certain $W$-tricked quadratic Gauss sums and the Fourier transform of an interval. This comparison is essentially contained within the work of Browning and Prendiville \cite{BP17}.

\subsection{\texorpdfstring{$L^6$}{L6}-bound on the Fourier transform}
We first require a log-free variant of Weyl's inequality (see \cite[Lemma~A.11]{GT08}).
\begin{lemma}[Weyl's inequality]\label{lem:Weyl}
There exists an absolute constant $C = C_{\ref{lem:Weyl}}>0$ such that the following holds. Let $\alpha,\beta\in \mathbf{T}$, $\delta\in (0,1)$, and let $I$ be an interval in $\mathbf{Z}$. If 
\[\bigg|\sum_{y\in I} e(\alpha y^2+\beta y)\bigg|\ge \delta|I|,\]
then either $|I|\le C\delta^{-C}$ or there is a positive integer $q\le C\delta^{-C}$ such that
\[\snorm{q\alpha}\le C\delta^{-C}|I|^{-2}.\]
\end{lemma}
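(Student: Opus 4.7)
The plan is a two-stage argument: I will use Weyl differencing to produce a coarse rational approximation to $\alpha$ with accuracy $|I|^{-1}$, and then sharpen it to $|I|^{-2}$ by applying van der Corput's second derivative test to a suitable restriction of the sum.

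First, I would expand $|S|^2$, where $S = \sum_{y \in I} e(\alpha y^2 + \beta y)$, via the standard Weyl differencing identity: squaring and changing variables to $(y,h) = (y, y_1 - y_2)$ yields
\[
\delta^2 |I|^2 \le |S|^2 \le \sum_{|h| \le |I|} \min\bigl(|I|,\, 1/(2\snorm{2\alpha h})\bigr).
\]
A pigeonhole argument --- bounding the contribution from $h$ with $\snorm{2\alpha h} > 2\delta^{-2}/|I|$ by at most $(2|I|+1) \cdot \delta^{2}|I|/4$ --- shows that either $|I| \ll \delta^{-2}$, which is the first case of the lemma, or else at least $\gg \delta^2 |I|$ values of $h \in [-|I|,|I|]$ satisfy $\snorm{2\alpha h} \le 2\delta^{-2}/|I|$. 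Sorting these good values of $h$ and pigeonholing on their consecutive differences then produces $q_0 \le C\delta^{-C}$ with $\snorm{q_0\alpha} \le C\delta^{-C}/|I|$.

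To upgrade this $|I|^{-1}$ accuracy to $|I|^{-2}$, I would write $\alpha = a/q_0 + \eta$ with $|\eta| \le C\delta^{-C}/(q_0|I|)$ and split $I$ into arithmetic progressions modulo $q_0$. The substitution $y = r + q_0 n$ shows that $(a/q_0)y^2 \equiv (a/q_0)r^2 \pmod 1$ on each progression, so after reindexing, the restricted sum becomes a Weyl sum in $n$ of length $|J| := |I|/q_0$ with leading quadratic coefficient $\eta q_0^2$. By pigeonhole over the residue class $r$, at least one such shortened sum retains magnitude at least $\delta|J|$.

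Finally, I would apply van der Corput's second derivative test to this shortened sum: since the second derivative $2\eta q_0^2$ is constant, the bound $|S| \ll |J|\sqrt{|\eta q_0^2|} + |\eta q_0^2|^{-1/2}$ applies, and the hypothesis on $|S|$ yields a dichotomy. Either $|\eta q_0^2| \gg \delta^2$, which combined with the earlier bound $|\eta| \le C\delta^{-C}/(q_0|I|)$ forces $|I| \ll C\delta^{-C}$ (returning to the first case of the lemma); or $|\eta| \le C\delta^{-C}/|I|^2$, in which case $\snorm{q_0\alpha} = \snorm{q_0\eta} \le q_0|\eta| \le C\delta^{-C}/|I|^2$, giving the desired conclusion with $q = q_0$. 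The main obstacle will be organizing the constants so that the final exponent in $\delta^{-O(1)}$ remains absolute; this succeeds precisely because the argument does not iterate --- a single pass of Weyl differencing followed by a single application of van der Corput suffices, so the $\delta^{-O(1)}$ factors from each stage combine multiplicatively without any induction-driven blowup.
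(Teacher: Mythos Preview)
Your argument is correct. The paper does not give its own proof of this lemma: it is stated with a reference to \cite[Lemma~A.11]{GT08} and invoked as a black box. Your two-stage strategy---one round of Weyl differencing to extract a denominator $q_0\ll\delta^{-O(1)}$ with $\snorm{q_0\alpha}_{\mathbf{T}}\ll\delta^{-O(1)}|I|^{-1}$, followed by restriction to a residue class modulo $q_0$ and an application of the van der Corput second-derivative estimate to the resulting quadratic sum with small constant second derivative $2\eta q_0^2$---is a clean, self-contained route with the desired $\delta^{-O(1)}$ dependence throughout. One small point to make explicit when writing it up: dispose of the case $\eta=0$ (where $\snorm{q_0\alpha}_{\mathbf{T}}=0$ already) before invoking van der Corput, since the term $|\eta q_0^2|^{-1/2}$ is meaningless there.
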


We next require the following basic estimate regarding exponential sum estimates, which is based on \cite[Chapter~4]{Nat96}.
\begin{lemma}\label{lem:L4-bound}
Let $W$ and $P$ be as in \eqref{eq:Wdef} with $W\le N$ and, for $r\in [W]$, define 
\[P_r(y) = \frac{P(Wy + r) - P(r)}{W}.\]
We have that
\[\sup_{r\in [W]}\int_{0}^1\bigg|\sum_{x\in [\pm N]}e(\Theta P_r(x))\bigg|^4~d\Theta\ll N^2 \cdot \exp\bigg(O\bigg(\frac{\log N}{\log\log N}\bigg)\bigg).\]
\end{lemma}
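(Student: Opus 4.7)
The plan is to interpret the $L^4$-integral as a Diophantine counting problem and then control it via a divisor-function bound. By orthogonality,
\[
\int_0^1\bigg|\sum_{x\in [\pm N]}e(\Theta P_r(x))\bigg|^4\,d\Theta = \#\big\{(x_1,x_2,x_3,x_4)\in[\pm N]^4\colon P_r(x_1)+P_r(x_2)=P_r(x_3)+P_r(x_4)\big\},
\]
so it suffices to bound the right-hand side by $N^2\exp(O(\log N/\log\log N))$, uniformly in $r$. A direct computation gives $P_r(y) = W^2y^2 + ay$, where $a := 2Wr+1$ is coprime to $W$.

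The first key step is a reparameterization that factors the defining equation. Set $t = x_1 - x_3$, $u = x_1 + x_3$, $s = x_4 - x_2$, $v = x_4 + x_2$; the equivalent equation $P_r(x_1) - P_r(x_3) = P_r(x_4) - P_r(x_2)$ then factors as
\[
t\,(W^2 u + a) \;=\; s\,(W^2 v + a),
\]
and each $(t,u,s,v)\in[\pm 2N]^4$ satisfying the parity constraints $t\equiv u$ and $s\equiv v\pmod 2$ corresponds to a unique $4$-tuple $(x_1,\ldots,x_4)$. So we are reduced to counting such $(t,u,s,v)$.

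The second key step is to split into cases and apply a divisor bound. When $t = 0$, the factored equation forces $s(W^2v+a) = 0$; since $W^2v + a\equiv 1\pmod W$ never vanishes, we must have $s = 0$, yielding only $O(N^2)$ solutions. For $t\ne 0$, fix a pair $(t,u)$ and set $A := t(W^2u + a)$, a nonzero integer with $|A|\le 6N^2W^2\le 6N^4$. Every admissible $(s,v)$ then comes from a divisor decomposition $A = s\cdot(W^2v+a)$ with $|s|\le 2N$, and $v$ is determined by $s$ via $v = (A/s - a)/W^2$. Thus the number of such $(s,v)$ is at most $2\,d(|A|)$, where $d$ denotes the divisor function. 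The classical bound $d(n)\ll\exp(O(\log n/\log\log n))$, applied with $n\le N^{O(1)}$, yields $d(|A|)\ll\exp(O(\log N/\log\log N))$. Summing over the $O(N^2)$ choices of $(t,u)$ gives the desired estimate, and since the bound on $d(|A|)$ depends on $r$ only through the magnitude $|A|\le N^{O(1)}$, uniformity in $r\in[W]$ is automatic.

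The argument is essentially mechanical; the only conceptual subtlety is that the factorization decouples the equation into a divisor problem cleanly because $\gcd(a,W) = 1$, which is precisely what prevents $W^2v + a$ from vanishing and ensures each divisor $s$ of $A$ produces at most one valid $v$. No step presents a real obstacle: the classical divisor bound does all the heavy lifting, and Weyl-type minor-arc input (such as \cref{lem:Weyl}) is not needed for this particular $L^4$-estimate.
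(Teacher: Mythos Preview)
Your argument is correct and is essentially the paper's own proof, just organized slightly differently: the paper writes $|f|^2=\sum_d s(d)e(d\Theta)$ with $s(d)=\#\{(x,y):P_r(x)-P_r(y)=d\}$ and bounds $\int|f|^4=\sum_d s(d)^2$ via the same factorization $P_r(x)-P_r(y)=(x-y)(W^2(x+y)+a)$ and the same divisor bound, while you fix $(x_1,x_3)$ and count the remaining pair directly. Both arguments are identical in substance.
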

\begin{proof}
Fix $r\in [W]$; the proof will trivially give a bound uniform in $r$. Let $s(d) = \{(x,y)\in J\times J\colon P_r(x)-P_r(y) = d\}$. Since $|P_r(x) - P_r(y)|\le N^4$ (say) for $x,y\in[\pm N]$, we have $s(d) = 0$ for $|d|\ge N^4$. Furthermore, note that $P_r(x)-P_r(y) = (x-y)(W^2(x+y) + (2Wr+1))$. Therefore, $s(0)\le 2(2N+1)$ and, for $d\neq 0$, the divisor bound implies 
\[|s(d)|\le \exp\bigg(O\bigg(\frac{\log N}{\log\log N}\bigg)\bigg).\]
By definition,
\[\sum_{d\in\mathbf{Z}}s(d) = (2N+1)^2.\] Therefore,
\begin{align*}
\int_{0}^1\bigg|\sum_{x\in [\pm N]}e(\Theta P_r(x))\bigg|^4~d\Theta &= \int_{0}^1\bigg(\sum_{y\in \mathbf{Z}}s(y)e(y\Theta)\bigg)^2~d\Theta=\sum_{d\in \mathbf{Z}}s(d)s(-d)\\
&\le s(0)^2 + \Big(\max_{d\in \mathbf{Z}\setminus\{0\}}s(d)\Big)\sum_{d\in\mathbf{Z}}s(d)\ll N^2\cdot \exp\bigg(O\bigg(\frac{\log N}{\log\log N}\bigg)\bigg).\qedhere
\end{align*}
\end{proof}

We next record various basic properties of (generalized composite) Gauss sums. Several of these properties are recorded in \cite[Exercise~12,23]{BEW98}.
\begin{lemma}\label{lem:gauss}
Define
\[G(a,b,c) = \sum_{n=0}^{c-1}e\bigg(\frac{an^2+bn}{c}\bigg).\]
We have the following set of properties:
\begin{itemize}
    \item If $\gcd(c,d) = 1$ then \[G(a,b,cd) = G(ac,b,d)G(ad,b,c);\]
    \item If $\gcd(a,c) > 1$ then $G(a,b,c) = 0$ unless $\gcd(a,c)|b$. In this case, it follows that 
    \[G(a,b,c) = G\bigg(\frac{a}{\gcd(a,c)},\frac{b}{\gcd(a,c)},\frac{c}{\gcd(a,c)}\bigg);\]
    \item If $\gcd(a,c) = 1$ and $\gcd(c,2) = 1$ then 
    \[|G(a,b,c)|=\sqrt{c};\]
    \item If $c=2^k$ for $k\ge 1$, $\gcd(a,c)=1$, and $b$ is even then
    \[|G(a,b,c)|\le 2\sqrt{c}.\]
\end{itemize}
\end{lemma}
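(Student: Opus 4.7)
\emph{Plan.} All four bullet points are classical quadratic Gauss sum identities; the authors already flag that they are recorded in \cite[Ex.~12,~23]{BEW98}, so one acceptable proof is simply to cite. For self-containment I would sketch the short elementary arguments, using only the classical value of $|\sum_{n=0}^{c-1}e(an^2/c)|$ as a black box.

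For the multiplicativity property with $\gcd(c,d)=1$, I would use the Chinese Remainder parametrization $(n_1,n_2)\mapsto dn_1+cn_2$, which bijects $[0,c-1]\times[0,d-1]$ onto a complete set of residues modulo $cd$. A direct expansion gives
\[\frac{a(dn_1+cn_2)^2+b(dn_1+cn_2)}{cd}=\frac{adn_1^2+bn_1}{c}+2an_1n_2+\frac{acn_2^2+bn_2}{d},\]
where the cross term $2an_1n_2$ is an integer and is absorbed by $e(\cdot)$. Factoring the exponential then decouples the sum as $G(ad,b,c)\cdot G(ac,b,d)$, which is the asserted identity.

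For the reduction identity, write $g=\gcd(a,c)$, $a=ga'$, $c=gc'$ with $\gcd(a',c')=1$. A short calculation shows that the shift $n\mapsto n+c'$ multiplies the summand by $e(b/g)$ modulo an integer phase; hence if $g\nmid b$ the sum is invariant under multiplication by a nontrivial root of unity and therefore vanishes. When $g\mid b$ the summand is genuinely $c'$-periodic in $n$, so the $(an^2+bn)/c$ form over $n\in[0,c-1]$ collapses to $g$ copies (inessential for the downstream modulus bounds) of the reduced sum in parameters $(a/g,b/g,c/g)$.

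For the two modulus bounds, the workhorse is completing the square. When $c$ is odd and $\gcd(a,c)=1$, $2a$ is invertible mod $c$; setting $t\equiv(2a)^{-1}b\pmod{c}$ yields
\[G(a,b,c)=e(-at^2/c)\sum_{n=0}^{c-1}e(an^2/c),\]
and the residual sum has modulus exactly $\sqrt{c}$ by Gauss's classical evaluation. When $c=2^k$ with $a$ odd and $b$ even, the same maneuver goes through but now with $t\equiv a^{-1}(b/2)\pmod{2^k}$ --- this is precisely where the evenness of $b$ is essential, since $2a$ is no longer invertible mod $2^k$ --- reducing everything to $|\sum_{n=0}^{2^k-1}e(an^2/2^k)|\le 2\sqrt{2^k}$, which is another classical evaluation. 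The main (very mild) obstacle is the $2$-adic bookkeeping in the final case; beyond that, the entire lemma reduces to standard material from \cite{BEW98}.
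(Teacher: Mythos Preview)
The paper does not actually prove this lemma; it merely cites \cite[Ex.~12,~23]{BEW98} and records the properties. Your sketch supplies the standard elementary arguments and is correct.

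You also correctly flagged, in your parenthetical ``$g$ copies (inessential for the downstream modulus bounds)'', that the second bullet point as stated is off by a factor of $\gcd(a,c)$: the correct identity is $G(a,b,c)=\gcd(a,c)\cdot G(a/g,b/g,c/g)$, as a trivial example like $G(2,0,2)=2\neq 1=G(1,0,1)$ confirms. This appears to be a typo in the paper. As you note, it is harmless here, since every application in \cref{lem:L6-bound} and \cref{lem:Linfinity} has either $\gcd(a,c)=1$ or lands in the vanishing case.
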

Note that these relations can be used to determine a bound on the magnitude of any composite Gauss sum. We can use the first relation to decompose into prime power moduli $c$ and the second relation to reduce to $\gcd(a,c)=1$; the third deals with odd prime powers and the final one with even prime powers.

Now for the remainder of this appendix, we say $\Theta$ is in the major arcs $\mf{M}$ if there exists $0\le q_1<q_2\le N^{\eps}$ such that 
\[\bigg|\Theta - \frac{q_1}{q_2}\bigg|\le N^{-2+\eps}\] for a small constant $\eps > 0$ to be chosen later. Set $\mf{m} = \mathbf{T}\setminus \mf{M}$ to be the minor arcs.
\begin{lemma}\label{lem:minor-arc-bound}
There exists $\eps_{\ref{lem:minor-arc-bound}} > 0$ such that the following holds. Let $0<\eps < \eps_{\ref{lem:minor-arc-bound}}$ and $0<\delta \le \delta_{\ref{lem:minor-arc-bound}}(\eps)$. Furthermore, suppose that $W$ and $P_r(\cdot)$ are as in \cref{lem:L4-bound} and with $W\le N^{\delta}$. Then, we have
\[\sup_{r\in [W]}\int_{\mf{m}}\bigg|\sum_{x\in[\pm N]}e(\Theta P_r(x))\bigg|^{6}~d\Theta \ll_{\eps,\delta} N^{4-\delta}\]
and 
\[\sup_{\substack{\Theta\in \mf{m}\\ r\in [W]}}\bigg|\sum_{x\in [\pm T]}e(\Theta P_r(x))\bigg|\ll_{\eps,\delta}N^{1-\delta}.\]
\end{lemma}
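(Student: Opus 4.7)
The plan is to establish the sup-norm bound directly from Weyl's inequality and then combine it with \cref{lem:L4-bound} via H\"older to obtain the $L^6$ bound.

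For the sup-norm bound, expand
\[P_r(y) = W^2 y^2 + (2Wr+1)y,\]
so $\sum_{x\in[\pm T]} e(\Theta P_r(x))$ is a quadratic exponential sum with leading coefficient $\alpha = \Theta W^2$. Arguing by contradiction, I would suppose this sum exceeds $C N^{1-\delta}$ in magnitude for some $\Theta \in \mf{m}$ and suitably large constant $C$. When $T \le N^{1-\delta}/2$, the bound is trivial, so we may assume $T > N^{1-\delta}/2$. Then \cref{lem:Weyl} applied to the interval of length $\asymp T$ with parameter $\delta_W \asymp N^{1-\delta}/T$ yields a positive integer $q \le C_0 N^{C_0 \delta}$ with $\|q W^2 \Theta\|_{\mathbf{T}} \le C_0 N^{-2 + C_0 \delta}$ (for an absolute constant $C_0$); the degenerate alternative $|I| \le C_0 \delta_W^{-C_0}$ is inconsistent with $T \le N$ provided $\delta$ is small enough in terms of $C_0$. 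Setting $Q = qW^2$ and using $W \le N^\delta$ gives $Q \le N^{(C_0+2)\delta}$, so we find an integer $a \in [0, Q)$ with $|\Theta - a/Q| \le N^{-2+C_0\delta}/Q$. Taking $\delta_{\ref{lem:minor-arc-bound}}(\eps) := \eps/(C_0+2)$ then forces $Q \le N^\eps$ and $|\Theta - a/Q| \le N^{-2+\eps}$, witnessing $\Theta \in \mf{M}$, a contradiction.

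The $L^6$ bound follows by pulling out two factors of the sup-norm and applying \cref{lem:L4-bound}:
\[\int_{\mf{m}} \bigg|\sum_{x\in[\pm N]} e(\Theta P_r(x))\bigg|^6 d\Theta \le \bigg(\sup_{\Theta\in\mf{m}} \bigg|\sum_{x\in[\pm N]} e(\Theta P_r(x))\bigg|\bigg)^2 \int_0^1 \bigg|\sum_{x\in[\pm N]} e(\Theta P_r(x))\bigg|^4 d\Theta,\]
which is $\ll N^{2-2\delta} \cdot N^{2+o(1)} \ll N^{4-\delta}$ for $N$ sufficiently large.

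The main technical obstacle is the conversion from Weyl's approximation $\|q W^2 \Theta\|_{\mathbf{T}}$ small into the true major-arc approximation $|\Theta - a/Q| \le N^{-2+\eps}$ with $Q \le N^\eps$, since the $W^2$ factor inflates the denominator. This is what forces the choice $\delta \le \eps/(C_0+2)$. Notably, no delicate arithmetic input (such as the coprimality $\gcd(W^2, 2Wr+1) = 1$ that drove the analysis in \cref{prop:Malcev-bash}) enters here, since only the quadratic coefficient is constrained by Weyl's inequality.
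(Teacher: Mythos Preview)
Your proof is correct and follows essentially the same approach as the paper's: apply Weyl's inequality to obtain the sup-norm bound on the minor arcs (using that the leading coefficient of $P_r$ is $W^2$ and that $W\le N^\delta$ forces $qW^2\le N^{(C_0+2)\delta}$, contradicting $\Theta\in\mf{m}$ once $\delta$ is small in terms of $\eps$), and then combine with the $L^4$ bound of \cref{lem:L4-bound} to get the $L^6$ estimate. Your treatment is in fact slightly more explicit than the paper's in handling the range of $T$ and in spelling out the conversion from $\|qW^2\Theta\|_{\mathbf{T}}$ small to an explicit major-arc approximation with denominator $Q=qW^2$.
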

\begin{proof}
We take $0<\delta \ll \eps$ to be chosen later. Let $f(\Theta)$ be the expression inside the supremum. By \cref{lem:Weyl}, for $\Theta\in\mf{m}$ we have $|f(\Theta)|\ll_{\eps,\delta}N^{1-\delta}$. Indeed, if not then we must have $\snorm{qW^2\alpha}\ll N^{C\delta}N^{-2}$ for some $q\ll N^{C\delta}$ (and appropriate $C$), noting that the first coefficient of $P_r$ is $W^2$. If $\delta$ is small enough, this violates the definition of the minor arcs. This proves the second desired inequality.

For the first, applying \cref{lem:L4-bound} and using the above bound we find
\begin{align*}
\int_{\mf{m}}|f(\Theta)|^{6}~d\Theta &\le \int_{\mathbf{T}}|f(\Theta)|^{4}~d\Theta \cdot \sup_{\Theta\in \mf{m}}|f(\Theta)|^2\\
&\ll_{\eps,\delta} N^2\cdot \exp\bigg(O\bigg(\frac{\log N}{\log\log N}\bigg)\bigg) \cdot (N^{1-\delta})^2\\
&\ll N^{4-\delta/2}.\qedhere
\end{align*}
\end{proof}

We now handle the major arcs. Note that, without loss of generality, either $(q_1,q_2) = (0,1)$ or $\gcd(q_1,q_2) = 1$ and $1\le q_1<q_2\le N^{\eps}$. Furthermore, given that $\eps<1/4$ (say), the arcs
\[\mf{M}_{q_1,q_2} := \bigg\{\Theta\colon\bigg|\Theta - \frac{q_1}{q_2}\bigg|\le N^{-2+\eps}\bigg\}\]
for such $(q_1,q_2)$ are disjoint. We now state the major arc asymptotic for exponential sums of $P_r(x)$; as this material is completely standard, we omit the proof. 

\begin{lemma}\label{lem:major-arc-asym}
There exists $\eps = \eps_{\ref{lem:major-arc-asym}}$ such that the following holds. If $0<\eps<\eps_{\ref{lem:major-arc-asym}}$, $\Theta\in \mf{M}_{q_1,q_2}$, and $W$, $P_r(\cdot)$ are as in \cref{lem:L4-bound} with $W\le N^{\eps}$, and $\Theta^{\ast} = \Theta-\frac{q_1}{q_2}$, then 
\[\sum_{x\in[\pm N]}e(\Theta P_r(x)) = q_2^{-1}G(W^2q_1,(2Wr+1)q_1,q_2) \int_{-N}^N e(\Theta^{\ast}\cdot W^2x^2)~dx +O\left(N^{1/2}\right).\]
\end{lemma}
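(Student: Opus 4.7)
The plan is to execute a standard Hardy--Littlewood major arc computation, writing $P_r(y) = W^2 y^2 + (2Wr+1)y$ and $\Theta = q_1/q_2 + \Theta^{\ast}$, and splitting the sum over $x \in [\pm N]$ by residue classes modulo $q_2$. Writing $x = q_2 a + b$ with $b \in \{0,1,\ldots,q_2-1\}$, a direct expansion shows
\[\frac{q_1}{q_2} P_r(q_2 a + b) \equiv \frac{W^2 q_1 b^2 + (2Wr+1)q_1 b}{q_2} \pmod{1},\]
since every term involving $a$ in the expansion carries a compensating factor of $q_2$ in the numerator. Hence $e\bigl(\tfrac{q_1}{q_2}P_r(x)\bigr)$ depends only on $x \bmod q_2$ and the sum factors as
\[\sum_{b=0}^{q_2-1} e\!\left(\frac{W^2 q_1 b^2 + (2Wr+1)q_1 b}{q_2}\right)\,\sum_{\substack{a \in \mathbf{Z}\\ q_2 a + b \in [\pm N]}} e(\Theta^{\ast} P_r(q_2 a + b)),\]
whose outer $b$-sum is by definition $G(W^2 q_1, (2Wr+1)q_1, q_2)$.

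For each fixed $b$, I would then replace the inner $a$-sum by an integral via partial summation. Setting $f(t) = e(\Theta^{\ast} P_r(q_2 t + b))$, one has $|f'(t)| \ll q_2|\Theta^{\ast}|(W^2 N + W)$, which under $|\Theta^{\ast}| \le N^{-2+\eps}$ and $q_2, W \le N^{\eps}$ is at most $N^{-1+4\eps}$. Since $a$ ranges over an interval of length $\ll N/q_2$, this yields $\sum_a f(a) = \int f(t)\,dt + O(N^{3\eps})$. Substituting $u = q_2 t + b$ (Jacobian $1/q_2$) converts the integral to $q_2^{-1}\int e(\Theta^{\ast} P_r(u))\,du$, and extending the range of integration to $[-N,N]$ costs only $O(q_2)$ in $u$, i.e.\ $O(1)$ after dividing by $q_2$. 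Consequently the inner sum equals $q_2^{-1}\int_{-N}^{N} e(\Theta^{\ast} P_r(u))\,du + O(N^{3\eps})$.

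Finally, the linear term of $P_r$ can be discarded in the integrand: for $|u|\le N$ one has $|\Theta^{\ast}(2Wr+1)u| \ll N^{-2+\eps}\cdot W\cdot N \ll N^{-1+2\eps}$, so $e(\Theta^{\ast} P_r(u)) = e(\Theta^{\ast} W^2 u^2)\bigl(1 + O(N^{-1+2\eps})\bigr)$, producing an error $O(N^{2\eps})$ after integrating over $[-N,N]$. Multiplying by the $b$-th Gauss-sum weight (of modulus $\le 1$) and summing over the $q_2$ residue classes gives the claimed asymptotic with cumulative error $O(q_2 \cdot N^{3\eps}) = O(N^{4\eps})$, which is $O(N^{1/2})$ provided $\eps_{\ref{lem:major-arc-asym}} < 1/8$.

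The argument is entirely routine; the only point that genuinely requires attention is that the linear coefficient $(2Wr+1)$ of $P_r$ carries a factor of $W$ and $r$ may be as large as $W$, so one must track the $W$-factors carefully when bounding $|f'|$ and when comparing $e(\Theta^{\ast} P_r(u))$ with $e(\Theta^{\ast} W^2 u^2)$. This is precisely why the restriction $W \le N^{\eps}$ (with a correspondingly small absolute choice of $\eps_{\ref{lem:major-arc-asym}}$) is imposed in the statement.
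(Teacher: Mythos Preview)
Your argument is correct and is exactly the standard major arc computation the paper declines to write out. One small slip: since $r\le W$, the linear coefficient obeys $2Wr+1\ll W^2$ rather than $W$, so your bound $|\Theta^\ast(2Wr+1)u|\ll N^{-1+2\eps}$ should be $N^{-1+3\eps}$; this only shifts a couple of intermediate exponents and the final error is still $O(N^{O(\eps)})=O(N^{1/2})$ for $\eps$ small enough.
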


We also need the following elementary fact proven via integration by parts.  
\begin{lemma}\label{lem:interval}
We have
\begin{equation*}
\bigg|\int_{-\gamma}^\gamma e(x^2)~dx\bigg|\ll\min(|\gamma|, 1).
\end{equation*}
\end{lemma}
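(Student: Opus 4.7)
The plan is to combine two easy bounds. The trivial estimate $|e(x^2)|=1$ gives $\left|\int_{-\gamma}^\gamma e(x^2)\,dx\right|\le 2|\gamma|$, which already handles the regime $|\gamma|\le 1$. Thus it remains to establish a uniform $O(1)$ bound for $|\gamma|>1$, which I would do by a standard van der Corput / stationary phase integration by parts.

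By symmetry of the integrand, it suffices to bound $\int_0^\gamma e(x^2)\,dx$ for $\gamma>1$. Split this as $\int_0^1 e(x^2)\,dx+\int_1^\gamma e(x^2)\,dx$; the first piece is trivially bounded by $1$. For the tail, write $e(x^2)=\frac{1}{4\pi i x}\frac{d}{dx}e(x^2)$ and integrate by parts:
\[
\int_1^\gamma e(x^2)\,dx = \left[\frac{e(x^2)}{4\pi i x}\right]_1^\gamma + \int_1^\gamma \frac{e(x^2)}{4\pi i x^2}\,dx.
\]
The boundary terms are each bounded by $\frac{1}{4\pi}$, and the remaining integral is absolutely convergent with modulus at most $\frac{1}{4\pi}\int_1^\infty x^{-2}\,dx=\frac{1}{4\pi}$. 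Combining with the $\int_0^1$ piece and using symmetry yields the bound $\left|\int_{-\gamma}^\gamma e(x^2)\,dx\right|\ll 1$ uniformly in $\gamma$.

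There is no real obstacle here; this is a routine oscillatory integral estimate. The only small care is ensuring the derivative factor $\frac{1}{x}$ is well-defined, which is why I split off $[0,1]$ before integrating by parts.
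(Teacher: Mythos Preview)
Your proof is correct and follows essentially the same approach as the paper: split off a bounded piece near the origin and control the tail by integration by parts exploiting the oscillation of $e(x^2)$. The paper's only cosmetic difference is that it first substitutes $y=x^2$ before integrating by parts, whereas you integrate by parts directly in $x$; these are equivalent maneuvers.
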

\begin{proof}
By negation symmetry and the triangle inequality it suffices to assume that $\gamma \ge 2$. Now
\begin{align*}
\bigg|\int_{-\gamma}^{\gamma}e(x^2)~dx\bigg| &\le  \bigg|\int_{-1}^{1}e(x^2)~dx\bigg| + 2\bigg|\int_{1}^{\gamma}e(x^2)~dx\bigg|\\
&\le 2 + 4 \sup_{t\ge 1}\bigg|\int_{x\ge t}e(x^2)~dx\bigg|= 2 + 4 \sup_{t\ge 1}\bigg|\int_{t^2}^{\infty}\frac{e(x)}{2\sqrt{x}}~dx\bigg|\\
&\le 2 + 4 \sup_{t\ge 1}\bigg|\frac{e(t^2)}{4\pi it}\bigg| + 4\bigg|\int_{t^2}^{\infty}\frac{e(x)}{8\pi i x^{3/2}}~dx\bigg| \ll 1.\qedhere
\end{align*}
\end{proof}

We now in position to derive the necessary $L^{6}$-bound; this is essentially an exercise in bounding certain integrals and quadratic Gauss sums. 
\begin{lemma}\label{lem:L6-bound}
There exists $\eps_{\ref{lem:L6-bound}} > 0$ such that the following holds. Let $0<\eps < \eps_{\ref{lem:L6-bound}}$ and $0<\delta \le \delta_{\ref{lem:L6-bound}}(\eps)$. Furthermore, suppose that $W$ and $P_r(\cdot)$ are as in \cref{lem:L4-bound} and with $W\le N^{\delta}$. We have 
\[\sup_{r\in [W]}\int_{\mathbf{T}}\bigg|\sum_{x\in [\pm N]}e(\Theta P_r(x))\bigg|^{6}~d\Theta \ll N^{4}W^{-2},\]
where the implied constant is absolute. 
\end{lemma}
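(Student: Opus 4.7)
The plan is to partition $\mathbf{T}$ into the major arcs $\mf{M}=\bigsqcup_{q_1,q_2} \mf{M}_{q_1,q_2}$ and the minor arcs $\mf{m}$, and to estimate each contribution separately. The minor arc contribution will be dispatched immediately by \cref{lem:minor-arc-bound}: if I choose $\delta_{\ref{lem:L6-bound}}(\eps)$ as a sufficiently small fixed multiple of $\delta_{\ref{lem:minor-arc-bound}}(\eps)$, the hypothesis $W \le N^{\delta_{\ref{lem:L6-bound}}(\eps)}$ makes $\int_{\mf{m}}|f|^6 \ll N^{4-\delta'}$ comfortably $\ll N^4 W^{-2}$.

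On each major arc $\mf{M}_{q_1,q_2}$, writing $\Theta^* := \Theta - q_1/q_2$, I would apply \cref{lem:major-arc-asym} to get
\[f(\Theta) = q_2^{-1}\, G\bigl(W^2 q_1,\, (2Wr+1)q_1,\, q_2\bigr)\cdot I(\Theta^*) + O(N^{1/2}),\qquad I(\Theta^*) := \int_{-N}^N e(\Theta^* W^2 x^2)\, dx.\]
The substitution $u = Wx\sqrt{|\Theta^*|}$ combined with \cref{lem:interval} yields $|I(\Theta^*)| \ll \min(N, (W\sqrt{|\Theta^*|})^{-1})$, and splitting the $\Theta^*$ integral at the balance point $|\Theta^*| = (NW)^{-2}$ gives $\int_{|\Theta^*|\le N^{-2+\eps}} |I(\Theta^*)|^6\, d\Theta^* \ll N^4 W^{-2}$. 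Using $(|a|+|b|)^6 \ll |a|^6 + |b|^6$, the error term $O(N^{1/2})$ contributes at most $O(N^3 \cdot N^{-2+\eps}) = O(N^{1+\eps})$ per major arc, which will be negligible.

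The crux, and the main obstacle, is establishing the right bound on the magnitude of the Gauss sum $G(W^2 q_1, (2Wr+1)q_1, q_2)$; here the $W$-trick earns its keep. Setting $d = \gcd(W^2 q_1, q_2) = \gcd(W^2, q_2)$ (using $\gcd(q_1, q_2) = 1$), I claim that $d > 1$ forces the Gauss sum to vanish: any prime $p \mid d$ divides $W$ (so $p \nmid 2Wr+1$) and divides $q_2$ (so $p \nmid q_1$), contradicting the necessary condition $d \mid (2Wr+1)q_1$ from the second part of \cref{lem:gauss}. Thus the Gauss sum is nonzero only when $\gcd(q_2, W) = 1$, in which case $q_2$ is odd (since $2\mid W$), and the third part of \cref{lem:gauss} gives $|G| = \sqrt{q_2}$.

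Combining these ingredients, I would obtain $\int_{\mf{M}_{q_1,q_2}} |f|^6\, d\Theta \ll q_2^{-3} N^4 W^{-2} + N^{1+\eps}$, and summing over $q_2 \le N^\eps$ with $\gcd(q_2, W)=1$ and $q_1 \in \{0,\ldots,q_2-1\}$ with $\gcd(q_1, q_2) = 1$ yields
\[\int_{\mf{M}} |f|^6\, d\Theta \ll N^4 W^{-2} \sum_{q_2\ge 1} \frac{\phi(q_2)}{q_2^3} + O(N^{1+3\eps}) \ll N^4 W^{-2},\]
as desired. Note that the implicit constant is independent of $r$, matching the supremum in the statement. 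Without the coprimality $\gcd(W, 2Wr+1) = 1$ coming from the $W$-trick, the Gauss sum could be as large as $\sqrt{q_2 W^{O(1)}}$ and the $W^{-2}$ saving would be destroyed.
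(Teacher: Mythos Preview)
Your proposal is correct and follows essentially the same approach as the paper: split into major and minor arcs, dispatch the minor arcs via \cref{lem:minor-arc-bound}, apply the major-arc asymptotic \cref{lem:major-arc-asym}, bound the oscillatory integral via \cref{lem:interval}, and control the Gauss sum using \cref{lem:gauss}. Your treatment of the Gauss sum is in fact slightly sharper than the paper's---you observe that $G(W^2q_1,(2Wr+1)q_1,q_2)$ actually vanishes when $\gcd(q_2,W)>1$ (rather than merely being $\le 4\sqrt{q_2}$), which is the same mechanism the paper is invoking but stated more explicitly.
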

\begin{proof}
Choosing $\eps_{\ref{lem:L6-bound}}$ sufficiently small and using \cref{lem:minor-arc-bound,lem:major-arc-asym}, it suffices to prove that 
\[\bigg(G(0,0,1)^6 + \sum_{\substack{1\le q_1<q_2\le n^{\eps}\\\gcd(q_1,q_2) = 1}}(q_2^{-1}G(W^2q_1,(2Wr+1)q_1,q_2))^6\bigg)\cdot \int_{-N^{-2+\eps}}^{N^{-2+\eps}}\bigg|\int_{-N}^Ne(\Theta^{\ast}\cdot W^2x^2)~dx\bigg|^6d\Theta^{\ast}\]
is bounded as in the statement of the lemma. Note that, as $\gcd(W^2,2Wr+1) = 1$ and $\gcd(q_1,q_2) = 1$, we have $|G(W^2q_1,(2Wr+1)q_1,q_2)|\le 4\sqrt{q_2}$ using \cref{lem:gauss}, and thus the first term is seen to be bounded by a constant. The required bound on the other term, the integral, follows from \cref{lem:interval}: we obtain after change of variables the bound
\[\bigg|\int_{-N}^Ne(\Theta^\ast W^2x^2)~dx\bigg|\ll\min\{N,(\Theta^\ast)^{-1/2}W^{-1}\}\]
whose $6$-th power integrates to $O(N^4W^{-2})$, as desired.
\end{proof}

\subsection{\texorpdfstring{$L^{\infty}$}{Linfinity}-comparison estimate}
\begin{lemma}\label{lem:Linfinity}
There exists $\eps = \eps_{\ref{lem:Linfinity}}>0$ such that the following holds. Let $N\ge 1$ and $W, w$ be as in \eqref{eq:Wdef} with $|W|\le N^{\eps}$. Furthermore, define
\[\nu^{\ast}(d) = (NW)^{1/2}\cdot \mbm{1}[d\in \{P(k)\colon k\in \mathbf{Z}\}\cap [N]]\] and
\[\nu(d) = \sqrt{\frac{N}{d}}\mbm{1}_{1\le d\le N}.\]
Then, we have 
\[\sup_{\substack{\Theta\in\mathbf{T}\\k\in [W]}}\bigg|\sum_{d\in \mathbf{Z}}e(d\Theta)(\nu^{\ast}(Wd + k)-\nu(Wd + k))\bigg|\ll \frac{N}{W\sqrt{w}}.\]
\end{lemma}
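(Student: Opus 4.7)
The plan is to carry out a circle method comparison between $\hat\nu^\ast$ and $\hat\nu$ at the scale of $W$-dilated arcs, leveraging the fact that $W$ contains every prime $\le w$ to extract the factor of $\sqrt{w}$ from Gauss sum cancellation. First, I would rewrite both sums in comparable form via a change of variables. Since $P(j) \equiv j \pmod{W}$, the condition $\nu^\ast(Wd+r) \ne 0$ forces $Wd + r = P(Wm + r)$ for some $m \in \mathbf{Z}$; then the identity $P(Wm+r) = WP_r(m) + P(r)$ gives $d = P_r(m) + r^2$, so that
\[ \sum_d e(d\Theta)\nu^\ast(Wd+r) = (NW)^{1/2} e(r^2\Theta)\sum_{m \in \mc{I}_r} e(\Theta P_r(m)), \]
where $\mc{I}_r = \{m \in \mathbf{Z}: 1 \le P(Wm+r) \le N\}$ has length $\asymp M/W$. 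The second sum $\sum_d e(d\Theta)\nu(Wd+r)$ equals $\sum_{d : 1 \le Wd + r \le N} e(d\Theta)\sqrt{N/(Wd+r)}$, which I will compare term-by-term on major arcs and control by partial summation off of them. I would decompose $\mathbf{T}$ into major arcs $\mf{M}_{q_1,q_2} = \{\Theta : \snorm{\Theta - q_1/q_2}_\mathbf{T} \le N^{-2+\eps}\}$ and minor arcs $\mf{m}$, as in the discussion preceding \cref{lem:minor-arc-bound}.

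The crucial step is the treatment of major arcs $\mf{M}_{q_1,q_2}$ with $q_1 \ne 0$. Applying \cref{lem:major-arc-asym} (adapted to the genuine summation range $|m| \le M/W$ rather than $[-N,N]$) yields
\[ \sum_{m \in \mc{I}_r} e(\Theta P_r(m)) = q_2^{-1}G(W^2q_1, (2Wr+1)q_1, q_2)\int_{-M/W}^{M/W} e(\Theta^\ast W^2 x^2)\,dx + \text{error}, \]
where $\Theta^\ast = \Theta - q_1/q_2$. The plan is to show this Gauss sum either vanishes or is small. Since $\gcd(q_1,q_2) = 1$ and $\gcd(W, 2Wr+1) = 1$, any prime $p \mid q_2$ with $p \mid W$ would force $p \mid (2Wr+1)q_1$ via the vanishing criterion from \cref{lem:gauss}, which is impossible; hence $\gcd(q_2, W) = 1$ whenever $G \ne 0$, and since every prime $\le w$ divides $W$, this forces $q_2 > w$. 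Then $|q_2^{-1}G| \ll q_2^{-1/2} \le w^{-1/2}$, and combining this with $\bigl|\int e(\Theta^\ast W^2 x^2)\,dx\bigr| \ll \min(M/W, W^{-1}|\Theta^\ast|^{-1/2})$ from \cref{lem:interval} gives $|\hat\nu^\ast(\Theta)| \ll N/(W\sqrt{w})$. On these same arcs $\snorm{\Theta}_\mathbf{T} \ge 1/(2q_2)$, so Abel summation against the monotone weight $\sqrt{N/(Wd+r)}$ gives $|\hat\nu(\Theta)| \ll q_2\sqrt{N} \ll N^{1/2+\eps}$, a negligible contribution.

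On the main major arc $\mf{M}_{0,1}$ (i.e., $|\Theta| \le N^{-2+\eps}$), \cref{lem:major-arc-asym} with $G(0,0,1)=1$ and the substitution $y = Wx$ give $\hat\nu^\ast(\Theta) \approx \sqrt{N/W}\,e(r^2\Theta)\int_{-M}^M e(\Theta y^2)\,dy$. For $\hat\nu(\Theta)$, Euler--Maclaurin followed by the substitutions $u = Wd + r$ and $u = y^2$ yields $\hat\nu(\Theta) \approx e(-r\Theta/W)\sqrt{N/W}\int_{-M}^M e(\Theta y^2)\,dy$. The phase mismatch satisfies $|e(r^2\Theta) - e(-r\Theta/W)| \ll (r^2 + r/W)|\Theta| \ll W^2 N^{-2+\eps}$, which together with the main-term size $\ll N/W$ gives a difference of $\ll N^{4\eps-1}/W$, far below the target. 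On minor arcs $\mf{m}$, \cref{lem:minor-arc-bound} supplies the cancellation in $\sum_m e(\Theta P_r(m))$ after incorporating the $(NW)^{1/2}$ prefactor, while partial summation handles $\hat\nu$ using that $\snorm{\Theta}_\mathbf{T}$ cannot be too small off the major arcs by Dirichlet's theorem. The main technical obstacle will be running \cref{lem:major-arc-asym} at the genuine summation scale $L = N^{1/2}W^{-3/2}$ rather than the ambient scale $N$ at which the arcs are defined, and verifying that the resulting error terms remain of size $o(N/(W\sqrt{w}))$ uniformly in $\Theta$ and $r$.
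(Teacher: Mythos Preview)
Your overall strategy---circle-method comparison, with the Gauss-sum argument forcing $q_2=1$ or $q_2>w$, and a direct asymptotic match on the principal arc---is exactly the paper's approach, and your main-arc computation is correct. The gap is in the arc scales.

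You borrow the decomposition $\mf{M}\cup\mf{m}$ from \cref{lem:minor-arc-bound}, where arcs have width $N^{-2+\eps}$. That decomposition is tuned to exponential sums of length $N$; the sum $\sum_{|m|\le M/W}e(\Theta P_r(m))$ has length $L\asymp N^{1/2}W^{-3/2}$, so Weyl's inequality only yields savings when $\Theta$ is at distance $\gg L^{-2}\asymp W^3/N$ from rationals with small denominator. Consequently, for $\Theta$ in your minor arcs but with (say) $N^{-2+\eps}<|\Theta|\lesssim W/N$, \cref{lem:minor-arc-bound} gives nothing for $\hat\nu^\ast$, and Abel summation gives only $|\hat\nu(\Theta)|\ll N^{1/2}\snorm{\Theta}_{\mathbf{T}}^{-1/2}$, which can be as large as $N^{3/2-\eps/2}$. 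So neither term is individually small there, and you have not yet compared them. You flag this as ``the main technical obstacle'' but do not resolve it.

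The paper's fix is to abandon the fixed $N^{-2+\eps}$ arcs and instead derive the relevant major-arc condition directly: Abel summation shows the $\nu$-sum is $\ll N/(W\sqrt{w})$ unless $\snorm{W\Theta}_{\mathbf{T}}\le w/N$, and Weyl's inequality (at length $L$, also using the linear coefficient $2Wr+1$ and its coprimality to $W$ to strip the extra $W$-powers) shows the $\nu^\ast$-sum is small unless $\snorm{q'W\Theta}_{\mathbf{T}}\le w^{O(1)}/N$ for some $q'\le w^{O(1)}$. Only on this much wider set of $\Theta$ does one write $W\Theta=q_1/q_2+\Theta^\ast$ with $q_2\le w^{O(1)}$, $|\Theta^\ast|\le w^{O(1)}/N$, and then run the major-arc asymptotic and Gauss-sum trichotomy ($\gcd(q_2,W)>1$ kills the sum; $q_2>1$ coprime to $W$ forces a prime factor $>w$; $q_2=1$ admits direct comparison). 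If you replace your arcs by these, your argument goes through and becomes essentially identical to the paper's.
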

\begin{proof}
Unwinding the definitions and noting that $W$ is sufficiently small, it suffices to prove that
\begin{align*}
&\sup_{\substack{\Theta\in\mathbf{T}\\k\in [W]}}\bigg|\sum_{|d|\le N^{1/2}W^{-3/2}}(NW)^{1/2}e(P(Wd+k^{\ast})\Theta)-\sum_{1\le d\le NW^{-1}}\sqrt{\frac{N}{Wd}}e((Wd+k)\Theta)\bigg|\ll \frac{N}{W\sqrt{w}}
\end{align*}
with $k^{\ast}\in [W]$ the unique choice (due to \cref{prop:Hensel}) such that $P(k^{\ast}) \equiv k \mod W$. Replacing $N$ by $NW$, it suffices to prove that 
\[\sup_{\substack{\Theta\in \mathbf{Z}\\k\in [W]}}\bigg|\sum_{|d|\le N^{1/2}W^{-1}}N^{1/2}We(P(Wd+k^{\ast})\Theta)-\sum_{1\le d\le N}\sqrt{\frac{N}{d}}e((Wd+k)\Theta)\bigg|\ll \frac{N}{\sqrt{w}}.\]

Consider potential $\Theta,k$ which do not satisfy this. We first consider the second summation. By summing a geometric series, we find that 
\begin{align*}
\bigg|\sum_{1\le d\le N}\sqrt{\frac{N}{d}}\exp((Wd+k)\Theta)\bigg| &= \bigg|\sum_{1\le d\le N}\sqrt{\frac{N}{d}}\exp((Wd)\Theta)\bigg|\\
&\ll\sum_{1\le t\le N-1}\bigg(\sqrt{\frac{N}{t}}-\sqrt{\frac{N}{t+1}}\bigg)\bigg|\sum_{1\le d\le t}e(dW\Theta)\bigg|  + \bigg|\sum_{1\le d\le N}\exp(dW\Theta)\bigg|\\
&\ll\frac{1}{\snorm{W\Theta}_{\mathbf{T}}}+ \sum_{1\le t\le N}N^{1/2}t^{-3/2}\min\bigg(t,\frac{1}{\snorm{W\Theta}_{\mathbf{T}}}\bigg)\\
&\ll\frac{1}{\snorm{W\Theta}_{\mathbf{T}}}+ \frac{N^{1/2}}{\snorm{W\Theta}_{\mathbf{T}}^{1/2}}.
\end{align*}
This is smaller than $\frac{N}{\sqrt{w}}$ unless $\snorm{W\Theta}_{\mathbf{T}}\le w/N$. Now we consider the first summation. Using a version of Weyl's inequality that accounts for the linear coefficient (see, e.g., \cite[Proposition~4.3]{GT12}), we have that if
\[\sum_{|d|\le N^{1/2}W^{-1}}N^{1/2}We(P(Wd+k^{\ast})\Theta)\gg \frac{N}{\sqrt{w}},\]
then there exists $q\le w^{O(1)}$ such that 
\begin{equation*}
\snorm{q\cdot W^3\Theta}_{\mathbf{T}}\le \frac{w^{O(1)}W^{2}}{N},\qquad
\snorm{q\cdot (2W^2k^{\ast} + W)\Theta}_{\mathbf{T}}\le \frac{w^{O(1)}W}{N^{1/2}}.
\end{equation*}
As $W$ is sufficiently small and $\gcd(2Wk^{\ast} + 1, W) = 1$, we can see that it suffices to handle $\Theta$ such that there exists $q'\le w^{O(1)}$ for which
\[\snorm{q'\cdot W\Theta}_{\mathbf{T}}\le \frac{w^{O(1)}}{N}.\]
Indeed, one can use argumentation similar to that appearing in the proof of \cref{prop:Malcev-bash}. (Note that both the first and second sums being large implies this condition.)

For such $\Theta$, we may write $W\Theta = \frac{q_1}{q_2} + \Theta^{\ast}$ with $|\Theta^{\ast}|\le\frac{w^{O(1)}}{N}$ where $q_2\le w^{O(1)}$ and $\gcd(q_1,q_2)=1$. We now have
\begin{align*}
&\sum_{|d|\le N^{1/2}W^{-1}}N^{1/2}We(P(Wd+k^{\ast})\Theta)\\
&=\exp(k\Theta)\cdot \sum_{|d|\le N^{1/2}W^{-1}}N^{1/2}We\bigg((W\Theta)\bigg(\frac{P(Wd + k^{\ast})-P(k^{\ast})}{W}\bigg) + \frac{P(k^{\ast}) - k}{W}\cdot W\Theta\bigg)\\
&=\exp\bigg(k\Theta + \frac{q_1\cdot (P(k^{\ast})-P(k))}{q_2W}\bigg) \sum_{|d|\le N^{1/2}W^{-1}}N^{1/2}We((W\Theta)(W^2d^2 + (2Wk^{\ast}+1)d)) +O\left(N^{1/2}\right).
\end{align*}

Now using the major arc bounds in \cref{lem:major-arc-asym}, and noting that $q_2^{-1}G(W^2q_1,(2Wk^{\ast}+1)q_1,q_2)=0$ if $\gcd(q_2,W)>1$, the above sum is bounded by $N^{1-\Omega(1)}$ in this case. Furthermore, if $q_2\neq 1$ and $\gcd(q_2,W)=1$, then $q_2$ has a prime factor larger than $w$, and therefore $q_2^{-1}G(W^2q_1, (2Wk^{\ast}+1)q_1,q_2)\ll w^{-1/2}$ and the sum becomes bounded by $O((N^{1/2}W)(N^{1/2}W^{-1})w^{-1/2})=O(Nw^{-1/2})$.

Thus, it suffices to focus on the case where $q_2=1$ and thus $q_1=0$. Note that
\[\sup_{|\Theta^{\ast}|\le \frac{w^{O(1)}}{N}}\bigg|\sum_{|d|\le N^{1/2}W^{-1}}N^{1/2}W\bigg(e(\Theta^\ast(W^2d^2 + (2Wk^{\ast}+1)d)) - e(\Theta^\ast(W^2d^2))\bigg)\bigg|\le N^{3/4}.\]

Now that we have significantly reduced our initial situation of general $\Theta,k$. To prove the lemma, it now simply suffices to prove that
\begin{align*}
\sup_{|\Theta^{\ast}|\le\frac{w^{O(1)}}{N}}\bigg|\sum_{|d|\le N^{1/2}W^{-1}}N^{1/2}We(\Theta^\ast(W^2d^2)) - \sum_{1\le d\le N}\sqrt{\frac{N}{d}}e(d\Theta^\ast)\bigg|\ll \frac{N}{\sqrt{w}}.
\end{align*}
For $|\Theta^{\ast}|\le w^{O(1)}/N$, we have
\begin{align*}
&\bigg|\sum_{|d|\le N^{1/2}W^{-1}}N^{1/2}We(\Theta^\ast(W^2d^2)) - \sum_{1\le d\le N}\sqrt{\frac{N}{d}}e(d\Theta^\ast)\bigg|\\
&\le\bigg|\sum_{1\le d \le N^{1/2}W^{-1}}2N^{1/2}We(\Theta^\ast(W^2d^2)) - \sum_{1\le d\le N}\sqrt{\frac{N}{d}}e(d\Theta^\ast)\bigg| + N^{3/4}\\
&\le\sum_{1\le d \le N^{1/2}W^{-1}}\bigg|2N^{1/2}We(\Theta^\ast(W^2d^2)) - \sum_{W^2d^2\le t\le W^2d^2 + 2W^2d}\sqrt{\frac{N}{t}}e(t\Theta^\ast)\bigg| + O\left(N^{4/5}\right)\\
&\le\sum_{1\le d \le N^{1/2}W^{-1}}\bigg|2N^{1/2}We(\Theta^\ast(W^2d^2)) - \sum_{W^2d^2\le t\le W^2d^2 + 2W^2d}\sqrt{\frac{N}{W^2d^2}}e(W^2d^2\Theta^\ast)\bigg| +O\left( N^{4/5}\right)\\
&\le\sum_{1\le d \le N^{1/2}W^{-1}}\bigg|2N^{1/2}W-(2W^2d+1)\sqrt{\frac{N}{W^2d^2}}\bigg| +O\left( N^{4/5}\right)\ll N^{4/5},
\end{align*}
as desired.
\end{proof}

\section{Miscellaneous estimates}\label{sec:random}
In this appendix, we prove a variety of miscellaneous estimates largely concerning changing the parameters of the $U^k$-norms. We first require the following elementary inequality. 
\begin{fact}\label{ft:trivial}
For all positive integers $k\ge 1$, we have that
\[|\sin(kx)|\le k|\sin x|.\]
\end{fact}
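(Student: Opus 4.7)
The plan is to proceed by induction on $k$. The base case $k=1$ is an equality, so trivial. For the inductive step, I would use the angle addition identity
\[
\sin((k+1)x) = \sin(kx)\cos(x) + \cos(kx)\sin(x),
\]
apply the triangle inequality, and bound each cosine factor by $1$ in absolute value to obtain
\[
|\sin((k+1)x)| \le |\sin(kx)| + |\sin(x)|.
\]
The inductive hypothesis $|\sin(kx)| \le k|\sin(x)|$ then yields $|\sin((k+1)x)| \le (k+1)|\sin(x)|$, completing the induction.

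There is essentially no obstacle here; the fact is a standard one-line induction. The only minor point to note is that one needs the trivial bounds $|\cos(kx)|, |\cos(x)| \le 1$, which hold for all real $x$, so no case analysis on the sign of $\sin(x)$ or on whether $kx$ lies in a particular interval is required.
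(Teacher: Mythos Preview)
Your proposal is correct and matches the paper's proof essentially line for line: induction on $k$, angle addition formula, triangle inequality with $|\cos|\le 1$, then the inductive hypothesis.
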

\begin{proof}
We proceed by induction; the result is trivial for $k = 1$. For the inductive step, note that
\[|\sin((j+1)x)|\le |\sin(jx)\cos x + \sin x\cos jx|\le |\sin(jx)| + |\sin(x)|\le (j+1)|\sin x|.\qedhere\]
\end{proof}

We now prove a lemma saying that the $U^k$-norms behave well with respect to rescaling the width. 

\begin{lemma}\label{lem:rescale-down}
Given a function $f\colon\mathbf{Z}\to\mathbf{C}$ with finite support, subsets of integers $Q_1,\ldots,Q_{k-1}$, and positive integers $L_1, L_2$ such that $L_2\mid L_1$, we have 
\[\snorm{f}_{\Box_{Q_1,\ldots,Q_{k-1},[L_1]}^{k}}^{2^k}\le \snorm{f}_{\Box_{Q_1,\ldots,Q_{k-1},[L_2]}^{k}}^{2^k}.\]
\end{lemma}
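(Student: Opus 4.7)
The plan is to peel off the last coordinate of the box norm, expressing the inner average over $(h_k,h_k')\in[L]^2$ as a squared modulus, and then to combine a partition of $[L_1]$ into translates of $[L_2]$ with Cauchy--Schwarz and translation-invariance of $\sum_{x\in\mathbf{Z}}$ to obtain the monotonicity.

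First, fixing $x\in\mathbf{Z}$ and $\vec{h}=(h_i,h_i')_{i\le k-1}$, writing $h_i^0:=h_i$, $h_i^1:=h_i'$, and letting $\mc{C}$ denote complex conjugation, a direct inspection of the conjugation pattern in the definition of $\Delta'$ shows that if I set
\[
G_{x,\vec{h}}(u) := \prod_{\omega'\in\{0,1\}^{k-1}}\mc{C}^{\,k-1-|\omega'|}\,f\!\left(x+u+\sum_{i=1}^{k-1}h_i^{\omega_i'}\right),
\]
then $\Delta'_{(h_1,h_1'),\ldots,(h_k,h_k')}f(x)=\overline{G_{x,\vec{h}}(h_k)}\,G_{x,\vec{h}}(h_k')$; consequently
\[
\mathbf{E}_{h_k,h_k'\in[L]}\Delta'_{(h_1,h_1'),\ldots,(h_k,h_k')}f(x)
\;=\;\bigl|\mathbf{E}_{h\in[L]}G_{x,\vec{h}}(h)\bigr|^{2}.
\]
This identity (which also underlies the standard proof that the box-norm is nonnegative) isolates all $L$-dependence into the inner average over $h$.

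Next, since $L_2\mid L_1$, I would partition $[L_1]=\bigsqcup_{j=0}^{L_1/L_2-1}(jL_2+[L_2])$ to obtain
\[
\mathbf{E}_{h\in[L_1]}G_{x,\vec{h}}(h)
\;=\;\mathbf{E}_{j\in\{0,\ldots,L_1/L_2-1\}}\mathbf{E}_{h\in[L_2]}G_{x,\vec{h}}(h+jL_2),
\]
and then Jensen's inequality yields
\[
\bigl|\mathbf{E}_{h\in[L_1]}G_{x,\vec{h}}(h)\bigr|^{2}
\;\le\;\mathbf{E}_{j}\bigl|\mathbf{E}_{h\in[L_2]}G_{x,\vec{h}}(h+jL_2)\bigr|^{2}.
\]
The crucial observation is that shifting $h\mapsto h+jL_2$ inside $G_{x,\vec{h}}$ is the same as replacing $x$ by $x+jL_2$ in its definition, i.e.\ $G_{x,\vec{h}}(h+jL_2)=G_{x+jL_2,\vec{h}}(h)$. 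Summing over $x\in\mathbf{Z}$ and averaging over $\vec{h}$, each summand in $\mathbf{E}_j$ is independent of $j$ by translation-invariance of the sum over $x$, and equals $\snorm{f}_{\Box_{Q_1,\ldots,Q_{k-1},[L_2]}^{k}}^{2^{k}}$; this is the desired inequality.

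This is a routine monotonicity argument for box-norms, so I do not expect a real conceptual obstacle; the only care required is in bookkeeping the complex conjugations when peeling off the last coordinate, and in verifying that the partition of $[L_1]$ uses translation by an integer multiple of $L_2$ so that the shift can be absorbed into the $x$-summation.
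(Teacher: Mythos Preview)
Your proof is correct, but it takes a genuinely different route from the paper's at the key step. Both arguments begin the same way, by peeling off the last coordinate so that the problem reduces to comparing $\sum_x|\mathbf{E}_{h\in[L_1]}g(x+h)|^2$ with $\sum_x|\mathbf{E}_{h\in[L_2]}g(x+h)|^2$ for each differenced function $g=\Delta'_{(h_i,h_i')_{i\le k-1}}f$. From there, the paper proceeds via Fourier analysis: it writes $\sum_x\mathbf{E}_{h,h'\in[L]}\Delta'_{(h,h')}g(x)=\int_{-1/2}^{1/2}|\wh{g}(\Theta)|^2\bigl(\sin(L\pi\Theta)/(L\sin(\pi\Theta))\bigr)^2\,d\Theta$ and then invokes the pointwise Dirichlet-kernel inequality $|\sin(kx)|\le k|\sin x|$ with $k=L_1/L_2$. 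Your argument instead partitions $[L_1]$ into $L_1/L_2$ translates of $[L_2]$, applies Cauchy--Schwarz to the average over translates, and absorbs the shift into the unrestricted $x$-sum. Your route is more elementary (no Fourier transform, no kernel estimate) and makes the role of the divisibility hypothesis $L_2\mid L_1$ completely transparent; the paper's Fourier approach, on the other hand, is consistent with the style of the surrounding lemmas in that appendix (notably \cref{lem:mod-up}), which rely on similar kernel computations.
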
 
\begin{proof}
We first reduce to the case $k = 1$. Consider expanding the box-norm; note that 
\[\snorm{f}_{\square_{\Box_{Q_1,\ldots,Q_{k-1},[L]}}^{k}}^{2^k} = \mbf{E}_{h_i,h_i'\in Q_i} \snorm{\Delta_{(h_i,h_i')_{i\in [k-1]}}'f}_{U^1_{[L]}}^{2}\]
for $L\in\{L_1,L_2\}$. Therefore it suffices to prove that
\[\sum_{x\in \mathbf{Z}}\mathbf{E}_{h,h'\in[L_1]}\Delta_{h,h'}'f(x)\le \sum_{x\in \mathbf{Z}}\mathbf{E}_{h,h'\in[L_2]}\Delta_{h,h'}'f(x).\]

Set $W_{L_i}(x) = \mbm{1}_{0<x\le L_i}/L_i$, and note that 
\begin{align*}
&\sum_{x\in \mathbf{Z}}\mathbf{E}_{h,h'\in[L_i]}\Delta_{h,h'} f(x)= \sum_{x,h,h'\in \mathbf{Z}}f(x+h)\ol{f(x+h')}W_{L_i}(h)W_{L_i}(h')\\
&= \int_{0}^1|\wh{f}(\Theta)|^2|\wh{W_{L_i}}(\Theta)|^2~d\Theta= \int_{-1/2}^{1/2}|\wh{f}(\Theta)|^2\bigg(\frac{\sin(L_i\pi\Theta)}{L_i\sin(\pi\Theta)}\bigg)^2~d\Theta.
\end{align*}
Applying \cref{ft:trivial} with $k = L_1/L_2$ yields
\begin{align*}
\int_{-1/2}^{1/2}|\wh{f}(\Theta)|^2\bigg(\frac{\sin(L_1\pi\Theta)}{L_1\sin(\pi\Theta)}\bigg)^2~d\Theta &\le \int_{-1/2}^{1/2}|\wh{f}(\Theta)|^2\bigg(\frac{\sin(L_2\pi\Theta)}{L_2\sin(\pi\Theta)}\bigg)^2~d\Theta,
\end{align*}
as desired. 
\end{proof}

We next prove the analogous inequality with respect to rescaling the difference parameters within the $U^k$-norm.

\begin{lemma}\label{lem:mod-up}
Given an integer $k\ge 1$, there exists $C_k = C_{\ref{lem:mod-up}}(k)>0$ such that the following holds. Given a $1$-bounded function $f\colon\mathbf{Z}\to\mathbf{C}$ such that $\on{supp}(f)\subseteq [\pm N]$, subsets of integers $Q_1,\ldots,Q_{k-1}$ each contained in $[\pm N]$, and positive integers $L_1$, $L_2$ such that $N\ge L_1\ge 2 L_2$, we have 
\[\snorm{f(x)}_{\Box_{Q_1,\ldots,Q_{k-1},[L_1]}^{k}}^{2^k}\le \snorm{f(x)}_{\Box_{Q_1,\ldots,Q_{k-1},L_2\cdot [L_1/L_2]}^{k}}^{2^k} + O\left(\frac{C_k N\cdot L_2}{L_1}\right)\]
\end{lemma}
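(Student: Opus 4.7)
The plan is to reduce to the case $k=1$ and then exhibit $[L_1]$ as a near-disjoint union of $L_2$ translates of $L_2\cdot[L_1/L_2]$, absorbing the boundary defect (which arises since $L_2$ need not divide $L_1$) into the $O(C_kNL_2/L_1)$ error term.

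For the reduction, by the Fubini-style expansion of the box-norm (as in \cref{def:box-norm}),
\[\snorm{f}_{\square^{k}_{Q_1,\ldots,Q_{k-1},Q_k}}^{2^k} = \mathbf{E}_{h_i,h_i'\in Q_i,\, i\in[k-1]}\,\snorm{\Delta'_{(h_i,h_i')_{i\in[k-1]}}f}_{\square^{1}_{Q_k}}^{2}\]
for $Q_k\in\{[L_1],\,L_2\cdot[L_1/L_2]\}$. For any fixed choice of shifts $(h_i,h_i')$, the function $g:=\Delta'_{(h_i,h_i')_{i\in[k-1]}}f$ is $1$-bounded and, since each $\Delta'$ with shift in $[\pm N]$ enlarges the support by at most $N$, it is supported in an interval of length $O(kN)$. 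It therefore suffices to prove, for any such $g$, the $k=1$ estimate
\[\snorm{g}_{\square^{1}_{[L_1]}}^{2}\le\snorm{g}_{\square^{1}_{L_2\cdot[L_1/L_2]}}^{2}+O\!\left(\frac{kNL_2}{L_1}\right).\]

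For the $k=1$ bound, set $M:=\lfloor L_1/L_2\rfloor$ and $r:=L_1-ML_2\in\{0,1,\ldots,L_2-1\}$. Partitioning $\{1,\ldots,ML_2\}$ by residue modulo $L_2$ produces the clean identity
\[\mathbf{E}_{h\in[L_1]}g(x+h)=\frac{ML_2}{L_1}\mathbf{E}_{i\in[L_2]}G_i(x)+E(x),\quad G_i(x):=\mathbf{E}_{h'\in L_2\cdot[M]}g(x+i-L_2+h'),\]
where the boundary term $E(x):=L_1^{-1}\sum_{h=ML_2+1}^{L_1}g(x+h)$ has magnitude at most $r/L_1\le L_2/L_1$ by $1$-boundedness of $g$. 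Using $ML_2/L_1\le 1$ and $|\mathbf{E}_{i\in[L_2]}G_i(x)|\le 1$, expanding the square yields the pointwise estimate $|\mathbf{E}_{h\in[L_1]}g(x+h)|^2\le|\mathbf{E}_{i\in[L_2]}G_i(x)|^2+O(L_2/L_1)$.

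To conclude, I sum over the $O(kN)$-length support of the functions involved, which converts the pointwise error into an acceptable additive loss of $O(kNL_2/L_1)$, and then apply Cauchy--Schwarz $|\mathbf{E}_{i\in[L_2]}G_i|^2\le\mathbf{E}_{i\in[L_2]}|G_i|^2$ together with the translation invariance $\sum_x|G_i(x)|^2=\snorm{g}_{\square^{1}_{L_2\cdot[M]}}^{2}$ (independent of $i$) to bound the main term by $\snorm{g}_{\square^{1}_{L_2\cdot[L_1/L_2]}}^{2}$. The only real obstacle is the bookkeeping around the boundary contribution of size $r<L_2$ from $L_2\nmid L_1$: this is exactly what drives the $O(L_2/L_1)$ pointwise loss and, once summed over the support of the iterated $\Delta'$-differences of $f$, produces the claimed $O(C_kNL_2/L_1)$ error with $C_k=O(k)$.
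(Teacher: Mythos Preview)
Your proof is correct and takes a genuinely different route from the paper's. The paper proceeds via Fourier analysis: after the same reduction to $k=1$, it writes $\snorm{g}_{\square^1_{Q}}^2=\int_{\mathbf{T}}|\wh{g}(\Theta)|^2|\wh{W_Q}(\Theta)|^2\,d\Theta$ for the normalized indicator $W_Q$, and then compares the two Dirichlet-type kernels $(\sin(L_1\pi\Theta)/(L_1\sin\pi\Theta))^2$ and $(\sin(\lfloor L_1/L_2\rfloor L_2\pi\Theta)/(\lfloor L_1/L_2\rfloor\sin L_2\pi\Theta))^2$ pointwise in frequency, using \cref{ft:trivial} ($|\sin(kx)|\le k|\sin x|$) and a somewhat delicate estimate on the difference of the kernels near $\Theta=0$, before integrating against $|\wh{g}|^2$. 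Your argument instead stays entirely in physical space: the residue-class partition of $[ML_2]$ plus Jensen's inequality $|\mathbf{E}_iG_i|^2\le\mathbf{E}_i|G_i|^2$ does all the work, and translation invariance of $\sum_x|G_i(x)|^2$ collapses the shifted copies. Your approach is more elementary (no Fourier transforms, no kernel comparisons) and makes the dependence $C_k=O(k)$ explicit; the paper's approach has the virtue of mirroring the proof of the companion \cref{lem:rescale-down}, keeping both lemmas in a common Fourier framework.
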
 
\begin{proof}
One can reduce to the case $k=1$ as in \cref{lem:rescale-down}. It, therefore, suffices to prove that
\[\sum_{x\in \mathbf{Z}}\mathbf{E}_{h,h'\in[L_1]}\Delta_{h,h'} f(x)\le \sum_{x\in \mathbf{Z}}\mathbf{E}_{h,h'\in L_2\cdot [L_1/L_2]}\Delta_{h,h'} f(x) + O\bigg(\frac{N\cdot L_2}{L_1}\bigg).\]
Via a direct Fourier-analytic computation, we have 
\begin{equation*}
\sum_{x\in \mathbf{Z}}\mathbf{E}_{h,h'\in[L_1]}\Delta_{h,h'}' f(x)= \int_{-1/2}^{1/2}|\wh{f}(\Theta)|^2\bigg(\frac{\sin(L_1\pi\Theta)}{L_1\sin(\pi\Theta)}\bigg)^2~d\Theta
\end{equation*}
and
\begin{equation*}
\sum_{x\in \mathbf{Z}}\mathbf{E}_{h,h'\in L_2\cdot [L_1/L_2]}\Delta_{h,h'}' f(x) = \int_{-1/2}^{1/2}|\wh{f}(\Theta)|^2\bigg(\frac{\sin(\lfloor L_1/L_2\rfloor \pi(L_2\Theta))}{\lfloor L_1/L_2\rfloor\sin(L_2\pi\Theta)}\bigg)^2~d\Theta.
\end{equation*}
Using \cref{ft:trivial} in the denominator, we have
\[
\bigg(\frac{\sin(L_1\pi\Theta)}{L_1\sin(\pi\Theta)}\bigg)^2\le\bigg(\frac{L_2\sin(L_1\pi\Theta)}{L_1\sin(L_2\pi\Theta)}\bigg)^2.\]
Next, note that 
\begin{align*}
\bigg|\bigg(\frac{L_2\sin(L_1\pi\Theta)}{L_1\sin(L_2\pi\Theta)}\bigg)^2&- \bigg(\frac{\sin(\lfloor L_1/L_2\rfloor \pi(L_2\Theta))}{\lfloor L_1/L_2\rfloor\sin(L_2\pi\Theta)}\bigg)^2\bigg|\\
&\le \frac{2L_2|\pi \Theta|}{|\sin(L_2\pi \Theta)|^2}\cdot \bigg|\frac{L_2\sin(L_1\pi\Theta)}{L_1}-\frac{\sin(\lfloor L_1/L_2\rfloor \pi(L_2\Theta))}{\lfloor L_1/L_2\rfloor}\bigg|\\
&\ll \frac{L_2|\pi \Theta|}{|\sin(L_2\pi \Theta)|^2}\cdot\bigg(\frac{L_2^2|\Theta|}{L_1}\bigg)\ll\frac{L_2^3|\Theta|^2}{L_1|\sin(L_2\pi\Theta)|^2}.
\end{align*}
Therefore, it follows that 
\begin{align*}
&\sum_{x\in \mathbf{Z}}\mathbf{E}_{h,h'\in[L_1]}\Delta_{h,h'}' f(x)= \int_{-1/2}^{1/2}|\wh{f}(\Theta)|^2\bigg(\frac{\sin(L_1\pi\Theta)}{L_1\sin(\pi\Theta)}\bigg)^2~d\Theta\\
&= \int_{-1/(4L_2)}^{1/(4L_2)}|\wh{f}(\Theta)|^2\bigg(\frac{\sin(L_1\pi\Theta)}{L_1\sin(\pi\Theta)}\bigg)^2~d\Theta + O\bigg(\frac{N\cdot L_2^2}{L_1^2}\bigg)\\
&\le \int_{-1/(4L_2)}^{1/(4L_2)}|\wh{f}(\Theta)|^2\bigg(\bigg(\frac{\sin(\lfloor L_1/L_2\rfloor \pi(L_2\Theta))}{\lfloor L_1/L_2\rfloor\sin(L_2\pi\Theta)}\bigg)^2 + O\bigg(\frac{L_2^3|\Theta|^2}{L_1|\sin(L_2\pi\Theta)|^2}\bigg)\bigg)~d\Theta+ O\bigg(\frac{N\cdot L_2^2}{L_1^2}\bigg)\\
&\le\sum_{x\in \mathbf{Z}}\mathbf{E}_{h,h'\in L_2\cdot [L_1/L_2]}\Delta_{h,h'}' f(x) +O\bigg(\frac{N\cdot L_2}{L_1}\bigg),
\end{align*}
as desired.
\end{proof}

We will also require the following version of $U^2$-inverse theorem, which appears as \cite[Lemma~2.4]{Pel20}.
\begin{lemma}[{\cite[Lemma~2.4]{Pel20}}]\label{lem:U2-inver}
Let $N\ge 1$ and $f\colon\mathbf{Z}\to\mathbf{C}$ be $1$-bounded such that $\on{supp}(f)\subseteq [N]$. If 
\[\snorm{f}_{U_{[\delta'N]}^{2}}^{4}\ge \delta N\]
then 
\[\sup_{\beta\in \mathbf{T}} \bigg|\sum_{x\in \mathbf{Z}}e(\beta x)f(x)\bigg|\gg (\delta \delta')^{O(1)}N.\]
\end{lemma}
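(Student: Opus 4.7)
The plan is a direct Fourier-analytic computation, in the spirit of the $L^6$-estimates in \cref{sec:circle}: expand $\snorm{f}_{U^2_{[\delta'N]}}^4$ on the Fourier side, then extract a single large Fourier coefficient by Cauchy--Schwarz.

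First I would unfold the definition of the box-norm and change variables $a=h_1'-h_1$, $b=h_2'-h_2$. The distributions of $a$ and $b$ are then exactly the Fej\'er kernel $\mu_{\delta'N}$ from~\eqref{eq:fejer}, so
\[\snorm{f}_{U^2_{[\delta'N]}}^4=\sum_{a,b\in\mathbf{Z}}\mu_{\delta'N}(a)\mu_{\delta'N}(b)\sum_u f(u)\overline{f(u+a)}\,\overline{f(u+b)}f(u+a+b).\]
Applying Fourier inversion to each factor and summing over $u$ produces a delta constraint $\theta_1-\theta_2-\theta_3+\theta_4=0$ among the four frequencies, converting the identity into
\[\snorm{f}_{U^2_{[\delta'N]}}^4=\int_{\mathbf{T}^2}\wh{\mu_{\delta'N}}(\alpha)\wh{\mu_{\delta'N}}(\beta)\int_{\mathbf{T}}\wh{f}(\theta)\overline{\wh{f}(\theta+\alpha)}\,\overline{\wh{f}(\theta+\beta)}\wh{f}(\theta+\alpha+\beta)\,d\theta\,d\alpha\,d\beta,\]
where the outer weight $\wh{\mu_{\delta'N}}(\alpha)=|\wh{W_{\delta'N}}(\alpha)|^2\ge 0$ is nonnegative because $\mu_{\delta'N}$ is an autocorrelation of a normalized indicator of $[\delta'N]$.

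Next I would pair the inner integrand as $[\wh{f}(\theta)\overline{\wh{f}(\theta+\beta)}][\overline{\wh{f}(\theta+\alpha)}\wh{f}(\theta+\alpha+\beta)]$ and apply Cauchy--Schwarz in $\theta$. By translation invariance, both resulting $L^2$-integrals equal $\int_{\mathbf{T}}|\wh{f}(\theta)|^2|\wh{f}(\theta+\beta)|^2\,d\theta$, so the inner integral is bounded by $\sup_\theta|\wh{f}(\theta)|^2\cdot\int_{\mathbf{T}}|\wh{f}|^2$. Parseval gives $\int_{\mathbf{T}}|\wh{f}|^2=\snorm{f}_{\ell^2}^2\le N$ (using $1$-boundedness and the support condition), producing a bound independent of $\alpha,\beta$. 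Integrating against the Fej\'er weights then costs a factor $(\int_{\mathbf{T}}\wh{\mu_{\delta'N}}(\alpha)\,d\alpha)^2=\mu_{\delta'N}(0)^2\ll(\delta'N)^{-2}$. Combining with the hypothesis $\snorm{f}_{U^2_{[\delta'N]}}^4\ge\delta N$ yields $\sup_\theta|\wh{f}(\theta)|^2\gg\delta(\delta')^2N^2$, i.e.\ $\sup_\theta|\wh{f}(\theta)|\gg\sqrt{\delta}\,\delta' N\gg(\delta\delta')^{O(1)}N$.

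I do not anticipate a serious obstacle here: the only point requiring mild care is identifying the Fej\'er kernel weights after the change of variables $(h_1,h_1')\mapsto h_1'-h_1$ (and similarly for $h_2,h_2'$) and checking that they are nonnegative, so that Cauchy--Schwarz can be applied inside the outer $(\alpha,\beta)$-integrals without absolute values destroying the estimate. Everything else is routine bookkeeping.
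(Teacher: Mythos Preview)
Your argument is correct: the Fej\'er-kernel expansion of $\snorm{f}_{U^2_{[\delta'N]}}^4$ on the Fourier side, followed by Cauchy--Schwarz in $\theta$ and the observation $\int_{\mathbf{T}}\wh{\mu_{\delta'N}}=\mu_{\delta'N}(0)=1/\lfloor\delta'N\rfloor$, gives exactly $\sup_\theta|\wh{f}(\theta)|\gg\sqrt{\delta}\,\delta'N$. The paper itself does not supply a proof here---it simply cites \cite[Lemma~2.4]{Pel20}---so there is nothing to compare against, but your route is the standard one and matches what is done in that reference.
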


We also have the following well-known converse to the $U^2$-inverse theorem. We include the proof, as our definition of the $U^2$-norm is slightly nonstandard.
\begin{lemma}\label{lem:converse}
Let $f\colon\mathbf{Z}\to\mathbf{C}$ be a $1$-bounded function with $\on{supp}(f)\subseteq [\delta^{-1} N]$. If $N\gg \delta^{-O(1)}$ and
\[\sup_{\beta\in \mathbf{T}} \bigg|\sum_{x\in \mathbf{Z}}e(\beta x)f(x)\bigg|\ge \delta N\]
then
\[\snorm{f}_{U^{2}_{[N]}}^{4}\gg \delta^{O(1)}N.\]
\end{lemma}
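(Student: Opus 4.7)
The plan is to reduce to the case $\beta^{\ast}=0$, expand the $U^2$-norm as an $\ell^2$-norm of a correlation function, and then Fourier-analyze to exploit Lipschitz continuity of $\widehat{f}$. First, let $\beta^{\ast}\in\mathbf{T}$ be a near-maximizer, and set $g(x)=f(x)e(-\beta^{\ast}x)$. A direct expansion of the $U^2$-box-norm shows that all linear phases cancel out in
\[g(x+h+k)\overline{g(x+h+k')}\overline{g(x+h'+k)}g(x+h'+k'),\]
so $\snorm{g}_{U^2_{[N]}}=\snorm{f}_{U^2_{[N]}}$. Thus, I may assume $\lvert\widehat{f}(0)\rvert\ge\delta N$.

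Next, define
\[G(y):=\mathbf{E}_{h,h'\in[N]}\overline{f(y+h)}f(y+h').\]
Expanding the $U^2_{[N]}$-norm and renaming variables gives the identity
\[\snorm{f}_{U^2_{[N]}}^4=\sum_{y\in\mathbf{Z}}\lvert G(y)\rvert^2.\]
Since the support of $G$ is contained in $[-N,\delta^{-1}N]$, of size $\le 2\delta^{-1}N$, Cauchy--Schwarz gives
\[\snorm{f}_{U^2_{[N]}}^4\ge\frac{\bigl\lvert\sum_y G(y)\bigr\rvert^2}{2\delta^{-1}N}.\]
It therefore suffices to show $\sum_y G(y)\gg\delta^{O(1)}N$.

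Applying Parseval,
\[\sum_y G(y)=\int_{\mathbf{T}}\lvert\widehat{f}(\alpha)\rvert^{2}\bigl\lvert\phi(\alpha)\bigr\rvert^{2}d\alpha,\qquad \phi(\alpha):=\mathbf{E}_{h\in[N]}e(h\alpha).\]
Since $\lvert\phi(\alpha)\rvert^{2}\ge c_{0}$ for all $\lvert\alpha\rvert\le c_{1}/N$, it suffices to lower-bound $\lvert\widehat{f}(\alpha)\rvert$ on such a small window. Using $\snorm{f}_{\infty}\le 1$ and $\on{supp}(f)\subseteq[\delta^{-1}N]$, we have the trivial Lipschitz estimate
\[\lvert\widehat{f}(\alpha)-\widehat{f}(0)\rvert\le 2\pi\lvert\alpha\rvert\sum_{x}\lvert f(x)\rvert\cdot\lvert x\rvert\le 2\pi\delta^{-2}N^{2}\lvert\alpha\rvert,\]
so $\lvert\widehat{f}(\alpha)\rvert\ge\delta N/2$ for $\lvert\alpha\rvert\le\delta^{3}/(4\pi N)$. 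Integrating the resulting pointwise lower bound over this window (which fits inside the window where $\lvert\phi\rvert^{2}\ge c_{0}$ once $N\gg\delta^{-O(1)}$) yields $\sum_yG(y)\gg\delta^{5}N$, and combining with the Cauchy--Schwarz step above gives $\snorm{f}_{U^2_{[N]}}^4\gg\delta^{11}N$, which is of the claimed form.

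There is no serious obstacle: the only point requiring care is ensuring the Lipschitz window $\lvert\alpha\rvert\lesssim\delta^{3}/N$ lies inside the Fej\'er window $\lvert\alpha\rvert\lesssim 1/N$, which uses the hypothesis $N\gg\delta^{-O(1)}$ (and the standing assumption $\delta\le 1/2$).
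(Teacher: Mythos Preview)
Your overall strategy --- twist away the phase, write the $U^2$-norm as a sum of nonnegative terms, apply Cauchy--Schwarz to reduce to $\int_{\mathbf{T}}|\widehat{f}|^{2}|\phi|^{2}$, and then exploit the Lipschitz bound on $\widehat{f}$ near the origin --- is precisely what the paper does. However, the claimed identity
\[
\snorm{f}_{U^{2}_{[N]}}^{4}=\sum_{y\in\mathbf{Z}}|G(y)|^{2}
\]
is false. Since $G(y)=\mathbf{E}_{h,h'}\overline{f(y+h)}f(y+h')=|\mathbf{E}_{h}f(y+h)|^{2}$, your right-hand side is $\sum_{y}|\mathbf{E}_{h}f(y+h)|^{4}$, in which the four shifts are independent. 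The $U^{2}$-norm, by contrast, carries the parallelogram constraint $(x+h_{1}+h_{2})+(x+h_{1}'+h_{2}')=(x+h_{1}+h_{2}')+(x+h_{1}'+h_{2})$, and no mere renaming of variables removes it. Concretely, for $f=\mbm{1}_{\{1\}}$ one computes $\snorm{f}_{U^{2}_{[N]}}^{4}=N^{-2}$ while $\sum_{y}|G(y)|^{2}=N^{-3}$.

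The fix is easy and lands you exactly where you wanted. The correct sum-of-squares expansion keeps three outer variables:
\[
\snorm{f}_{U^{2}_{[N]}}^{4}=\sum_{x\in\mathbf{Z}}\mathbf{E}_{h_{1},h_{1}'\in[N]}\Big|\mathbf{E}_{h_{2}\in[N]}f(x+h_{1}+h_{2})\overline{f(x+h_{1}'+h_{2})}\Big|^{2}.
\]
Applying Cauchy--Schwarz over $(x,h_{1},h_{1}')$ (whose effective range has size $O(\delta^{-1}N)$ by the support hypothesis) and then shifting $x\mapsto x-h_{2}$ gives exactly
\[
\snorm{f}_{U^{2}_{[N]}}^{4}\gg\frac{\delta}{N}\bigg|\sum_{x}\mathbf{E}_{h_{1},h_{1}'}f(x+h_{1})\overline{f(x+h_{1}')}\bigg|^{2}=\frac{\delta}{N}\bigg(\int_{\mathbf{T}}|\widehat{f}(\Theta)|^{2}|\phi(\Theta)|^{2}\,d\Theta\bigg)^{2},
\]
after which your Lipschitz-window argument goes through unchanged.
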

\begin{proof}
By adjusting implicit constants, we may assume that $\delta$ is smaller than an absolute constant throughout. Let $\beta$ be such that 
\[\bigg|\sum_{x\in \mathbf{Z}}e(\beta x)f(x)\bigg|\ge \delta N\]
and define $f^{(1)}(x) = f(x) e(\beta x)$. Note that
\begin{align*}
\snorm{f}_{U^{2}_{[N]}}^{4} &= \sum_{x\in \mathbf{Z}}\mathbf{E}_{\substack{h_1,h_1'\in [N]\\h_2,h_2'\in [N]}} f(x+h_1+h_2)\ol{f(x+h_1+h_2')}\ol{f(x+h_1'+h_2)}f(x+h_1'+h_2')\\
&= \sum_{x\in \mathbf{Z}}\mathbf{E}_{\substack{h_1,h_1'\in [N]\\h_2,h_2'\in [N]}} f^{(1)}(x+h_1+h_2)\ol{f^{(1)}(x+h_1+h_2')}\ol{f^{(1)}(x+h_1'+h_2)}f^{(1)}(x+h_1'+h_2')\\
&= \sum_{x\in \mathbf{Z}}\mbm{1}_{|x|\le 5\delta^{-1}N}\mathbf{E}_{\substack{h_1,h_1'\in [N]}} \bigg|\mathbf{E}_{h_2\in [N]}f^{(1)}(x+h_1+h_2)\ol{f^{(1)}(x+h_1'+h_2)}\bigg|^2\\
&\ge \bigg(\sum_{x\in \mathbf{Z}}\mbm{1}_{|x|\le 5\delta^{-1}N}\mathbf{E}_{\substack{h_1,h_1'\in [N]}} \bigg|\mathbf{E}_{h_2\in [N]}f^{(1)}(x+h_1+h_2)\ol{f^{(1)}(x+h_1'+h_2)}\bigg|\bigg)^{2}\\
&\qquad\cdot\bigg(\sum_{x\in \mathbf{Z}}\mbm{1}_{|x|\le 5\delta^{-1}N}\mathbf{E}_{\substack{h_1,h_1'\in [N]}} 1\bigg)^{-1} \\
&\gg \delta N^{-1}\bigg|\sum_{x\in \mathbf{Z}}\mbm{1}_{|x|\le 5\delta^{-1}N}\mathbf{E}_{\substack{h_1,h_1'\in [N]}} \mathbf{E}_{h_2\in [N]}f^{(1)}(x+h_1+h_2)\ol{f^{(1)}(x+h_1'+h_2)}\bigg|^{2}\\
& = \delta N^{-1}\bigg|\sum_{x\in\mathbf{Z}}\mathbf{E}_{\substack{h_1,h_1'\in [N]}}f^{(1)}(x+h_1)\ol{f^{(1)}(x+h_1')}\bigg|^{2}\\
& = \delta N^{-1}\bigg(\int_{\mathbf{T}}|\wh{f^{(1)}}(\Theta)|^2\cdot \bigg(\frac{\sin(N\Theta/2)}{N\sin(\Theta/2)}\bigg)^2~d\Theta\bigg)^2\gg \delta^{O(1)}N,
\end{align*}
where, by construction, $|\wh{f^{(1)}}(0)|\ge \delta N$, and therefore $|\wh{f^{(1)}}(\Theta)|\ge \delta N/2$ for $|\Theta|\le \delta^{4}N^{-1}$.
\end{proof}

By writing the $2^k$-th power of the $U^k$-norm for $k\geq 2$ as the sum of the $4$-th powers of $U^2$-norms of differenced functions and applying \cref{lem:U2-inver,lem:converse,lem:rescale-down}, and then iterating, we thus deduce the following rescaling inequality for the $U^k$-norm. 
\begin{corollary}\label{cor:rescale-loss}
Fix an integer $k\ge 2$. Let $f\colon\mathbf{Z}\to\mathbf{C}$ is $1$-bounded such that $\on{supp}(f)\subseteq [N]$, $N\ge \delta^{-O_k(1)}$, and
\[\snorm{f}_{U_{[\delta N]}^{k}}^{2^{k}}\ge \delta N.\]
Then if $\delta'\in [\delta,\delta^{-1}]$, we have
\[\snorm{f}_{U_{[\delta' N]}^{k}}^{2^{k}}\gg \delta^{O_k(1)} N.\]
\end{corollary}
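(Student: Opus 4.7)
The plan is to reduce to the $U^2$-case by expanding the $U^k$-norm once as an average of fourth powers of $U^2$-norms of differenced functions, then to swap window sizes at the $U^2$-level via the pair \cref{lem:U2-inver} and \cref{lem:converse}. Concretely, writing $\mathbf{h} = (h_1,h_1',\dots,h_{k-2},h_{k-2}')$ and $F_{\mathbf{h}} := \Delta'_{(h_1,h_1'),\dots,(h_{k-2},h_{k-2}')}f$, the definition of the box-norm gives
\[
\snorm{f}_{U^k_{[\delta N]}}^{2^k} = \mathbf{E}_{\mathbf{h}\in [\delta N]^{2(k-2)}}\,\snorm{F_\mathbf{h}}_{U^2_{[\delta N]}}^4 \ge \delta N,
\]
where each $F_\mathbf{h}$ is $1$-bounded with support in an interval of length $O_k(N)$. (For $k=2$ this outer expectation is trivial and only the base-case swap below is needed.) Since every summand is nonnegative and $[\delta N]\subseteq[\delta'N]$, I enlarge the outer average at a cost of $(\delta/\delta')^{2(k-2)}\ge\delta^{4(k-2)}$ (using $\delta'\le\delta^{-1}$), giving
\[
\mathbf{E}_{\mathbf{h}\in[\delta'N]^{2(k-2)}}\snorm{F_\mathbf{h}}_{U^2_{[\delta N]}}^4 \gg \delta^{O_k(1)}\,N.
\]
By Markov's inequality, for at least a $\gg\delta^{O_k(1)}$-fraction of $\mathbf{h}\in[\delta'N]^{2(k-2)}$ one has $\snorm{F_\mathbf{h}}_{U^2_{[\delta N]}}^4 \gg \delta^{O_k(1)}N$, and such $F_\mathbf{h}$ are $1$-bounded with support in an interval of length $O_k(\delta^{-1}N)$.

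For each such $\mathbf{h}$ I carry out the base-case swap at the $U^2$-level. Applying \cref{lem:U2-inver} to $F_\mathbf{h}$ (with support parameter $N_\ast = O_k(\delta^{-1}N)$ and window parameter $\delta N/N_\ast = \Theta_k(\delta^{2})$) produces a phase $\beta_\mathbf{h}\in\mathbf{T}$ with
\[
\bigl|\widehat{F_\mathbf{h}}(\beta_\mathbf{h})\bigr| \gg \delta^{O_k(1)}\,N.
\]
Then \cref{lem:converse}, with window $[\delta'N]$ (for which the support parameter becomes $N_\ast/(\delta'N)=O_k(\delta^{-2})$ and the hypothesis $\delta'N\gg\delta^{-O_k(1)}$ follows from $N\gg\delta^{-O_k(1)}$ and $\delta'\ge\delta$), upgrades this correlation back into a $U^2$-bound with the new window:
\[
\snorm{F_\mathbf{h}}_{U^2_{[\delta'N]}}^4 \gg \delta^{O_k(1)}\,N.
\]
All polynomial factors in $\delta$ and $\delta'$ combine cleanly into $\delta^{O_k(1)}$ because $\delta'\in[\delta,\delta^{-1}]$. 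Substituting into the outer average,
\[
\snorm{f}_{U^k_{[\delta'N]}}^{2^k} = \mathbf{E}_{\mathbf{h}\in[\delta'N]^{2(k-2)}}\snorm{F_\mathbf{h}}_{U^2_{[\delta'N]}}^4 \gg \delta^{O_k(1)}\cdot\delta^{O_k(1)}\,N \gg \delta^{O_k(1)}\,N,
\]
which is the desired conclusion.

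The only real obstacle is the bookkeeping in the inverse/converse swap: one must verify that, despite the change of window from $[\delta N]$ to $[\delta'N]$, the support/window ratios appearing in \cref{lem:U2-inver} and \cref{lem:converse} remain polynomial in $\delta$ uniformly in $\delta'\in[\delta,\delta^{-1}]$, so that the final loss has the form $\delta^{O_k(1)}$. \cref{lem:rescale-down} plays an auxiliary role in tidying up: any minor divisibility mismatches between $[\delta N]$, $[\delta'N]$, and their integer truncations can be absorbed by replacing the window with a divisor, since the $U^2$-norm is monotone (and hence only larger) on the smaller window.
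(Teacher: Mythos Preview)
Your proposal is correct and follows essentially the same approach the paper sketches: expand the $U^k$-norm as an average of $U^2$-norms of differenced functions, then use the inverse/converse pair \cref{lem:U2-inver,lem:converse} to swap the $U^2$ window from $[\delta N]$ to $[\delta' N]$, with \cref{lem:rescale-down} absorbing floor/divisibility issues. The one cosmetic difference is that where the paper says ``iterating'' (presumably changing one coordinate at a time), you instead handle all $k-2$ outer windows in a single stroke by the positivity observation that enlarging the averaging set $[\delta N]^{2(k-2)}$ to $[\delta' N]^{2(k-2)}$ costs only the ratio $(\delta/\delta')^{2(k-2)}\ge\delta^{4(k-2)}$; this is a perfectly valid shortcut.
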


We next require the elementary fact that if a $1$-bounded function correlates with an exponential phase on a arithmetic progression of a positive density, this may be extended to the full interval with only polynomial loss. This is essentially \cite[Lemma~3.5(ii)]{GTZ11} or \cite[Proposition~A.4]{Alt22}; we provide a proof for completeness. 
\begin{lemma}\label{lem:interval-corr}
Suppose that $f\colon\mathbf{Z}\to\mathbf{C}$ is a $1$-bounded function such that $\on{supp}(f)\subseteq [\pm N]$, $N\ge \delta^{-O(1)}$, and there exists an arithmetic progression $P$ contained in $[N]$ such that 
\[\sup_{\beta\in \mathbf{T}}\bigg|\sum_{x\in P}e(\beta x) f(x)\bigg|\ge \delta N.\]
Then, 
\[\sup_{\beta\in \mathbf{T}}\bigg|\sum_{x\in \mathbf{Z}}e(\beta x) f(x)\bigg|\gg \delta^{O(1)} N.\]
\end{lemma}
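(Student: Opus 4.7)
The plan is to reduce to the case when $P$ is an interval, and then handle the interval case via a Fej\'er smoothing argument.

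Write $P=\{a+qj:0\le j<L\}$. Since $f$ is $1$-bounded, the hypothesis forces $L\ge \delta N$, and combined with $qL\le N$ this gives $q\le \delta^{-1}$. Setting $g(j):=f(a+qj)$, the function $g$ is $1$-bounded, and a direct change of variables converts the hypothesis to
\[
\bigg|\sum_{j=0}^{L-1} e(\alpha j)\,g(j)\bigg|\ge \delta N,\qquad \alpha:=q\beta.
\]
Using $\mathbf{1}_{x\equiv a\,(\mathrm{mod}\,q)}=\frac{1}{q}\sum_{k=0}^{q-1} e(k(x-a)/q)$ and expanding, one obtains the identity
\[
\wh{g}(\gamma)=e(\gamma a/q)\cdot \frac{1}{q}\sum_{k=0}^{q-1} e(-ka/q)\,\wh{f}((\gamma-k)/q),
\]
from which $\sup_\gamma |\wh{g}(\gamma)|\le \sup_\eta|\wh{f}(\eta)|$. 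It therefore suffices to produce some $\gamma$ with $|\wh{g}(\gamma)|\gg \delta^{O(1)}N$.

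For this, take $H:=\lfloor \delta^2 L/100\rfloor$, which is $\ge 1$ by the assumption $N\ge \delta^{-O(1)}$, and introduce the smoothed indicator
\[
\phi := \mathbf{1}_{[H,L-1-H]}\ast \mu_{H+1},
\]
where $\mu_{H+1}$ is the normalized Fej\'er kernel of~\eqref{eq:fejer}. A direct inspection shows that $0\le \phi\le 1$, that $\phi\equiv 1$ on $[2H,L-1-2H]$, that $\phi$ is supported in $[0,L-1]$, and that $|\phi-\mathbf{1}_{[0,L-1]}|$ is pointwise bounded by the indicator of a set of size at most $5H\le \delta N/20$. Since $e(\alpha\cdot)g$ is $1$-bounded, this yields
\[
\bigg|\sum_{x\in\mathbf{Z}} e(\alpha x)\,g(x)\,\phi(x)\bigg|\ge \delta N/2.
\]
Meanwhile, Cauchy--Schwarz and Parseval give
\[
\snorm{\wh{\phi}}_{L^1(\mathbf{T})}\le \snorm{\wh{\mathbf{1}_{[H,L-1-H]}}}_{L^2(\mathbf{T})}\cdot \snorm{\wh{\mu_{H+1}}}_{L^2(\mathbf{T})}\ll \sqrt{L}\cdot H^{-1/2}\ll \delta^{-1}.
\]
Combining via Fourier inversion and the translation-invariance of $\snorm{\cdot}_{L^1(\mathbf{T})}$,
\[
\delta N/2 \le \sup_\theta|\wh{g}(\theta)|\cdot \snorm{\wh{\phi}}_{L^1(\mathbf{T})}\ll \delta^{-1}\sup_\theta|\wh{g}(\theta)|,
\]
so $\sup_\theta|\wh{g}(\theta)|\gg \delta^2 N$. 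Combining with the reduction above gives the desired bound $\sup_\theta|\wh{f}(\theta)|\gg \delta^2 N$.

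The only mildly subtle point is the balanced choice $H\asymp \delta^2 L$, which simultaneously ensures that the pointwise smoothing error $|\phi-\mathbf{1}_{[0,L-1]}|$ is of total mass smaller than the correlation strength $\delta N$ and that the Fourier $L^1$-norm bound $\snorm{\wh{\phi}}_{L^1(\mathbf{T})}\ll \sqrt{L/H}$ remains $O(\delta^{-1})$. Everything else is routine bookkeeping.
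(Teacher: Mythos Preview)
Your proof is correct and follows essentially the same smoothing strategy as the paper: both replace the sharp cutoff $\mathbf{1}_P$ by a nearby function with bounded Fourier $L^1$-norm and then conclude via Fourier inversion. The only cosmetic difference is that you first reduce to an interval via the change of variables $g(j)=f(a+qj)$ and the observation $\sup_\gamma|\wh{g}(\gamma)|\le\sup_\eta|\wh{f}(\eta)|$, whereas the paper smooths $\mathbf{1}_P$ directly by convolving with a short progression of the same modulus.
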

\begin{proof}
Since $f$ is $1$-bounded, $P$ must have length at least $\delta N$. Therefore, $\mbm{1}_{P}(x) = \mbm{1}_{q|(x-a)}\mbm{1}_{I}(x)$ for an interval $I$ of length at least $\delta N$ and $0\le a<q\le \delta^{-1}$. Let 
\[P_2 = \mbm{1}_{P}\ast\bigg(\frac{\mbm{1}_{q|x}\mbm{1}_{[\delta^{3}N]}(x)}{q^{-1}\cdot \delta^{3}N}\bigg).\]
By construction, there exists $\beta\in \mathbf{T}$ such that 
\[\bigg|\sum_{x\in \mathbf{Z}}P_2(x) e(\beta x)f(x)\bigg|\ge 2^{-1}\delta N.\]
Therefore, letting $f_\beta(x) = e(\beta x)f(x)$ and taking the Fourier transform, we have 
\begin{align*}
2^{-1}\delta N &\le \sup_{\beta\in \mathbf{T}}\bigg|\sum_{x\in \mathbf{Z}}P_2(x) e(\beta x)f(x)\bigg|=\sup_{\beta\in \mathbf{T}}\bigg|\int_{\mathbf{T}}\wh{P_2}(\Theta)\cdot \wh{f_\beta}(\Theta)~d\Theta\bigg|\\
&\le \sup_{\substack{\beta\in \mathbf{T}\\ \Theta\in \mathbf{T}}}|\wh{f_\beta}(\Theta)|\cdot \int_{\mathbf{T}}|\wh{P_2}(\Theta)|~d\Theta\ll \delta^{-O(1)} \sup_{\substack{\beta\in \mathbf{T}\\ \Theta\in \mathbf{T}}}|\wh{f_\beta}(\Theta)|\\
&\ll \delta^{-O(1)} \sup_{\beta\in \mathbf{T}}\Big|\sum_{x\in \mathbf{Z}}e(\beta x)f(x)\Big|,
\end{align*}
where we bound the $L^{1}$-norm of $\wh{P_2}(\Theta)$ by using the Cauchy--Schwarz inequality.
\end{proof}

Analogously to~\cref{cor:rescale-loss}, by writing the $2^k$-th power of the $U^k$-norm for $k\geq 2$ as the sum of $4$-th powers of the $U^2$-norms of differenced functions, and applying \cref{lem:U2-inver,lem:converse,lem:rescale-down,lem:interval-corr} and then iterating, we thus deduce another rescaling inequality for the $U^k$-norm. 
\begin{corollary}\label{cor:mod-loss}
Fix an integer $k\ge 2$. Let $L\le \delta^{-1}$, $f\colon\mathbf{Z}\to\mathbf{C}$ be $1$-bounded such that $\on{supp}(f)\subseteq [\pm N]$, and $N\ge \delta^{-O_k(1)}$. If 
\[\snorm{f}_{U_{L\cdot [\delta N/L]}^{k}}^{2^{k}}\ge \delta N,\]
then
\[\snorm{f}_{U_{[\delta N]}^{k}}^{2^{k}}\gg \delta^{O_k(1)} N.\]
\end{corollary}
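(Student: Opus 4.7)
The plan is to reduce to the case $k=2$ by expressing the $U^k$-box-norm as an average of $U^2$-box-norms of differenced functions: for any set $Q\subseteq\mathbf{Z}$,
\[\snorm{f}_{U^k_Q}^{2^k}=\mathbf{E}_{\substack{h_i,h_i'\in Q\\1\le i\le k-2}}\snorm{\Delta'_{(h_i,h_i')_{i=1}^{k-2}}f}_{U^2_Q}^4,\]
understood as $\snorm{f}_{U^2_Q}^4$ when $k=2$; this follows since the operators $\Delta'_{(h,h')}$ commute. Applied with $Q=L\cdot[\delta N/L]$ to the hypothesis, Markov's inequality shows that for a $\gg\delta$-fraction of outer shifts $(h_i,h_i')\in(L\cdot[\delta N/L])^{2(k-2)}$, the $1$-bounded function $g:=\Delta'_{(h_i,h_i')_{i=1}^{k-2}}f$, supported on $[\pm C_kN]$ for an absolute constant $C_k$, satisfies $\snorm{g}_{U^2_{L\cdot[\delta N/L]}}^4\gg\delta^{O_k(1)}N$.

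The central step is to deduce from such a bound that $\snorm{g}_{U^2_{[\delta N]}}^4\gg\delta^{O_k(1)}N$. Substituting $x=Ly+r$ in the definition of the box-norm yields the residue decomposition
\[\snorm{g}_{U^2_{L\cdot[\delta N/L]}}^4=\sum_{r=0}^{L-1}\snorm{g_r}_{U^2_{[\delta N/L]}}^4,\qquad g_r(y):=g(Ly+r),\]
so by Markov some $r$ satisfies $\snorm{g_r}_{U^2_{[\delta N/L]}}^4\gg\delta^{O_k(1)}N/L$. Since $g_r$ is $1$-bounded and supported on an interval of length $O(N/L)$, \cref{lem:U2-inver} produces $\beta\in\mathbf{T}$ with $|\wh{g_r}(\beta)|\gg\delta^{O_k(1)}N/L$; unwinding the definition of $g_r$, $g$ correlates with the linear phase $e(-\beta x/L)$ to magnitude $\gg\delta^{O_k(1)}N/L$ on the arithmetic progression $\{r+Lk:k\in\mathbf{Z}\}\cap[\pm C_kN]$, which has density $1/L\ge\delta$ inside $[\pm C_kN]$. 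Applying \cref{lem:interval-corr} yields $\sup_{\beta'\in\mathbf{T}}|\wh{g}(\beta')|\gg\delta^{O_k(1)}N$, and \cref{lem:converse} (with its parameter $N$ replaced by $\delta N$ and support radius absorbed into the polynomial-in-$\delta$ factors) then produces the desired $\snorm{g}_{U^2_{[\delta N]}}^4\gg\delta^{O_k(1)}N$.

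To close, we expand
\[\snorm{f}_{U^k_{[\delta N]}}^{2^k}=\mathbf{E}_{\substack{h_i,h_i'\in[\delta N]\\1\le i\le k-2}}\snorm{\Delta'_{(h_i,h_i')_{i=1}^{k-2}}f}_{U^2_{[\delta N]}}^4\]
and retain only the good outer shifts from the previous step. These form a $\gg\delta^{O_k(1)}$-fraction of $(L\cdot[\delta N/L])^{2(k-2)}$, which in turn has relative density $\gg L^{-2(k-2)}\gg\delta^{O_k(1)}$ inside $([\delta N])^{2(k-2)}$ (using $L\le\delta^{-1}$). Each good shift contributes $\gg\delta^{O_k(1)}N$; all other $U^2$-box-norms are non-negative; summing, the total is $\gg\delta^{O_k(1)}N$. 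The main obstacle will be bookkeeping the polynomial-in-$\delta$ losses along the chain \cref{lem:U2-inver}$\to$\cref{lem:interval-corr}$\to$\cref{lem:converse}, together with the passage to an AP, but this is routine since only a bounded-in-$k$ number of lemma invocations is required.
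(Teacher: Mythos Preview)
Your proof is correct and follows essentially the same approach as the paper: expand the $U^k$-box-norm as an average of $U^2$-norms of differenced functions, reduce the $U^2$ case to a Fourier correlation via \cref{lem:U2-inver}, upgrade an AP correlation to a full one via \cref{lem:interval-corr}, and convert back to a $U^2$-bound via \cref{lem:converse}. The paper's proof is only a one-line sketch listing the same lemmas (it also mentions \cref{lem:rescale-down}, which you do not need; your residue decomposition $\snorm{g}_{U^2_{L\cdot[\delta N/L]}}^4=\sum_r\snorm{g_r}_{U^2_{[\delta N/L]}}^4$ handles this directly), and your final reassembly via positivity and the density bound $|L\cdot[\delta N/L]|/|[\delta N]|\gg 1/L\ge\delta$ is exactly the right closing step.
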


\end{document}